\begin{document}
\title{Endomorphism Rings of Supersingular Elliptic Curves and Ternary Quadratic Forms}
\titlerunning{Supersingular Elliptic Curves and Quadratic Forms}
%
\author{Guanju Xiao\inst{1} \and
Zijian Zhou\inst{2} \and
Longjiang Qu\inst{2}\textsuperscript{(\Letter)}}
\authorrunning{G. Xiao, Z. Zhou and L. Qu}
\institute{ Hubei Provincial Engineering Research Center of Intelligent Connected Vehicle Network Security, School of Cyber Science and Technology, Hubei University, Wuhan 430062, China\\
 \email{gjxiao@amss.ac.cn} \and
College of Science, National University of Defense Technology, Changsha 410073, China
\\
\email{zhouzijian122006@163.com;ljqu\_happy@hotmail.com}}
\maketitle              
\begin{abstract}

Let $c<3p/16$ be a prime or $c=1$. Let $E$ be a $\mathbb{Z}[\sqrt{-cp}]$-oriented supersingular elliptic curve defined over $\mathbb{F}_{p^2}$. There exists a $c$-isogeny from $E$ to $E^p$ with kernel $G \subset E[c]$. Given an Eichler order corresponding to the endomorphism ring $\text{End}(E,G)=\{ \theta \in \text{End}(E): \theta(G) \subseteq G \}$, we can compute a ternary quadratic form with discriminant $p$ by solving two square roots in $\mathbb{F}_c$, and the ternary quadratic form corresponds to a maximal order $\mathcal{O} \cong \text{End}(E)$ in $B_{p,\infty}$ by Brandt--Sohn correspondence.
Let $D$ be a prime with $D<p$ (resp. $4D<p$). If an imaginary quadratic order with discriminant $-D$ (resp. $-4D$) can be embedded into $\text{End}(E)$, then we can compute a maximal order in $B_{p,\infty}$ corresponding to $\text{End}(E)$ by solving one square root in $\mathbb{F}_D$ and two square roots in $\mathbb{F}_c$.

As we know, any isogeny between supersingular elliptic curves can be translated into a kernel ideal of the endomorphism ring. We study the action of the kernel ideal and give a basis of its right order. In general, we propose an efficient algorithm for computing a maximal order from an Eichler order in $B_{p,\infty}$.

\keywords{Endomorphism Ring \and Supersingular Elliptic Curve \and Orientation \and Quadratic Form.\\
\textbf{Mathematics Subject Classification:} 14H52, 14K22, 11R52}
\end{abstract}
\section{Introduction}\label{sec1}
Given a supersingular elliptic curve, the supersingular endomorphism ring problem is to compute all of its endomorphisms, which means to compute a basis of the endomorphism ring. The supersingular endomorphism ring problem is not only a challenging problem in computational number theory, but also plays a key role in isogeny-based cryptography. In isogeny-based cryptography, one typically uses supersingular elliptic curves and isogenies between them to construct cryptographic schemes. Assuming the Generalised Riemann Hypothesis, the supersingular endomorphism ring problem is equivalent to the $\ell$-isogeny path problem (see \cite{MR3794837} or \cite{9719728}).

A key exchange protocol called SIDH, based on the path finding problem in the supersingular isogeny graph, was proposed in \cite{MR2931459}. However, SIDH reveals information about the images of certain torsion points.
In 2023, SIDH has been broken by using the torsion point information (see \cite{MR4591003,MR4591004,MR4591005}). Until now, there do not exist efficient algorithms to solve either the path finding problem or the supersingular endomorphism ring problem. These problems are closely related to other cryptographic protocols such as CSIDH \cite{MR3897883} and SQISign \cite{SQISign}.

In 1941, Deuring \cite{MR5125} proved that there is a one-to-one correspondence between type classes of maximal orders in $B_{p,\infty}$ and isomorphism classes of supersingular elliptic curves up to the action of $\text{Gal}(\mathbb{F}_{p^2}/\mathbb{F}_p)$, where $B_{p,\infty}$ is the unique quaternion algebra over $\mathbb{Q}$ ramified only at $p$ and $\infty$. Computing the endomorphism ring of a supersingular elliptic curve was first studied by Kohel \cite{MR2695524}, who gave a probabilistic algorithm for generating a subring of finite index of the endomorphism ring in time $O(p^{1+\varepsilon})$. Following this way, Eisentr\"{a}ger et al. \cite{Eisentrager2020} gave an algorithm for computing the endomorphism ring of a supersingular elliptic curve defined over $\mathbb{F}_{p^2}$ that runs, under certain heuristics, in time $O((\log p)^2 p^{1/2})$. Moreover, Page and Wesolowski \cite{eprint2023/1399} proposed an unconditional probabilistic algorithm to compute the endomorphism ring in time $\tilde{O}(p^{1/2})$.

Brandt and Sohn \cite{MR0017775,Sohn} proved that there exists an explicit bijection between equivalent classes of ternary quadratic forms of discriminant $p$ and type classes of maximal orders in $B_{p,\infty}$. This bijection is also called Brandt--Sohn correspondence. By this correspondence, Cervi\~{n}o \cite{cervino2004} proposed a deterministic and explicit algorithm to compute endomorphism rings of supersingular elliptic curves. In this paper, we further study relations between orientations of supersingular elliptic curves and coefficients of ternary quadratic forms in this correspondence.

Let $E$ be a $\mathbb{Z}[\sqrt{-cp}]$-oriented supersingular elliptic curve. If $c=1$, then the curve $E$ is defined over $\mathbb{F}_p$. Moreover, there exists a one-to-one correspondence between $\mathbb{F}_p$-isomorphism classes of supersingular elliptic curves and primitive reduced binary quadratic forms with discriminant $-p$ or $-16p$ \cite{xiao2022endomorphism}. If $c< 3p/16$ is a prime, then there exists a $c$-isogeny $\phi:E\to E^p$ with $\ker(\phi)=G \subseteq E[c]$. Define $\text{End}(E,G):=\{\theta \in \operatorname{End}(E): \theta(G) \subseteq G\}\subseteq \text{End}(E)$, and the authors \cite{xiao2023oriented} proved that $\text{End}(E,G)$ is isomorphic to an Eichler order $\mathcal{O}_c(q,r)$ or $\mathcal{O}'_c(q,r')$. Moreover, any Eichler order $\mathcal{O}_c(q,r)$ (resp. $\mathcal{O}'_c(q,r')$) can represent a binary quadratic form with discriminant $-16cp$ (resp. $-cp$).

%
In this paper, for any binary quadratic form with discriminant $-16cp$, we show that it can be represented by a unique reduced ternary quadratic form by solving two square roots in $\mathbb{F}_c$. Moreover, we prove that the reciprocal of this ternary quadratic form (definition in Section 3) has discriminant $p$, which corresponds to a maximal order in $B_{p,\infty}$ by Brandt--Sohn correspondence.

Let $D$ be a prime with $D<p$ (resp. $4D<p$). If an imaginary quadratic order $O$ with discriminant $-D$ (resp. $-4D$) can be embedded into $\text{End}(E)$, then we can compute a unique binary quadratic form with discriminant $-16cp$ or $-cp$ by solving one square root in $\mathbb{F}_D$. Moreover, we can compute a ternary quadratic form with discriminant $p$ by solving two square roots in $\mathbb{F}_c$, and the ternary quadratic form corresponds to $\text{End}(E)$ by Brandt--Sohn correspondence.


We also study isogenies between supersingular elliptic curves. Let $\ell \neq p$ be a prime. We divide into two cases.
\begin{itemize}
  \item Let $\varphi: E \to E'$ be a $\mathbb{Z}[\sqrt{-cp}]$-oriented $\ell$-isogeny. This isogeny can be represented by a binary quadratic form with discriminant $-16cp$. Given the endomorphism ring $\text{End}(E,G)$,  we can compute $\text{End}(E', \varphi(G))$ by \cite[Theorems 5 and 6]{xiao2023oriented}. Moreover, we can compute a ternary quadratic form with discriminant $p$ which corresponds to $\text{End}(E')$.
  \item If the $\ell$-isogeny $\varphi: E \to E'$ is not $\mathbb{Z}[\sqrt{-cp}]$-oriented, then we give the kernel ideal of $\varphi$  in Section 5.2. We also represent the right order of this kernel ideal by a ternary quadratic form, and give the endomorphism ring of $E'$. Moreover, we propose an efficient algorithm for computing maximal orders from Eichler orders in $B_{p,\infty}$.
\end{itemize}

There are many algorithms for solving square roots in finite fields, and the one proposed by Tonelli and Shanks is a quite efficient algorithm with a polynomial time under the Generalized Riemann Hypothesis \cite[\S 1.5]{MR1228206}. Solving square roots in $\mathbb{F}_D$ or $\mathbb{F}_c$ is in polynomial time of $O(\log p)$ since $c <p$ and $D<p$ in this paper.

\textbf{This paper is organized as follows.} In Section 2, we review some preliminaries on endomorphism rings of supersingular elliptic curves and ternary quadratic forms. We study the representation of binary quadratic forms by ternary quadratic forms in Section 3. We compute the endomorphism ring of any oriented supersingular elliptic curve by Brandt-Sohn correspondence in Section 4. In Section 5, we study the action of $\ell$-isogenies and compute endomorphism rings of isogenous curves. Finally, we make a conclusion in Section 6.

\section{Preliminaries}\label{sec2}
In this section, we introduce some preliminaries on supersingular elliptic curves over finite fields and their endomorphism rings, quaternion orders and ternary quadratic forms.
\subsection{Elliptic curves over finite fields}
We present some basic facts about elliptic curves over finite fields, and the readers can refer to \cite{MR2514094} for more details.

Let $\mathbb{F}_{p^k}$ be a finite field with characteristic $p>3$. An elliptic curve $E$ defined over $\mathbb{F}_{p^k}$ can be written as a Weierstrass model $E:Y^2=X^3+aX+b$ where $a,b \in \mathbb{F}_{p^k}$ with $4a^3+27b^2 \neq 0$. The $j$-invariant of $E$ is $j(E)=1728\cdot 4a^3/(4a^3+27b^2)$. Different elliptic curves with the same $j$-invariant are isomorphic over $\overline{\mathbb{F}}_{p^k}$. The chord-and-tangent addition law makes of $E(\mathbb{F}_{p^k})=\{(x,y)\in \mathbb{F}_{p^k} ^2:y^2=x^3+ax+b\} \cup \left\{\infty \right\}$ an abelian group.

Let $E_1$ and $E_2$ be elliptic curves defined over $\mathbb{F}_{p^k}$. An isogeny from $E_1$ to $E_2$ is a morphism $\phi:E_1 \rightarrow E_2$ satisfying $\phi(\infty)=\infty$. We always assume $\phi \neq 0$. The isogeny $\phi$ is a surjective group homomorphism with finite kernel, and it is called a $\mathbb{F}_{p^k}$-isogeny if it is defined over $\mathbb{F}_{p^k}$. The degree of an isogeny $\phi$, written as $\deg(\phi)$, is its degree as a rational map. If $\phi$ is separable, we have $\deg(\phi)=\# \ker(\phi)$. Note that all isogenies of prime degree $\ell\neq p$ are separable.

An endomorphism of $E$ is an isogeny from $E$ to itself. The Frobenius map $\pi :(x,y) \mapsto (x^{p^k},y^{p^k})$ is an inseparable endomorphism. The characteristic polynomial of $\pi$ is $x^2-tx+p^k$, where $t$ is the trace of $\pi$. It is well known that $E$ is supersingular (resp. ordinary) if $p\mid t$ (resp. $p \nmid t$). Moreover, the $j$-invariant of every supersingular elliptic curve over $\overline{\mathbb{F}}_p$ is proved to be in $\mathbb{F}_{p^2}$ and it is called a supersingular $j$-invariant.

The set $\text{End}(E)$ of all endomorphisms of $E$ (along with the zero map) over $\overline{\mathbb{F}}_{p^k}$ form a ring under the usual addition and composition as multiplication. The set $\text{End}_{\mathbb{F}_{p^k}}(E)$ is a subring of $\text{End}(E)$ which contains all the endomorphisms over $\mathbb{F}_{p^k}$.

\subsection{Endomorphism ring and quaternion algebra}
For a supersingular elliptic curve $E$ over $\mathbb{F}_{p^2}$, Deuring \cite{MR5125} has proved that the endomorphism ring $\text{End}(E)$ is isomorphic to a maximal order in $B_{p,\infty}$, where $B_{p,\infty}$ is the unique quaternion algebra over $\mathbb{Q}$ ramified only at $p$ and $\infty$.

We present some basic facts about quaternion algebras, and the readers can refer to \cite{MR4279905} for more details. Let $\bar{}:B_{p,\infty} \to B_{p,\infty}$ be the standard involution. For any $\alpha \in B_{p,\infty}$, we define the reduced trace $\text{trd}(\alpha)=\alpha+\bar{\alpha}$ and reduced norm $\text{nrd}(\alpha)=\alpha \bar{\alpha}$ (see \cite[Section 3.1]{MR4279905}).

An order $\mathcal{O}$ of $B_{p,\infty}$ is a subring of $B_{p,\infty}$ which is also a $\mathbb{Z}$-lattice of rank 4. Given a basis $\langle \alpha_1,\alpha_2,\alpha_3,\alpha_4 \rangle$, the discriminant of $\mathcal{O}$ is defined as $\text{disc}(\mathcal{O})=\text{det}(\text{trd}(\alpha_i \bar{\alpha}_j))_{i,j \in \{1,2,3,4\}}$.
We have that the discriminant of an order is an integer and is independent of the choice of basis.

Two orders $\mathcal{O}_1$ and $\mathcal{O}_2$ are in the same type if and only if there exists an element $\alpha \in B_{p,\infty}^{\times}$ such that $\mathcal{O}_1=\alpha^{-1} \mathcal{O}_2 \alpha$. An order is called maximal if it is not properly contained in any other order. The discriminant of a maximal order in $B_{p,\infty}$ is $p^2$.

An Eichler order in a quaternion algebra is the intersection of two (not necessarily distinct) maximal orders. The level of an Eichler order in $B_{p,\infty}$ is given by its index in one of the maximal orders whose intersection defines the order (the index will be the same for either order). Notice that an Eichler order of level $1$ is a maximal order.

Let $\mathcal{O}$ be an Eichler order and $I$ an invertible left ideal of $\mathcal{O}$.
Define the left order $\mathcal{O}_L(I)$ and the right order $\mathcal{O}_R(I)$ of $I$ by
$$\mathcal{O}_L(I)=\left\{x\in B_{p,\infty}:xI\subseteq I \right \}, \quad \mathcal{O}_R(I)=\left\{x\in B_{p,\infty}:Ix \subseteq I\right \}  .$$
Moreover, $\mathcal{O}_L(I)=\mathcal{O}$, and $\mathcal{O}_R(I)=\mathcal{O}^{'}$ is also an Eichler order of the same level, in which case we say that $I$ connects $\mathcal{O}$ and $\mathcal{O}^{'}$. The inverse of an ideal is given by $I^{-1}=\{ a \in B_{p,\infty} : IaI \subseteq I\}$. In particular, we have $II^{-1}=\mathcal{O}_{L}(I)$ and $I^{-1}I=\mathcal{O}_R(I)$. The reduced norm of $I$ can be defined as $$\text{nrd}(I)=\text{gcd}(\{\text{nrd}(\alpha) : \alpha \in I \}).$$

Deuring \cite{MR5125} gave an equivalence of categories between supersingular $j$-invariants and maximal orders in the quaternion algebra $B_{p,\infty}$. Furthermore, if $E$ is a supersingular elliptic curve with $\text{End}(E)\cong \mathcal{O}$, then there is a one-to-one correspondence between isogenies $\phi:E\to E'$ and left $\mathcal{O}$-ideals $I$. More details on the correspondence can be found in \cite[Chapter 42]{MR4279905}.

If the supersingular elliptic curve $E$ is defined over $\mathbb{F}_p$, then $\text{End}(E)$ contains an element $\alpha$ with minimal polynomial $x^2+p$ (see \cite{MR3451433}). Equivalently, the endomorphism ring $\text{End}(E)$ contains a subring $\mathbb{Z}[\sqrt{-p}]$. Ibukiyama \cite{MR683249} has given an explicit description of all maximal orders $\mathcal{O}$ in $B_{p, \infty}$ containing a root of $x^2+p=0$.

\subsection{Quaternion orders and ternary quadratic forms}
Brandt \cite{MR0017775} constructed maximal orders in the quaternion algebra from ternary lattices via even Clifford algebras. The idea was then exploited by Sohn \cite{Sohn} in his thesis where he proved the following proposition.
\begin{proposition}(\cite[Satz 5.1]{Sohn})\label{BS1}
There exists a bijection between the classes of ternary quadratic forms of discriminant $p$ and the type classes of maximal orders in the quaternion algebra $B_{p,\infty}$.
\end{proposition}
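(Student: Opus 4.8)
The plan is to realize both sides of the claimed bijection through the even Clifford algebra functor together with its inverse, and then to match invariants. First I would recall the construction underlying Brandt's work: to a ternary quadratic lattice $(L,Q)$ with $L\cong\mathbb{Z}^3$ one associates its even Clifford algebra $C_0(L,Q)$, the even-degree part of the Clifford algebra $C(L,Q)=T(L)/(x\otimes x-Q(x))$. Fixing a basis $e_1,e_2,e_3$ of $L$, a direct computation shows that $C_0(L,Q)$ is spanned over $\mathbb{Z}$ by $1,\,e_1e_2,\,e_1e_3,\,e_2e_3$, so it is a $\mathbb{Z}$-lattice of rank $4$; closure under multiplication follows from the Clifford relations $e_ie_j+e_je_i=B(e_i,e_j)$, where $B(x,y)=Q(x+y)-Q(x)-Q(y)$, and this exhibits $C_0(L,Q)$ as an order in the quaternion algebra $C_0(L,Q)\otimes_{\mathbb{Z}}\mathbb{Q}$. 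The first substantive task is the discriminant bookkeeping: with Sohn's normalization of $Q$, one proves that the reduced discriminant of $C_0(L,Q)$ equals $\text{disc}(Q)$, so that the discriminant-$p$ forms land among the orders of reduced discriminant $p$, i.e. of discriminant $p^2$, matching the value recorded earlier for maximal orders.

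Second, I would construct the inverse arrow. Given a quaternion order $\mathcal{O}\subset B_{p,\infty}$, one takes the rank-$3$ lattice of trace-zero elements $\mathcal{O}^0=\{\alpha\in\mathcal{O}:\text{trd}(\alpha)=0\}$ (or, more functorially, the natural rank-$3$ module carrying the ternary norm form) and equips it with the restriction of $\text{nrd}$. This produces a ternary quadratic lattice, and one checks that the assignments $Q\mapsto C_0(L,Q)$ and $\mathcal{O}\mapsto(\mathcal{O}^0,\text{nrd}|_{\mathcal{O}^0})$ are mutually inverse up to the natural equivalence on each side. The cleanest way to organize this is to prove that $C_0$ is an equivalence of categories between ternary quadratic lattices with similarities and quaternion orders with isomorphisms, which is exactly the content encoded in Brandt's construction; the modern reference is Voight \cite{MR4279905}. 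The bijection on similarity/type classes then follows formally from the equivalence of categories by passing to isomorphism classes.

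Third, I would pin down which forms and which orders actually occur. Maximality of an order is a purely local condition at each prime, and $C_0(L,Q)$ is maximal at $\ell$ precisely when $Q$ is $\mathbb{Z}_\ell$-maximal there; combined with the discriminant computation, this shows that maximal orders correspond exactly to forms of discriminant $p$. To force the ambient algebra to be $B_{p,\infty}$ rather than another quaternion algebra, I would invoke the local-global dictionary for the even Clifford algebra: $C_0(L,Q)\otimes\mathbb{Q}$ splits at a place $v$ if and only if $Q$ is isotropic over $\mathbb{Q}_v$. Requiring ramification exactly at $p$ and $\infty$ then forces $Q$ to be definite (anisotropy over $\mathbb{R}$ gives ramification at $\infty$) with the correct invariant at $p$, which is automatic once the discriminant is the prime $p$ and the form is definite. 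Hence the definite discriminant-$p$ ternary forms are precisely those whose even Clifford algebra is a maximal order of $B_{p,\infty}$.

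The main obstacle I expect is the simultaneous matching of the equivalence relations and the discriminant normalization. One must verify that two ternary forms lie in the same class (the appropriate $\mathrm{GL}_3(\mathbb{Z})$- or $\mathrm{SL}_3(\mathbb{Z})$-equivalence, with the similarity factor tracked) if and only if their even Clifford algebras are conjugate in $B_{p,\infty}^{\times}$, i.e. of the same type, and at the same time that the chosen normalization of $\text{disc}(Q)$ makes the numerics come out to exactly $p$ rather than $p$ times a unit or a power of $2$. The definiteness and ramification-at-$\infty$ condition, together with the precise handling of the factor of $2$ in the bilinear form (the even-versus-odd Clifford convention), are where parity and sign errors creep in, so the careful work concentrates locally at $2$ and at $p$; away from these primes the correspondence is the generic equivalence of categories and is essentially formal.
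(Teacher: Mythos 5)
Your overall route is the standard even Clifford algebra argument, which is the same machinery this paper recalls in Section 2.3 (the paper itself does not reprove the proposition; it cites Sohn's Satz 5.1 and follows Voight, Chapter 22, for the explicit correspondence). However, there is a genuine error in your second step, the construction of the inverse map. You propose $\mathcal{O}\mapsto(\mathcal{O}^0,\operatorname{nrd}|_{\mathcal{O}^0})$, the trace-zero elements of the order itself with the reduced norm. That assignment is \emph{not} inverse to the even Clifford construction, and the discriminant bookkeeping you rely on fails for it. Concretely, take $p\equiv 3\pmod 4$ and the maximal order $\mathcal{O}=\mathbb{Z}\oplus\mathbb{Z}i\oplus\mathbb{Z}\tfrac{1+j}{2}\oplus\mathbb{Z}\tfrac{i+k}{2}$ in $B_{p,\infty}=\left(\frac{-1,-p}{\mathbb{Q}}\right)$. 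Then $\mathcal{O}^0=\mathbb{Z}i\oplus\mathbb{Z}j\oplus\mathbb{Z}\tfrac{i+k}{2}$, and $\operatorname{nrd}$ restricted to it is $x^2+py^2+\tfrac{1+p}{4}z^2+xz$, whose Gram matrix has determinant $2p^2$: the resulting ternary form has discriminant on the order of $p^2$, not $p$ (and it even has a half-integral cross coefficient in Dickson's normalization). So with your inverse, maximal orders of $B_{p,\infty}$ do not land in the discriminant-$p$ forms at all, and the composite $Q\mapsto C_0(Q)\mapsto$ (trace-zero form) does not return the class of $Q$; the "one checks the maps are mutually inverse" step is precisely the step that breaks.

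The correct inverse, which is what the paper sets up in Section 2.3, uses the \emph{dual} lattice: one takes $(\mathcal{O}^{\#})^0=\{\alpha\in\mathcal{O}^{\#}:\operatorname{trd}(\alpha)=0\}$, where $\mathcal{O}^{\#}=\{\alpha\in B_{p,\infty}:\operatorname{trd}(\alpha\mathcal{O})\subseteq\mathbb{Z}\}$, equipped with the rescaled form $N\operatorname{nrd}^{\#}$, $N$ being the reduced discriminant of $\mathcal{O}$; equivalently, in a basis, $Ni^{\#}=jk-kj$, etc. This is the form that has discriminant exactly $N$, so that maximal orders (reduced discriminant $p$) correspond to discriminant-$p$ forms. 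Your parenthetical hedge ("or, more functorially, the natural rank-3 module carrying the ternary norm form") gestures at this but does not supply it, and nothing else in your argument repairs the mismatch. A secondary, milder point: the categorical equivalence you invoke holds between ternary quadratic modules up to (twisted) similarity and \emph{Gorenstein} orders up to isomorphism; for the restriction to maximal orders this is harmless since maximal orders are Gorenstein, but the blanket claim that the two assignments invert each other on all orders is false without that hypothesis. The remaining parts of your outline -- the discriminant computation for $C_0(Q)$, the local maximality criterion, and the dictionary between isotropy of $Q$ over $\mathbb{Q}_v$ and splitting of the algebra (so that definiteness corresponds to ramification at $\infty$ and discriminant $p$ forces $B_{p,\infty}$) -- are sound.
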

This bijection is called Brandt--Sohn correspondence. We introduce the explicit bijection between ternary quadratic forms and quaternion orders. The reader can refer to \cite[Chapter 22]{MR4279905} for more details.

Let $Q(x,y,z)=2ax^2+2by^2+2cz^2+2uyz+2vxz+2wxy\in \mathbb{Z}[x,y,z]$ be a ternary quadratic form. It can be represented by the following matrix:
$$A_Q=
 \begin{bmatrix}
   2a & w & v \\
   w & 2b & u \\
   v & u & 2c
 \end{bmatrix}
 .$$
The discriminant of $Q$ is $\text{disc}(Q)=\det (A_Q) /2=4abc+uvw-au^2-bv^2-cw^2=N$. We define the discriminant of a ternary quadratic form just as \cite{Dickson1930}, and this is different in \cite[Chapter 22]{MR4279905}. We associate the quaternion $\mathbb{Z}$-order $\mathcal{O} \subseteq B_{p, \infty}$ with basis $1$, $i$, $j$, $k$ and multiplication laws
\begin{align}\label{equation1}
 \begin{array}{ll}
  i^2=ui-bc \quad \quad & jk=a\bar{i}=a(u-i) \vspace{0.5ex}  \\
  j^2=vj-ac \quad \quad & ki=b\bar{j}=b(v-j) \vspace{0.5ex}  \\
  k^2=wk-ab \quad \quad & ij=c\bar{k}=c(w-k).
 \end{array}
\end{align}
The other multiplication rules are determined by the skew commutativity relations coming from the standard involution.

The order $\mathcal{O}=\mathbb{Z}+\mathbb{Z}i+\mathbb{Z}j+\mathbb{Z}k$ defined by (\ref{equation1}) is called the even Clifford algebra $\text{Clf}^0(Q)$ of $Q$, and the discriminant of $\mathcal{O}$ is $\text{disc}(\mathcal{O})=N^2$. At least one of the minors
$$u^2-4bc, \quad v^2-4ac, \quad w^2-4ab$$
is nonzero since $Q$ is nondegenerate, so for example if $w^2-4ab \neq 0$, completing the square we find
$$\mathcal{O} \subset B_{p,\infty} \cong \left( \frac{w^2-4ab,-aN}{\mathbb{Q}} \right).$$

We define an inverse to the even Clifford algebra construction. Let $\mathcal{O} \subset B_{p,\infty}$ be a quaternion order with discriminant $\text{disc}(\mathcal{O})=N^2$. As we know, the order $\mathcal{O}$ is a lattice in $B_{p, \infty}$. Let $\{ 1,i,j,k\}$ be a $\mathbb{Z}$-basis of $\mathcal{O}$. We define the dual of $\mathcal{O}$ (with respect to trd) to be $\mathcal{O}^{\#}:=\{ \alpha \in B_{p,\infty}: \text{trd}(\alpha \mathcal{O}) \subseteq \mathbb{Z} \}=\{ \alpha \in B_{p,\infty}: \text{trd}(\mathcal{O} \alpha) \subseteq \mathbb{Z} \}$. Let $\{ 1^{\#}, i^{\#}, j^{\#}, k^{\#} \}$ be the dual basis. We have $\text{trd}(1 \cdot i^{\#})=\text{trd}(1 \cdot j^{\#})=\text{trd}(1 \cdot k^{\#})=0$ (see \cite[15.6.3]{MR4279905}). Let
$$(\mathcal{O}^{\#})^0=\{ \alpha \in \mathcal{O}^{\#}: \text{trd}(\alpha)=0\}=\mathbb{Z} i^{\#}+\mathbb{Z} j^{\#}+\mathbb{Z} k^{\#}$$
be the trace zero elements in the dual of $\mathcal{O}$ with respect to the reduced trace pairing.
We have
$$N i^{\#}=jk-kj, \quad N j^{\#}=ki-ik, \quad N k^{\#}=ij-ji.$$
Then the following map
$$\begin{array}{rcl}
  N\text{nrd}^{\#}(\mathcal{O}):(\mathcal{O}^{\#})^0 & \to     & \mathbb{Z} \\
                         \alpha & \mapsto & N \cdot \text{nrd}(\alpha)
\end{array} $$
defines a candidate quadratic form
$$N\text{nrd}^{\#}(x,y,z):=N\text{nrd}^{\#}(xi^{\#}+yj^{\#}+zk^{\#})=\text{nrd}(x(jk-kj)+y(ki-ik)+z(ij-ji)).$$
One can verify that $N\text{nrd}^{\#}(x,y,z)$ has discriminant $N$.

%
%
%
%

\section{Representation of binary quadratic forms by ternary quadratic forms}

We first present some basic facts about ternary quadratic forms (see \cite[Chapter 1]{Dickson1930}), and then study the representation of binary quadratic forms by ternary quadratic forms.

Let $f$ be a ternary quadratic form with integer coefficients, given by the equation
$$f(x,y,z)=a_{11}x^2+a_{22}y^2+a_{33}z^2+2a_{23}yz+2a_{13}xz+2a_{12}xy.$$

Define the matrix of $f$ to be
 $$A=A_f=
 \begin{bmatrix}
   a_{11} & a_{12} & a_{13} \\
   a_{12} & a_{22} & a_{23} \\
   a_{13} & a_{23} & a_{33}
 \end{bmatrix}
 .$$
Define the discriminant of $f$ to be $\text{disc}(f)=\det(A)/2$.

Let $\tau$ denote the gcd (greatest common divisor) of the elements $a_{11}$, $a_{22}$, $a_{33}$, $a_{23}$, $a_{13}$, $a_{12}$ in the matrix $A$. Let $\sigma$ denote the gcd of the coefficients $a_{11}$, $a_{22}$, $a_{33}$, $2a_{23}$, $2a_{13}$, $2a_{12}$. If $\tau=1$, then $f$ is called primitive. If $\sigma=\tau=1$, then $f$ is called properly primitive. If $\tau=1$ and $\sigma=2$, then $f$ is called improperly primitive.

Let $A_{i,j}$ be the $i,j$-cofactor of $A$. That is,
\begin{align*}
  A_{11}=a_{22}a_{33}-a_{23}^2, \quad & A_{23}=a_{13}a_{12}-a_{11}a_{23}=A_{32}, \\
  A_{22}=a_{11}a_{33}-a_{13}^2, \quad & A_{13}=a_{23}a_{12}-a_{13}a_{22}=A_{31}, \\
  A_{33}=a_{11}a_{11}-a_{12}^2, \quad & A_{12}=a_{13}a_{23}-a_{12}a_{33}=A_{21}.
\end{align*}
Define the divisor of $f$ to be the positive integer
$$\Omega=\Omega_f=\gcd(A_{11},A_{22},A_{33},A_{23},A_{13},A_{12}),$$
and $\Omega$ is an invariant of $f$.
We write $F'=\Sigma_{i,j} A_{i,j} X_i X_j=\Omega F$ and call $F$ the reciprocal of $f$. Evidently, $F$ is a primitive form.

Assume that $f$ is primitive. One can check that the reciprocal of $F$ is $f$. Moreover, we have that $\Omega^2$ divides all $\det(A) \cdot a_{i,j}$ and hence divides $\det(A)$. So $\det(A)=\Delta \Omega^2$ define an integer $\Delta$ which is an invariant of $f$.

Two ternary quadratic forms $f$ and $g$ are said to be equivalent, $f \sim g$, if there exists a unimodular matrix $U=(u_{ij})_{3\times 3}$ such that $A_g=UA_f U^{\mathrm{t}}$. (That is, $U$ has integer entries and $\det(U)=\pm 1$; $U^{\textrm{t}}$ is its transpose.) Moreover, we have the following proposition.

\begin{proposition}(\cite[Theorem 3]{Dickson1930})
If two ternary quadratic forms are equivalent, then their reciprocals are equivalent as well.
\end{proposition}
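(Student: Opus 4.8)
The plan is to recast the passage from a form to its reciprocal entirely in matrix language and then exploit the anti-multiplicativity of the adjugate. Writing $\operatorname{cof}(A)=(A_{i,j})_{i,j}$ for the matrix of cofactors, the form $F'=\sum_{i,j}A_{i,j}X_iX_j$ has $\operatorname{cof}(A)$ as its associated matrix; since $A$ is symmetric, $\operatorname{cof}(A)$ coincides with the adjugate $\operatorname{adj}(A)$ (the transpose of the cofactor matrix). By definition $\Omega_f$ divides every entry of $\operatorname{cof}(A)$, so the reciprocal $F=F'/\Omega_f$ is the integral ternary form whose matrix is $\operatorname{adj}(A)/\Omega_f$. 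Thus it suffices to show that the adjugate matrices of two equivalent forms are congruent through a unimodular matrix, and that $\Omega_f=\Omega_g$.

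Suppose $g\sim f$, say $A_g=UA_fU^{t}$ with $U$ unimodular. First I would apply the identity $\operatorname{adj}(XY)=\operatorname{adj}(Y)\operatorname{adj}(X)$ (a polynomial identity valid over $\mathbb{Z}$) twice to obtain
\[
\operatorname{adj}(A_g)=\operatorname{adj}(U^{t})\,\operatorname{adj}(A_f)\,\operatorname{adj}(U).
\]
Set $W:=\operatorname{adj}(U)$. Because $U$ has integer entries, so does $W$, and $\det W=(\det U)^{2}=1$, so $W$ is again unimodular; moreover $\operatorname{adj}(U^{t})=\operatorname{adj}(U)^{t}=W^{t}$. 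Hence
\[
\operatorname{adj}(A_g)=W^{t}\,\operatorname{adj}(A_f)\,W,
\]
which already exhibits $F'_f\sim F'_g$ via the unimodular matrix $W^{t}$.

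It remains to descend from $F'$ to $F$. Since $\Omega$ is an invariant of the form (as recorded above), we have $\Omega_f=\Omega_g$; if one prefers an independent verification, note that for $x,y\in\mathbb{Z}^3$ the bilinear form attached to $W^{t}\operatorname{adj}(A_f)W$ evaluates to the bilinear form of $\operatorname{adj}(A_f)$ at $(Wx,Wy)$, and $W$ permutes $\mathbb{Z}^3$ bijectively, so the gcd of the entries is unchanged. Dividing the last displayed identity by $\Omega_f=\Omega_g$ shows that the matrix of $F_g$ equals $W^{t}$ times the matrix of $F_f$ times $W$, i.e. $F_f\sim F_g$. The only delicate points are bookkeeping ones — getting the order of factors right in the anti-multiplicative law and checking that $\operatorname{adj}(U)$ remains unimodular over $\mathbb{Z}$ — while the structural input, invariance of the divisor $\Omega$, is already available.
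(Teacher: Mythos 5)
Your proof is correct: the anti-multiplicativity of the adjugate, the fact that $\operatorname{adj}(U)$ is again integral and unimodular for $3\times 3$ unimodular $U$, and the gcd-invariance argument for $\Omega_f=\Omega_g$ together give a complete and self-contained argument (the independent verification of $\Omega_f=\Omega_g$ also avoids any circularity with the paper's earlier assertion that $\Omega$ is an invariant). The paper itself offers no proof — it cites Dickson's Theorem 3 — and Dickson's classical proof is exactly this computation phrased in the language of "adjoint substitutions," so your route coincides with the standard one.
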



The ternary quadratic form $f$ is called positive definite if $f(x,y,z)>0$ for real numbers $x,y,z$ unless $x=y=z=0$.
\begin{proposition}(\cite[Theorem 8]{Dickson1930})\label{Dick1}
If $f$ is a positive ternary quadratic form, then $\operatorname{disc}(f)$, $a_{ii}$ and $A_{ii}$ ($i=1,2,3$) are all positive. If $a_{11}$, $A_{33}$ and $\operatorname{disc}(f)$ are positive, then the ternary quadratic form is positive.
\end{proposition}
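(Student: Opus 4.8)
The statement is essentially Sylvester's criterion expressed through the $2\times 2$ principal minors (which are precisely the diagonal cofactors $A_{ii}$) together with the full determinant, so the plan is to split the argument into the necessity direction and the sufficiency direction, handling the former by restriction and the latter by completing the square.

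For the first assertion (necessity), I would argue by restricting $f$ to the coordinate axes and coordinate planes. Evaluating $f$ at the standard basis vectors gives $f(1,0,0)=a_{11}$, $f(0,1,0)=a_{22}$, $f(0,0,1)=a_{33}$, so positivity of $f$ forces $a_{ii}>0$. Setting one variable to zero restricts $f$ to a binary quadratic form on each coordinate plane; for instance $f(x,y,0)=a_{11}x^2+2a_{12}xy+a_{22}y^2$ has determinant $a_{11}a_{22}-a_{12}^2=A_{33}$, and likewise the restrictions $f(x,0,z)$ and $f(0,y,z)$ have determinants $A_{22}$ and $A_{11}$. Since a positive definite form restricts to a positive definite form on any subspace, each of these binary forms is positive definite, hence has strictly positive discriminant; this yields $A_{ii}>0$. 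Finally, $\operatorname{disc}(f)=\det(A)/2$ is positive because a positive definite symmetric matrix has strictly positive eigenvalues, so $\det(A)>0$.

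For the converse (sufficiency), assuming $a_{11}>0$, $A_{33}>0$ and $\operatorname{disc}(f)>0$, I would complete the square in $x$. Writing $L=x+(a_{12}/a_{11})y+(a_{13}/a_{11})z$, a direct computation gives
$$f(x,y,z)=a_{11}L^2+\frac{1}{a_{11}}\bigl(A_{33}\,y^2-2A_{23}\,yz+A_{22}\,z^2\bigr).$$
The discriminant of the binary form in parentheses is $A_{22}A_{33}-A_{23}^2$, and the key identity is the Jacobi minor relation for the adjugate, $A_{22}A_{33}-A_{23}^2=a_{11}\det(A)=2a_{11}\operatorname{disc}(f)$. Under the hypotheses this is strictly positive, and together with $A_{33}>0$ it shows that $A_{33}y^2-2A_{23}yz+A_{22}z^2$ is positive definite. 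Hence for $(x,y,z)\neq(0,0,0)$ the expression is positive: if $(y,z)\neq(0,0)$ the second summand is strictly positive, while if $(y,z)=(0,0)$ and $x\neq0$ the first summand $a_{11}x^2$ is strictly positive. Thus $f$ is positive definite.

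The routine but essential ingredient is the cofactor identity $A_{22}A_{33}-A_{23}^2=a_{11}\det(A)$, which I expect to be the one point needing care; it follows from Jacobi's theorem on the minors of the adjugate matrix (for a symmetric $3\times 3$ matrix the adjugate is again symmetric with entries $A_{ij}$), and it is exactly what converts the hypothesis $\operatorname{disc}(f)>0$ into positivity of the discriminant of the reduced binary form obtained after completing the square.
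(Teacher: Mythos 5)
Your proof is correct. The paper does not prove this proposition itself---it cites it as Theorem 8 of Dickson's book---and your argument (restriction to coordinate axes and planes for necessity; completing the square in $x$ together with the cofactor identity $A_{22}A_{33}-A_{23}^2=a_{11}\det(A)$ for sufficiency) is essentially the classical proof behind that citation, so the two approaches coincide.
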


We begin to discuss the representation of binary quadratic forms by ternary quadratic forms.

\begin{proposition}(\cite[Theorem 36]{Dickson1930})\label{Dick5}
Let $\rho=(a,2t,b)$ be a positive primitive binary quadratic form of discriminant $(2t)^2-4ab=-4\Omega C < 0$ with $\Omega >0$. Let $\Delta >0$ be an odd integer. Assume that the congruences
\begin{align}\label{equation2}
R^2+\Delta a \equiv 0, \quad RS-\Delta t \equiv 0, \quad S^2+\Delta b \equiv 0 \large \pmod C
\end{align}
are solvable. If $C$ and $\Delta$ are relatively prime, then $\rho$ can be represented by a positive properly primitive ternary quadratic form $f$ with invariants $\Omega$ and $\Delta$.

In particular, assume that $\Omega$ is even and $\rho_1$ is any odd integer represented by $\rho$. If $C$ is double an odd prime or $C=2$, and $\Delta \rho_1 \equiv 3 \pmod 4$ (or if $C=4$ and $\Delta \rho_1 \equiv 7 \pmod 8$), then the reciprocal $F$ of $f$ is improperly primitive.
\end{proposition}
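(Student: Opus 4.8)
The plan is to build the \emph{reciprocal} form $F$ first, directly from a solution of the congruences (\ref{equation2}), and only then recover $f$ as a scalar multiple of the adjugate of $F$; the representation of $\rho$ by $f$ then falls out for free, because $\rho$ will sit in the lower-right $2\times 2$ block of the matrix of $f$. Concretely, fix a solution of (\ref{equation2}) and choose integers $s\equiv R$ and $r\equiv S \pmod C$. Since $R^2+\Delta a\equiv RS-\Delta t\equiv S^2+\Delta b\equiv 0\pmod C$, the three numbers
\[
w=\frac{s^2+\Delta a}{C},\qquad q=\frac{rs-\Delta t}{C},\qquad p=\frac{r^2+\Delta b}{C}
\]
are integers, and I set
$$A_F=\begin{bmatrix} C & r & s\\ r & p & q\\ s & q & w\end{bmatrix}.$$
Multiplying $\det(A_F)$ through by $C$ and substituting $Cw=s^2+\Delta a$, $Cp=r^2+\Delta b$, $Cq=rs-\Delta t$, the expression collapses to $C\det(A_F)=\Delta^2(ab-t^2)=\Delta^2\Omega C$, so $\det(A_F)=\Delta^2\Omega$. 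I then \emph{define} $f$ by $A_f=\tfrac1\Delta\operatorname{adj}(A_F)$.

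The main step is to check that $A_f$ is integral. The $(2,2),(2,3),(3,3)$ cofactors of $A_F$ are $Cw-s^2=\Delta a$, $rs-Cq=\Delta t$, $Cp-r^2=\Delta b$, so the lower-right block of $\operatorname{adj}(A_F)$ equals $\Delta A_\rho$; after dividing by $\Delta$ this is the block $A_\rho$ of $A_f$, whence $f(0,y,z)=\rho(y,z)$ and $\rho$ is represented by $f$. The remaining cofactors $pw-q^2$, $sq-rw$, $rq-sp$ must also be divisible by $\Delta$, and this is exactly where $\gcd(C,\Delta)=1$ enters: for each prime power $\ell^e\parallel\Delta$ we have $\ell\nmid C$, so modulo $\ell^e$ the defining relations read $w\equiv C^{-1}s^2$, $p\equiv C^{-1}r^2$, $q\equiv C^{-1}rs$, and substituting makes each of these three cofactors vanish modulo $\ell^e$. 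Hence $\operatorname{adj}(A_F)\equiv 0\pmod\Delta$ and $A_f$ is integral. Then $\operatorname{adj}(A_f)=\Omega A_F$ and $\det(A_f)=\Delta\Omega^2$. If a prime $\ell$ divided every entry of $A_F$ it would divide $C$, and every cofactor of $A_F$, that is every $\Delta\cdot(\text{entry of }A_f)$, would be divisible by $\ell^2$; thus either $\ell\mid\gcd(a,2t,b)$ or $\ell\mid\gcd(C,\Delta)$, both impossible. So $A_F$ is primitive, giving $\Omega_f=\gcd(\text{entries of }\Omega A_F)=\Omega$ and therefore $\Delta_f=\det(A_f)/\Omega^2=\Delta$. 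Since $\gcd(a,2t,b)=1$ already forces $\gcd(a_{11},a,b,2t,2a_{12},2a_{13})=1$, the form $f$ is properly primitive.

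For positivity, $\Omega C=ab-t^2>0$ and $\Omega>0$ give $C>0$; the leading principal minors of $A_F$ are $C>0$, $Cp-r^2=\Delta b>0$ and $\det(A_F)=\Delta^2\Omega>0$, so $A_F$ is positive definite and hence so is $\Delta A_f=\operatorname{adj}(A_F)$. Equivalently, $a_{11}=(pw-q^2)/\Delta>0$, the cofactor $A_{33}=\Omega w>0$ and $\operatorname{disc}(f)=\Delta\Omega^2/2>0$, so $f$ is positive by Proposition \ref{Dick1}. This settles the first assertion, and by construction the reciprocal of $f$ is precisely $F$.

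It remains to treat the ``in particular'' clause. Since $F$ is primitive ($\tau_F=1$), it is improperly primitive exactly when its three diagonal entries $C,p,w$ are all even. Under the stated hypotheses $C$ is even, so only the parities of $p=(r^2+\Delta b)/C$ and $w=(s^2+\Delta a)/C$ remain, and these are a purely $2$-adic matter: using that $\Omega$ is even (so $\operatorname{disc}(\rho)=-4\Omega C\equiv 0\pmod{16}$), that $\rho$ represents the odd integer $\rho_1$, and the hypothesis $\Delta\rho_1\equiv 3\pmod 4$ (respectively $\Delta\rho_1\equiv 7\pmod 8$ when $C=4$), one reduces the defining relations modulo $4$ (respectively modulo $8$) and finds $p\equiv w\equiv 0\pmod 2$. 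Hence $C,p,w$ are all even and $F$ is improperly primitive. The conceptual heart of the argument is the recognition that (\ref{equation2}) are exactly the solvability conditions for the reciprocal matrix $A_F$ with leading coefficient $C$ to exist, and the one genuine obstacle in the first part is the integrality $\operatorname{adj}(A_F)\equiv 0\pmod\Delta$, dissolved by $\gcd(C,\Delta)=1$; the delicate point in the second part is the case-by-case $2$-adic bookkeeping (according to $C\equiv 2\pmod 4$ or $C=4$) that pins down the parities of $p$ and $w$.
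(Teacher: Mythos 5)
Your proof of the first assertion is complete and correct: the determinant identity $C\det(A_F)=\Delta^2(ab-t^2)=\Delta^2\Omega C$, the integrality of $A_f=\tfrac{1}{\Delta}\operatorname{adj}(A_F)$ via inverting $C$ modulo each prime power dividing $\Delta$, the primitivity and invariant computations, and positivity by Sylvester's criterion all check out. But note that this is precisely the part the paper does \emph{not} prove: its proof opens by observing that Dickson established everything except the last sentence, and your construction is in substance Dickson's. Your single relation $\operatorname{adj}(A_F)=\Delta A_f$ is exactly the paper's equations (\ref{equation3}) and (\ref{equation4}) written entrywise (your $w,p,q,s,r$ are the paper's $B,A,T,R,S$ up to the permutation of variables), so the adjugate packaging and the mod-$\ell^e$ argument are a tidy reorganization of the cited construction rather than a new route.

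The genuine gap is in the ``in particular'' clause, which is the only claim the paper's own proof establishes and the proposition's actual addition to \cite[Theorem 36]{Dickson1930}. Your treatment of it is the sentence that reducing the defining relations modulo $4$ (resp.\ $8$) ``finds $p\equiv w\equiv 0\pmod 2$'' --- but that is a restatement of the goal, and the reduction is not routine. Concretely missing: (i) one must first secure an odd outer coefficient (if $a$ and $b$ were both even, $\gcd(a,2t,b)$ would be even), so WLOG $2\nmid b$, which is what makes $\Delta b\equiv 3\pmod 4$ (resp.\ $\Delta b \equiv 7\pmod 8$) available via representability; (ii) from $Cp=r^2+\Delta b$ with $C$ even one deduces $r$ odd, and then $r^2\equiv 1\pmod 8$ gives $Cp\equiv 0\pmod 4$ when $2\parallel C$, but for $C=4$ one needs $Cp\equiv 0\pmod 8$, which is exactly why the hypothesis must be strengthened to $\Delta\rho_1\equiv 7\pmod 8$ in that case --- a point your sketch never engages; (iii) the companion coefficient $w$ is \emph{not} symmetric when $a$ is even: for $C=2$ or twice an odd prime one must show $2\mid t$ and $4\mid a$, whence $2\mid s$ and $Cw=s^2+\Delta a\equiv 0\pmod 4$; for $C=4$ there are three subcases ($2\nmid a$, giving $s$ odd and $Cw\equiv 1+7\pmod 8$; $2\parallel t$, forcing $4\parallel a$ and, via $Cq=rs-\Delta t$, $2\parallel s$; and $4\mid t$, forcing $8\mid a$ and $4\mid s$), each needed to reach $Cw\equiv 0\pmod 8$. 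This parity bookkeeping is the entire content of the paper's proof; as written, your proposal proves in detail the half of the proposition that was already in Dickson and leaves the new half unverified.
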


\begin{proof}
Dickson gave a proof of this theorem but the last sentence. We also follow notations in \cite[\S 21]{Dickson1930}. Assume that congruences (\ref{equation2}) are solvable. We can assume that $A$, $B$ and $T$ satisfy the following equations:
\begin{align}\label{equation3}
BC-R^2=\Delta a, \quad RS-TC=\Delta t, \quad AC-S^2=\Delta b.
\end{align}
Moreover, the integers $r$, $s$ and $c$ satisfy the following equations:
\begin{align}\label{equation4}
ST-AR=\Delta r, \quad RT-BS=\Delta s, \quad AB-T^2=\Delta c.
\end{align}
The binary quadratic form $\rho=(a,2t,b)$ can be represented by a properly primitive ternary quadratic form $f(x_1,x_2.x_3)=ax^2_{1}+bx^2_{2}+cx^2_{3}+2rx_{2}x_3+2sx_{1}x_3+2tx_1x_{2}$.
Because $C$ and $\Delta$ are relatively prime, the reciprocal $F$ of $f$ is  $$F(X_1,X_2,X_3)=AX^2_{1}+BX^2_{2}+CX^2_{3}+2RX_{2}X_3+2SX_{1}X_3+2TX_{1}X_2.$$

In our case, $C$ is even, so $F$ is improperly primitive if and only if $A$ and $B$ are even.

If $C=4=2\times 2$, then $t^2-ab=-4 \Omega \equiv 0 \pmod {8}$. We can assume $2 \nmid b$. We have
$AC-S^2 \equiv \Delta b \equiv -S^2 \pmod 4$, so $2 \nmid S$. Moreover, since $S^2 \equiv 1 \pmod 8$ and $\Delta b \equiv 7 \pmod 8$ ($b$ can be represented by $\rho$), we have $AC \equiv S^2+\Delta b \equiv 0 \pmod 8$. It follows $2 \mid A$.

If $2 \nmid a$ and $2 \nmid t$, we can prove $2 \mid B$.
If $2 \parallel t$, then $4 \parallel a$. We have $RS-TC =\Delta t \equiv 2 \pmod 4$, so $RS \equiv 2 \pmod 4$. It follows $2 \parallel R$. We have $BC=R^2 +\Delta a \equiv R^2 +4 \equiv 0 \pmod 8$. If $4 \mid t$, then $8 \mid a$. We have $RS=TC +\Delta t \equiv 0 \pmod 4$, so $4 \mid R$. It follows $BC =R^2+\Delta a \equiv 0 \pmod 8$. So we have $2 \mid B$.

If $C$ is double an odd prime or $C=2$, then $t^2-ab =-\Omega C \equiv 0 \pmod 4$. We can assume $2 \nmid b$ and $\Delta b \equiv 3 \pmod 4$. We have $AC=S^2+\Delta b \equiv 0 \pmod 2$, so $ 2\nmid S$. It follows that $AC \equiv 0 \pmod 4$, which implies $2 \mid A$. Similarly, we can show $2 \mid B$ if $a$ is odd. If $a$ is even, then $2 \mid t$ and $4 \mid a$. We have $BC=R^2 + \Delta a \equiv 0 \pmod 2$, which implies that $2 \mid R$ and $BC \equiv 0 \pmod 4$, we also have $2 \mid B$.
\end{proof}

\begin{proposition}\label{Dick6}
Let $\rho=(a,2t,b)$ be a positive binary quadratic form with discriminant $(2t)^2-4ab=-4\Omega C < 0$ which satisfies $2 \parallel t$ and $\gcd(a,2t,b)=4$. Assume that $C$ is double an odd prime or $C=2$ and $\Omega >0$ is even. Let $\Delta >0$ be an odd integer. Assume that congruences $(\ref{equation1})$ are solvable. If $C$ and $\Delta$ are relatively prime, then $\rho$ can be represented properly by a positive properly primitive ternary quadratic form $f$ with invariants $\Omega$ and $\Delta$. Moreover, the reciprocal $F$ of $f$ is improperly primitive.
\end{proposition}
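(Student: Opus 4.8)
The plan is to re-use, essentially verbatim, the construction that underlies Proposition~\ref{Dick5}: from a solution of the congruences (\ref{equation2}) modulo $C$ one obtains integers $A,B,T$ satisfying (\ref{equation3}), and since $\gcd(C,\Delta)=1$ the equations (\ref{equation4}) then produce integers $r,s,c$. This yields the ternary form $f=ax_1^2+bx_2^2+cx_3^2+2rx_2x_3+2sx_1x_3+2tx_1x_2$ representing $\rho$ via $\rho(x_1,x_2)=f(x_1,x_2,0)$, together with its reciprocal $F=AX_1^2+BX_2^2+CX_3^2+2RX_2X_3+2SX_1X_3+2TX_1X_2$. I would emphasize that nowhere in this part of Dickson's argument is the primitivity of $\rho$ actually used --- only the solvability of (\ref{equation2}) and $\gcd(C,\Delta)=1$ --- so the construction, the invariants $\Omega,\Delta$ of $f$, and the positivity of $f$ (via $a_{11}=a>0$, $A_{33}=ab-t^2=\Omega C>0$ and $\operatorname{disc}(f)>0$ in Proposition~\ref{Dick1}) all carry over unchanged. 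The representation is \emph{proper} because it is realised by the standard embedding $(x_1,x_2)\mapsto(x_1,x_2,0)$, whose $2\times 2$ minors are $1,0,0$ and hence have gcd $1$. Thus the whole content of the proposition reduces to pinning down the $2$-adic behaviour of the coefficients.

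The heart of the proof is then a $2$-adic valuation analysis resting on the hypotheses $4\mid a$, $4\mid b$, $2\parallel t$, $\Delta$ odd, and $v_2(C)=1$ (the last being forced by $C=2$ or $C$ double an odd prime). First, reading (\ref{equation3}) as $R^2=BC-\Delta a$ and $S^2=AC-\Delta b$ and reducing modulo $2$ shows that $R$ and $S$ are even, since $C,a,b$ are even. Next, in $TC=RS-\Delta t$ the term $RS$ is divisible by $4$ while $\Delta t\equiv 2\pmod 4$ (because $2\parallel t$ and $\Delta$ is odd); hence $v_2(TC)=1$, and as $v_2(C)=1$ this forces $T$ to be odd. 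Returning to $BC=R^2+\Delta a$ and $AC=S^2+\Delta b$, both right-hand sides are now divisible by $4$ (as $R,S$ are even and $4\mid a,b$), so $A$ and $B$ are even. Finally, rewriting (\ref{equation4}) as $\Delta c=AB-T^2$ gives $c$ odd, since $AB\equiv 0\pmod 4$ while $T^2$ is odd.

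With these parities in hand the classification is immediate. For $f$, the integer $\sigma_f=\gcd(a,b,c,2r,2s,2t)$ divides $\gcd(a,2t,b)=4$ and also divides the odd number $c$, so $\sigma_f=1$ and $f$ is properly primitive (whence also $\tau_f=1$). For $F$, the reciprocal is primitive, so $\tau_F=\gcd(A,B,C,R,S,T)=1$; using the general relation $\tau_F\mid\sigma_F\mid 2\tau_F$ together with the fact that $A,B,C$ are all even (so $\sigma_F$ is even), we conclude $\sigma_F=2$, i.e.\ $F$ is improperly primitive.

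I expect the main obstacle to be the valuation bookkeeping rather than any conceptual difficulty: the entire conclusion hinges on the single relation $\Delta t\equiv 2\pmod 4$, which is precisely where the hypothesis $2\parallel t$ (as opposed to $4\mid t$) is indispensable and which makes $T$ odd --- losing this would collapse the improperly-primitive conclusion for $F$ and the properly-primitive conclusion for $f$ simultaneously. A secondary point demanding care is the claim that Dickson's construction of $f$ and $F$ genuinely does not require $\rho$ to be primitive; I would verify directly that the integrality of $A,B,T$ in (\ref{equation3}) uses only the solvability of (\ref{equation2}) and that the integrality of $r,s,c$ in (\ref{equation4}) uses only $\gcd(C,\Delta)=1$, so that the passage from the primitive setting of Proposition~\ref{Dick5} to the present content-$4$ setting is legitimate.
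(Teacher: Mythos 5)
Your proposal is correct and follows essentially the same route as the paper: reuse Dickson's construction from Proposition~\ref{Dick5} and then run the $2$-adic parity analysis on (\ref{equation3}) and (\ref{equation4}) to get $R,S,A,B$ even, $T$ odd, hence $c$ odd, from which $f$ is properly primitive and $F$ is improperly primitive. Your write-up is in fact slightly more careful than the paper's on two points it glosses over — that Dickson's construction never uses primitivity of $\rho$ (needed here since $\gcd(a,2t,b)=4$), and the explicit $\sigma$/$\tau$ bookkeeping at the end.
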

\begin{proof}
Similarly, the form $\rho$ can be represented properly by a positive ternary form $f$.

Following notations in equations (\ref{equation2}), (\ref{equation3}) and (\ref{equation4}).  We have $BC=R^2 + \Delta a \equiv 0 \pmod 2$. It follows that $2 \mid R$ and $2 \mid B$ since $2 \parallel C$. Similarly, we have $2 \mid S$ and $2 \mid A$. Moreover, we have $\Delta t =RS-TC \equiv 2 \pmod 4$, so $2 \nmid T$. We also have $AB-T^2=\Delta c$, so $2 \nmid c$.

It follows that the ternary form $f(x_1,x_2.x_3)=ax^2_{1}+bx^2_{2}+cx^2_{3}+2rx_{2}x_3+2sx_{1}x_3+2tx_1x_{2}$ is properly primitive, and the reciprocal $F(X_1,X_2,X_3)=AX^2_{1}+BX^2_{2}+CX^2_{3}+2RX_{2}X_3+2SX_{1}X_3+2TX_{1}X_2$ of $f$ is improperly primitive.
\end{proof}

%
The following proposition shows that the ternary quadratic form is unique in the equivalent class.

\begin{proposition}(\cite[Theorem 34]{Dickson1930})\label{Dick8}
If $C$ is a prime or double an odd prime, two primitive ternary forms having the same invariants $\Omega$ and $\Delta$ are equivalent if they properly present the same binary form of determinant $-4\Omega C$.
\end{proposition}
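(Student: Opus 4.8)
The plan is to reduce the statement to a uniqueness question for the solutions of the congruences (\ref{equation2}) modulo $C$, and then to exploit that $\mathbb{Z}/C\mathbb{Z}$ is a field (when $C$ is an odd prime) or a product of a field with $\mathbb{Z}/2\mathbb{Z}$ (when $C$ is double an odd prime), so that the relevant square roots are unique up to a single sign.

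First I would put each of the two ternary forms into a normal form adapted to the representation. A proper representation of $\rho=(a,2t,b)$ by a primitive ternary form $f$ carries a basis of the binary lattice into $f$ along a primitive rank-two sublattice; completing this to a $\mathbb{Z}$-basis of the ternary lattice yields a unimodular substitution after which
\[
f(x_1,x_2,x_3)=a x_1^2+2t x_1x_2+b x_2^2+2s x_1x_3+2r x_2x_3+c x_3^2,
\]
so that the section $x_3=0$ is exactly $\rho$. Performing this for both forms reduces the claim to showing that the residual data $(s,r,c)$ --- constrained so as to realize the prescribed invariants $\Omega$ and $\Delta$ --- is unique up to those transformations that fix $\rho$.

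Next I would pin down that residual freedom. The unimodular maps preserving the section $\rho$ are generated by the shears $e_3\mapsto e_3+pe_1+qe_2$ (for $p,q\in\mathbb{Z}$), the sign change $e_3\mapsto-e_3$, and the automorphs of $\rho$ acting on $e_1,e_2$. Under a shear one has $(s,r)\mapsto(s+ap+tq,\,r+tp+bq)$, while $c$ absorbs the quadratic contribution $f(p,q,1)$; hence $(s,r)$ is only well defined modulo the lattice spanned by the columns of $\begin{pmatrix}a&t\\t&b\end{pmatrix}$, whose determinant is $ab-t^2=\Omega C$. Translating through the reciprocal relations (\ref{equation3}) and (\ref{equation4}) converts this freedom into the residue class of the off-diagonal reciprocal coefficients $(R,S)$ modulo $C$, which must solve the system (\ref{equation2}); thus, with $\Omega$ and $\Delta$ fixed, the equivalence class of $f$ is determined by the orbit of a solution $(R,S)$ of (\ref{equation2}) under the sign change $(R,S)\mapsto(-R,-S)$ and the automorphs of $\rho$.

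The heart of the matter, and the step I expect to be the main obstacle, is to prove that this orbit is unique precisely under the hypothesis on $C$. If $C$ is an odd prime, then $S^2\equiv-\Delta b\pmod C$ has exactly the two roots $\pm S_0$ (using $C\nmid b$, which one may arrange since $\rho$ is primitive), the relation $RS\equiv\Delta t$ then determines $R$ uniquely from each sign of $S$, and the remaining congruence $R^2\equiv-\Delta a$ is automatic because $ab-t^2=\Omega C\equiv0\pmod C$. The two resulting pairs are $(R_0,S_0)$ and $(-R_0,-S_0)$, interchanged by $e_3\mapsto-e_3$; hence there is a single orbit and $f_1\sim f_2$. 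When $C$ is double an odd prime I would split by the Chinese Remainder Theorem: modulo the odd prime the same square-root-up-to-sign argument applies, and modulo $2$ the parities of $R,S,A,B$ are already forced --- exactly the computations carried out in the proofs of Propositions~\ref{Dick5} and \ref{Dick6} --- so no additional classes arise. Finally, since equivalent ternary forms have equivalent reciprocals (Dickson, Theorem~3), the equivalence of the reciprocals lifts back to $f_1\sim f_2$. The delicate point throughout is that the prime (or twice-odd-prime) hypothesis is exactly what collapses the square-root ambiguity to a single sign absorbed by $e_3\mapsto-e_3$; for composite $C$ the congruence (\ref{equation2}) generically has several inequivalent solution classes and uniqueness fails.
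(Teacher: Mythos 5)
The paper offers no proof of this proposition at all: it is quoted directly from Dickson's Theorem 34 as an external result, so there is no internal argument to compare yours against. Judged on its own, your reconstruction is correct and is essentially the classical argument, and its two load-bearing assertions both check out. First, writing a bordered form with section $\rho=(a,2t,b)$ and residual data $(s,r,c)$, the reciprocal coefficients satisfy $\Omega R=st-ar$ and $\Omega S=rt-sb$; a shear $e_3\mapsto e_3+pe_1+qe_2$ shifts $(R,S)$ by $(-qC,-pC)$ because $(ap+tq)t-a(tp+bq)=q(t^2-ab)=-q\Omega C$, the map $(s,r)\mapsto\Omega(R,S)$ is injective since its determinant is $t^2-ab=-\Omega C\neq 0$, and $c$ is pinned by $\det A_f=\Delta\Omega^2$; hence shear-and-sign orbits of bordered forms do biject with sign-classes of solutions of (\ref{equation2}) modulo $C$, as you assert, and matching $(R,S)$ exactly by a shear and a sign change forces the two bordered forms to coincide outright --- so your closing appeal to Dickson's Theorem 3, like the automorphs of $\rho$ you carry along, is never actually needed. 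Second, the sign-uniqueness of solutions of (\ref{equation2}) for $C$ an odd prime, or twice an odd prime via CRT, is right; note however that $C=2$ (allowed by the statement and the case used in Theorem \ref{zt2}) falls outside both of your cases, though it is trivial: mod $2$ the congruences force $R\equiv\Delta a$ and $S\equiv\Delta b$, consistently, because $ab\equiv t^2\equiv t\pmod 2$. Two hypotheses you use silently deserve flagging. You need $\gcd(\Delta,C)=1$ to invert $\Delta$ modulo $C$; this is harmless here, since the paper only invokes the proposition with $\Delta=1$, and it is an ambient hypothesis in Proposition \ref{Dick5} as well, but it should be stated. And you invoke primitivity of $\rho$ to arrange $C\nmid b$, whereas the paper also applies this proposition to the imprimitive form $\rho=4\rho'$ in Theorem \ref{zt2}; in that case $\gcd(a,2t,b)=4$ together with $ab-t^2=\Omega C$ still prevents the odd prime factor of $C$ from dividing both $a$ and $b$, so your argument extends, but the sentence as written does not cover it.
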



\section{Endomorphism rings of supersingular elliptic curves}
In this section, we study the correspondence between endomorphism rings of supersingular elliptic curves defined over $\mathbb{F}_{p^2}$ and ternary quadratic forms with discriminant $p$. We divide into two cases. Firstly, we study endomorphism rings of supersingular elliptic curves defined over $\mathbb{F}_{p}$. Secondly, we study endomorphism rings of $\mathbb{Z}[\sqrt{-cp}]$-oriented supersingular elliptic curves.
\subsection{Supersingular elliptic curves over $\mathbb{F}_p$}

Let $p>3$ be a prime. If the supersingular elliptic curve $E$ is defined over $\mathbb{F}_p$, then $\text{End}(E)$ contains an element $\alpha$ with minimal polynomial $x^2+p$ (see \cite{MR3451433}). Equivalently, the endomorphism ring $\text{End}(E)$ contains a subring $\mathbb{Z}[\sqrt{-p}]$. Ibukiyama \cite{MR683249} has given an explicit description of all maximal orders in $B_{p, \infty}$ containing a root of $x^2+p=0$.

Choose a prime $q$ such that
\begin{align}\label{e1}
q \equiv 3 \pmod 8, \quad \left( \frac{p}{q} \right) =-1.
\end{align}
Then $B_{p,\infty}$ can be written as $B_{p,\infty}=\mathbb{Q}+\mathbb{Q}\alpha+\mathbb{Q}\beta+\mathbb{Q}\alpha\beta$ where $\alpha^2=-p$, $\beta^2=-q$ and $\alpha\beta=-\beta\alpha$. Let $r$ be an integer such that $r^2+p \equiv 0 \pmod  q$. Put
$$\mathcal{O}(q,r)=\mathbb{Z} + \mathbb{Z}\frac{1+\beta}{2} + \mathbb{Z} \frac{\alpha(1+\beta)}{2} + \mathbb{Z}\frac{(r+\alpha)\beta}{q}.$$
When $p \equiv 3 \pmod  4$, we further choose an integer $r'$ such that $(r')^2+p \equiv 0 \pmod {4q}$ and put $$\mathcal{O}'(q,r')=\mathbb{Z} + \mathbb{Z}\frac{1+\alpha}{2} + \mathbb{Z} \beta + \mathbb{Z}\frac{(r'+\alpha)\beta}{2q}.$$

Both $\mathcal{O}(q,r)$ and $\mathcal{O}'(q,r')$ are independent of the choices of $r$ and $r'$ up to isomorphism respectively. Ibukiyama's results \cite{MR683249} show that both $\mathcal{O}(q,r)$ and $\mathcal{O}'(q,r')$ are maximal orders of $B_{p,\infty}$ and the endomorphism ring $\text{End}(E)$ is isomorphic to $\mathcal{O}(q,r)$ or $\mathcal{O'}(q,r')$ with suitable choice of $q$ if $j(E) \in \mathbb{F}_p$.

Let $j \in \mathbb{F}_p$ be a supersingular $j$-invariant and $E(j)$ be the corresponding supersingular elliptic curve. We have $\text{End}(E(j)) \cong \mathcal{O}'(q,r')$ if $\frac{1+\pi}{2} \in \text{End}(E(j))$ and $\text{End}(E(j)) \cong \mathcal{O}(q,r)$ if $\frac{1+\pi}{2} \notin \text{End}(E(j))$ for some $q$ satisfying (\ref{e1}). Especially, we have $\text{End}(E(0)) \cong \mathcal{O}(3,1)$.

\begin{proposition}\label{zt1}
Let $p \ge 5$ be a prime. Every maximal order $\mathcal{O}(q,r)$ can represent a primitive binary quadratic form with discriminant $-16p$. Moreover, this binary quadratic form is not in the principal genus. If $p \equiv 3 \pmod 4$, then every maximal order $\mathcal{O}'(q,r')$ can represent a binary quadratic form whose primitive form has discriminant $-p$.
\end{proposition}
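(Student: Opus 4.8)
The plan is to pass through the discriminant-$p$ ternary form attached to $\mathcal{O}(q,r)$ and then invoke the reciprocal machinery of Section~3. First I would feed the Ibukiyama basis $1,\tfrac{1+\beta}{2},\tfrac{\alpha(1+\beta)}{2},\tfrac{(r+\alpha)\beta}{q}$ into the dual (even Clifford) construction of Section~2.3, computing the reduced norm on $\mathbb{Z}(jk-kj)+\mathbb{Z}(ki-ik)+\mathbb{Z}(ij-ji)$ to obtain the ternary form $F=N\mathrm{nrd}^{\#}$ of discriminant $p$ that corresponds to $\mathcal{O}(q,r)$ under Proposition~\ref{BS1}. Completing the square shows $F$ is positive definite (Proposition~\ref{Dick1}), and since in our normalisation its diagonal coefficients are even while $q\equiv 3\pmod 8$ keeps an off-diagonal entry odd, $F$ is improperly primitive.

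Next I would extract the binary form by reciprocity. Writing $F$ as the reciprocal of a properly primitive ternary form $f$ and restricting $f$ to a suitable coordinate plane produces a binary form $\rho=(a,2t,b)$ that $f$ represents, of discriminant $-4\,\Omega C$ with $\Omega C$ the corresponding $(3,3)$-cofactor of $f$. A direct evaluation of this cofactor from the explicit $F$ gives $\Omega C=4p$, hence $\mathrm{disc}(\rho)=-16p$. The decisive local point is the $2$-adic computation of the content of $\rho$: for $\mathcal{O}(q,r)$ it comes out equal to $1$, so $\rho$ is properly primitive, which is exactly the situation of Proposition~\ref{Dick5}; uniqueness of the representing form is then guaranteed by Proposition~\ref{Dick8}. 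This yields the required primitive binary form of discriminant $-16p$.

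For the genus assertion I would exhibit an odd value $\rho_1$ represented by $\rho$ with $\rho_1\equiv 3\pmod 4$ and $\gcd(\rho_1,p)=1$; such a value is available because $\mathbb{Z}[\beta]=\mathbb{Z}[\sqrt{-q}]\subset\mathcal{O}(q,r)$ with $q\equiv 3\pmod 8$. The map $m\mapsto\bigl(\tfrac{-1}{m}\bigr)=(-1)^{(m-1)/2}$ is a genus character on forms of discriminant $-16p$ that is identically $+1$ on the principal genus, since the principal form $x^{2}+4py^{2}$ represents only odd values $\equiv 1\pmod 4$. As $\bigl(\tfrac{-1}{\rho_1}\bigr)=-1$, the form $\rho$ cannot lie in the principal genus.

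The case $p\equiv 3\pmod 4$ and $\mathcal{O}'(q,r')$ follows the same route with the basis $1,\tfrac{1+\alpha}{2},\beta,\tfrac{(r'+\alpha)\beta}{2q}$, the only change being that the embedded quadratic ring is now the maximal order $\mathbb{Z}[\tfrac{1+\sqrt{-p}}{2}]$ and the $2$-adic computation gives $\rho$ content $4$; by Proposition~\ref{Dick6} its primitive part then has discriminant $-16p/16=-p$. The main obstacle throughout is precisely this $2$-adic bookkeeping: determining the content of $\rho$ (which separates the $\mathcal{O}$ and $\mathcal{O}'$ families) and pinning down an odd represented value in the correct residue class modulo $4$. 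It is here that the hypotheses $q\equiv 3\pmod 8$ and $\bigl(\tfrac{p}{q}\bigr)=-1$ are genuinely used.
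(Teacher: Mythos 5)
Your proposal is correct, but it takes a genuinely different route from the paper's proof, which is essentially a citation: Kaneko \cite{MR1040429} already exhibits the binary form $(q,4r,\tfrac{4r^2+4p}{q})$ of discriminant $-16p$ as the norm form on an explicit rank-two sublattice of $\mathcal{O}(q,r)$, the genus claim is quoted from \cite{xiao2022endomorphism}, and for $\mathcal{O}'(q,r')$ the form $(4q,4r',\tfrac{(r')^2+p}{q})$, with primitive part $(q,r',\tfrac{(r')^2+p}{4q})$ of discriminant $-p$, is written down directly. You instead run the paper's Section~2.3/Section~3 machinery in reverse: Ibukiyama basis $\to$ Brandt--Sohn ternary form $F$ $\to$ reciprocal $f$ $\to$ restriction to a coordinate plane. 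This works, and when your ``direct evaluation'' is actually carried out it reproduces Kaneko's form: one finds $ki-ik=-\alpha$, of reduced norm $p$, so $F$ has a diagonal coefficient $2$ (equivalently, your cofactor is $4p$); the reciprocal $f$ is the norm form on the lattice spanned by $\beta$, $\alpha+\alpha\beta$, $\tfrac{2(r+\alpha)\beta}{q}$, and its restriction to the plane complementary to the coefficient-$2$ variable is exactly $(q,4r,\tfrac{4(r^2+p)}{q})$. Your route buys two things: the $\chi_{-4}$ genus-character argument (sound, since $\rho$ represents $q\equiv 3\pmod 8$ while the principal form $x^2+4py^2$ represents only odd values $\equiv 1\pmod 4$) makes the genus claim self-contained, and you establish en route that the ternary form of $\mathcal{O}(q,r)$ properly represents $2$, which the paper only derives later (Theorem~\ref{zt3}) from Proposition~\ref{zt1}; since you obtain it by direct computation rather than by invoking Theorem~\ref{zt3}, there is no circularity. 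The costs: the equality $\Omega C=4p$ is the entire content of the statement and must be displayed, not merely asserted (it is precisely the computation $ki-ik=-\alpha$ above); and your appeals to Propositions~\ref{Dick5}, \ref{Dick6} and~\ref{Dick8} are misplaced, as those results go from a binary form to a representing ternary form and concern uniqueness, neither of which Proposition~\ref{zt1} claims---though this is harmless.
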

\begin{proof}
In the proof of \cite[Theoreom 1]{MR1040429}, Kaneko has shown that every maximal order $\mathcal{O}(q,r)$ can represent a primitive binary quadratic form $(q,4r,\frac{4r^2+4p}{q})$ with discriminant $-16p$. It is easy to show that this form is not in the principal genus \cite{xiao2022endomorphism}. Similarly, every maximal order $\mathcal{O}'(q,r')$ can represent a binary quadratic form $(4q,4 r',\frac{(r')^2+p}{q})$, and its primitive form $(q, r',\frac{(r')^2+p}{4q})$ has discriminant $-p$.
\end{proof}

\begin{theorem}\label{zt2}
Let $\rho$ be a binary quadratic form with discriminant $-16p$. If $\rho$ is not in the principal genus or $\rho=4\rho'$ where $\rho'$ is primitive, then $\rho$ can be represented properly by a unique primitive ternary quadratic form $f$. Moreover, the reciprocal $F$ of $f$ is improperly primitive with discriminant $p$.
\end{theorem}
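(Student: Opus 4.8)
The plan is to apply Dickson's representation theorems (Propositions \ref{Dick5} and \ref{Dick6}) with the single admissible choice of invariants forced by the target discriminant, and to get uniqueness from Proposition \ref{Dick8}. Write $\rho=(a,2t,b)$; since $\operatorname{disc}(\rho)=(2t)^2-4ab=-16p$ we have $t^2-ab=-4p$, and the middle coefficient is automatically even. To match the hypotheses of Propositions \ref{Dick5}/\ref{Dick6} I must present this discriminant as $-4\Omega C$ and pick an odd $\Delta$. The reciprocal $F$ of the resulting ternary form $f$ has matrix $A_F=\Omega^{-1}\operatorname{adj}(A_f)$, so $\det A_F=\Omega^{-3}(\det A_f)^2=\Omega^{-3}(\Delta\Omega^2)^2=\Delta^2\Omega$ and hence $\operatorname{disc}(F)=\Delta^2\Omega/2$. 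Demanding $\operatorname{disc}(F)=p$ with $\Delta$ odd and $p$ prime forces $\Delta=1$ and $\Omega=2p$, whence $C=4p/\Omega=2$. So throughout I would fix $\Omega=2p$, $C=2$, $\Delta=1$.

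First I would verify the arithmetic hypotheses for this choice. Since $\gcd(C,\Delta)=\gcd(2,1)=1$ and $t^2-ab=-4p$ gives $t\equiv ab\pmod 2$, the congruences (\ref{equation2}) reduce mod $2$ to $R\equiv a$, $S\equiv b$, $RS\equiv t\pmod 2$, which are solvable; in the case $\rho=4\rho'$ all of $a,b,t$ are even and the congruences hold with $R\equiv S\equiv 0$. The positivity of $\rho$ is inherited from the positive definite forms of Proposition \ref{zt1}. Thus Propositions \ref{Dick5} and \ref{Dick6} produce a positive, properly primitive ternary form $f$ that properly represents $\rho$ and has invariants $\Omega=2p$, $\Delta=1$; by the determinant computation above its reciprocal $F$ then has discriminant exactly $p$.

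It remains to show $F$ is improperly primitive, and this splits along the two hypotheses. If $\rho=4\rho'$ with $\rho'$ primitive, then $\rho'$ has discriminant $-p$ and hence odd middle coefficient, so writing $\rho=(4a',4b',4c')$ gives $2\parallel t$ (as $t=2b'$ with $b'$ odd) and $\gcd(a,2t,b)=4$; this is exactly the setting of Proposition \ref{Dick6}, which directly yields that $F$ is improperly primitive. If instead $\rho$ is primitive but not in the principal genus, I would invoke the ``in particular'' clause of Proposition \ref{Dick5}: with $\Omega$ even, $C=2$, and $\Delta=1$ it suffices to produce an odd $\rho_1$ represented by $\rho$ with $\rho_1\equiv 3\pmod 4$.

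The crux is therefore this genus-theoretic input, and it is the step I expect to be the main obstacle. The claim needed is that for discriminant $-16p$ a primitive form represents an odd integer $\equiv 3\pmod 4$ precisely when it lies outside the principal genus. Here the principal form $x^2+4py^2$ already represents both $1$ and $5$ modulo $8$, so $\chi_8$ is not a genus character; the only assigned characters are $\chi_{-4}$ (the residue mod $4$ of an odd represented integer) and $\left(\tfrac{\cdot}{p}\right)$. From $\left(\tfrac{-16p}{n}\right)=1$ for every odd $n$ coprime to $p$ represented by such a form, quadratic reciprocity links these two characters (they coincide when $p\equiv1\pmod4$, and $\left(\tfrac{\cdot}{p}\right)$ is trivial when $p\equiv3\pmod4$). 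Hence there are exactly two genera, distinguished by $\chi_{-4}$, and the principal genus is cut out by $\chi_{-4}=+1$; a form outside it represents odd integers $\equiv3\pmod4$, giving the desired $\rho_1$. Finally, uniqueness of $f$ up to equivalence follows from Proposition \ref{Dick8}, since $C=2$ is prime and $f$ is a primitive ternary form with invariants $\Omega,\Delta$ that properly represents the binary form $\rho$ of determinant $-4\Omega C=-16p$.
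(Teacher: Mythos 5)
Your proposal is correct and takes essentially the same route as the paper's proof: fix the forced invariants $\Omega=2p$, $C=2$, $\Delta=1$, apply Propositions \ref{Dick5} and \ref{Dick6} in the two respective cases, and obtain uniqueness from Proposition \ref{Dick8}. The only difference is one of detail: you explicitly supply the genus-theoretic input needed for the ``in particular'' clause of Proposition \ref{Dick5} (that a primitive form of discriminant $-16p$ outside the principal genus represents odd integers $\equiv 3 \pmod 4$, since $\chi_{-4}$ is the lone effective genus character), a verification the paper's proof asserts without elaboration.
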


\begin{proof}
Let $\rho=(a,2t,b)$ be a primitive binary quadratic form with discriminant $-16p$. Moreover, we also assume that $\rho$ is not in the principal genus. Let $C=2$, $\Delta=1$ and $\Omega=2p$. It is easy to show that the congruences (\ref{equation2}) are solvable. By Proposition \ref{Dick5}, $\rho$ can be represented by a primitive ternary quadratic form $f$ with invariants $\Omega=2p$ and $\Delta=1$. Moreover, the ternary form $f$ is unique in the equivalent class by Proposition \ref{Dick8}. The reciprocal $F$ of $f$ is improperly primitive with discriminant $p$.

By Proposition \ref{Dick6}, similar results hold for $\rho=4\rho'$ where $\rho'$ is primitive.
\end{proof}
\begin{remark}
We can compute the maximal order in $B_{p,\infty}$ corresponding to the ternary quadratic form $F$ by the method in Section 2.3. Moreover, this maximal order is isomorphic to some $\mathcal{O}(q,r)$ or $\mathcal{O}'(q,r')$.
\end{remark}
In fact, Proposition \ref{zt1} and Theorem \ref{zt2} establish a correspondence between maximal orders in $B_{p,\infty}$ which contain a subring $\mathbb{Z}[\sqrt{-p}]$ and primitive binary quadratic forms with discriminant $-16p$ or $-p$.

\begin{theorem}\label{zt3}
Let $E$ be a supersingular elliptic curve over $\mathbb{F}_{p^2}$. Assume that the endomorphism ring $\operatorname{End}(E) \cong \mathcal{O}$ corresponds to a ternary quadratic form $F$ with discriminant $p$. Then $E$ is defined over $\mathbb{F}_p$ if and only if $F$ can represent $2$ properly.
\end{theorem}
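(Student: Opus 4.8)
The plan is to reduce Theorem \ref{zt3} to the arithmetic criterion that $\mathcal{O}$ contains a square root of $-p$, and then transport that criterion across the reciprocal of $F$. Recall from \cite{MR3451433} that $j(E)\in\mathbb{F}_p$ holds exactly when $\mathrm{End}(E)$ contains an element of minimal polynomial $x^2+p$; equivalently, $\mathcal{O}\cong\mathrm{End}(E)$ contains a trace-zero element of reduced norm $p$, i.e. $\mathbb{Z}[\sqrt{-p}]\hookrightarrow\mathcal{O}$, and by Ibukiyama \cite{MR683249} these are precisely the orders $\mathcal{O}(q,r)$ and $\mathcal{O}'(q,r')$. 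Hence it suffices to prove that $\mathbb{Z}[\sqrt{-p}]\subseteq\mathcal{O}$ if and only if the Brandt--Sohn form $F$ of $\mathcal{O}$ (of discriminant $p$) represents $2$ properly. The tool I would use is the correspondence recorded after Theorem \ref{zt2}, which matches the orders $\mathcal{O}(q,r)$, $\mathcal{O}'(q,r')$ with primitive binary forms of discriminant $-16p$ or $-p$.

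For the forward implication I would assume $\mathbb{Z}[\sqrt{-p}]\subseteq\mathcal{O}$, so $\mathcal{O}\cong\mathcal{O}(q,r)$ or $\mathcal{O}'(q,r')$. By Proposition \ref{zt1}, $\mathcal{O}$ represents a binary form $\rho$ of discriminant $-16p$ (primitive and not in the principal genus in the first case, of the shape $4\rho'$ in the second). Theorem \ref{zt2} then produces, via $x_3=0$, the unique properly primitive ternary form $f$ with invariants $\Omega_f=2p$, $\Delta_f=1$ properly representing $\rho$, whose reciprocal is the Brandt--Sohn form $F$ of $\mathcal{O}$. In the notation of Proposition \ref{Dick5} the $X_3^2$-coefficient of $F$ equals $C=-\mathrm{disc}(\rho)/(4\Omega_f)=16p/(8p)=2$, so $F(0,0,1)=2$. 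Since $(0,0,1)$ is primitive and proper representation is an invariant of the equivalence class (see \cite{Dickson1930}), $F$ represents $2$ properly, independently of the chosen $f$.

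For the converse I would start from a proper representation $F(v_0)=2$, complete $v_0$ to a $\mathbb{Z}$-basis, and thereby assume $F(0,0,1)=2$. Since $\mathrm{disc}(F)=p$ forces $\det(A_F)=2p$ to be squarefree, the relation $\det(A_F)=\Delta_F\Omega_F^2$ gives $\Omega_F=1$ and $\Delta_F=2p$, so the reciprocal $f$ of $F$ is properly primitive with the invariants swapped, $\Omega_f=2p$ and $\Delta_f=1$ (matching Theorem \ref{zt2}). Then $C=F(0,0,1)=2$ and Proposition \ref{Dick5} give that the binary form $\rho=f|_{x_3=0}$ has discriminant $-4\Omega_f C=-16p$. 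Because $f$ is, under the even Clifford construction of Section 2.3, the reduced-norm form attached to the trace-zero part of $\mathcal{O}$, the order $\mathcal{O}$ represents $\rho$; the correspondence of Proposition \ref{zt1} and Theorem \ref{zt2} then places $\mathcal{O}$ among the $\mathcal{O}(q,r)$, $\mathcal{O}'(q,r')$, i.e. $\mathbb{Z}[\sqrt{-p}]\subseteq\mathcal{O}$, so $E$ is defined over $\mathbb{F}_p$.

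The hard part will be the converse, and within it the precise dictionary between $F$ and $\mathcal{O}$. One must match Dickson's integral-matrix normalization of reciprocal forms with the even Clifford description of $\mathcal{O}$ in Section 2.3, so as to identify the reciprocal $f$ with the reduced-norm form on the trace-zero part of $\mathcal{O}$, and then argue that representing $\rho$ of discriminant $-16p$ yields an optimal embedding of $\mathbb{Z}[\sqrt{-p}]$ itself rather than only of $\mathbb{Z}[\sqrt{-4p}]$. This is exactly the point controlled by the parity built into the construction, namely the value $C=2$ and the improper primitivity of $F$ coming from Propositions \ref{Dick5} and \ref{Dick6}; verifying that the genus condition distinguishing $\mathcal{O}(q,r)$ from orders not containing $\sqrt{-p}$ is respected under reciprocation is the step I expect to demand the most care.
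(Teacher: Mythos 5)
Your forward direction is correct and coincides with the paper's own argument: Proposition \ref{zt1} supplies the binary form of discriminant $-16p$, Theorem \ref{zt2} (via Proposition \ref{Dick5} with $C=2$) produces the ternary form $F$, and the $X_3^2$-coefficient $C=2$ is properly represented by $F$.

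The converse, however, contains a genuine gap at exactly the step you defer. Having reduced to the statement that the reciprocal $f$ of $F$ properly represents a binary form $\rho$ of discriminant $-16p$, you conclude that ``the correspondence of Proposition \ref{zt1} and Theorem \ref{zt2} then places $\mathcal{O}$ among the $\mathcal{O}(q,r)$, $\mathcal{O}'(q,r')$''. This is circular: Proposition \ref{zt1} only goes the other way (orders containing $\sqrt{-p}$ represent such forms); it does not say that a maximal order representing such a form contains $\sqrt{-p}$, and, as you yourself observe, a representation of discriminant $-16p$ a priori yields only an embedding of $\mathbb{Z}[2\sqrt{-p}]$. What closes the loop in the paper's framework is an explicit computation in the even Clifford order, carried out in the proof of Theorem \ref{zt7} (your situation is the case $c=1$): writing $F=AX_1^2+BX_2^2+2X_3^2+2RX_2X_3+2SX_1X_3+2TX_1X_2$ and $\mathcal{O}=\mathbb{Z}+\mathbb{Z}i+\mathbb{Z}j+\mathbb{Z}k$ with the multiplication laws of Section 2.3, the element $\alpha=(2i-R)(2j-S)+RS-2T$ is trace zero with $\alpha^2=-4p$; expanding with $ij=T-k$ shows $\alpha=2\gamma$, where $\gamma=RS+T-Si-Rj-2k\in\mathcal{O}$, hence $\gamma^2=-p$. (This divisibility by $2$, available because $F$ is improperly primitive so that the Clifford laws have the required integrality, is precisely what upgrades $\mathbb{Z}[2\sqrt{-p}]$ to $\mathbb{Z}[\sqrt{-p}]$.) Then \cite{MR3451433} gives $j(E)\in\mathbb{F}_p$. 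Without this construction, or an equivalent argument, your converse remains an outline rather than a proof.
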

\begin{proof}
By Proposition \ref{zt1}, the supersingular elliptic curve $E$ is defined over $\mathbb{F}_p$ if and only if $\operatorname{End}(E) \cong \mathcal{O}$ can represent a binary reduced quadratic form with discriminant $-16p$.

In the proof of Proposition \ref{Dick5}, the ternary quadratic form $F$ can represent $C$ properly, and we have $C=2$ in the proof of Theorem \ref{zt2}.
\end{proof}

\begin{theorem}\label{zt4}
Let $D$ be a prime with $D < p$ (resp. $4D<p$). Let $E$ be a supersingular elliptic curve defined over $\mathbb{F}_p$. If the imaginary quadratic order $O$ with discriminant $-D$ (resp. $-4D$) can be embedded into the endomorphism ring $\operatorname{End}(E)\cong \mathcal{O}$, one can compute $\mathcal{O}$ by solving one square root in $\mathbb{F}_D$.
\end{theorem}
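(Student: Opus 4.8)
The plan is to reduce the problem to computing the binary quadratic form attached to $\operatorname{End}(E)$ and then to read that form off from a single modular square root. Since $E$ is defined over $\mathbb{F}_p$, Proposition \ref{zt1} shows that $\operatorname{End}(E)\cong\mathcal{O}$ represents a primitive binary quadratic form $\rho$ of discriminant $-16p$ (if $\mathcal{O}\cong\mathcal{O}(q,r)$) or one whose primitive part has discriminant $-p$ (if $\mathcal{O}\cong\mathcal{O}'(q,r')$). By Theorem \ref{zt2}, $\rho$ determines its reciprocal ternary form of discriminant $p$ and hence, through Brandt--Sohn correspondence and the recipe of Section 2.3, the maximal order $\mathcal{O}$ itself. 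Thus it suffices to compute $\rho$.

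The first and decisive step is to deduce from the embedding of $O$ that $\rho$ represents $D$. Let $\pi$ (with $\pi^2=-p$) be the Frobenius, which furnishes the orientation; the embedding supplies a trace-zero element $\theta\in\mathcal{O}$ with $\theta^2=-D$ (take $\theta=\sqrt{-D}$, or $\theta=2\omega-1$ when $\operatorname{disc}O=-D$). Expanding $\theta$ in the explicit Ibukiyama basis of $\mathcal{O}(q,r)$ (resp. $\mathcal{O}'(q,r')$) and using that $\alpha=\pi$, $\beta$, $\alpha\beta$ are mutually orthogonal for the reduced trace pairing, the relation $\operatorname{nrd}(\theta)=D$ reads $A^2p+Q(\theta')=D$, where $A$ is the $\pi$-coordinate of $\theta$ and $Q(\theta')\ge 0$ is the norm of the part orthogonal to $\pi$. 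A direct inspection of these bases shows that, after imposing $\operatorname{trd}(\theta)=0$, the coordinate $A$ lies in $\tfrac12\mathbb{Z}$ for $\mathcal{O}(q,r)$ and in $\mathbb{Z}$ for $\mathcal{O}'(q,r')$. Hence the bound $4D<p$ (resp. $D<p$) yields $A^2p\le D<\tfrac{p}{4}$ (resp. $<p$), forcing $A=0$. Therefore $\theta$ anticommutes with $\pi$, and $D=\operatorname{nrd}(\theta)$ is a value of the reduced norm form on the rank-two lattice of trace-zero elements orthogonal to $\pi$, which is the form $\rho$ of Proposition \ref{zt1}; thus $\rho$ represents $D$, primitively since $D$ is prime.

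Knowing that $\rho$ represents $D$ primitively, the second step recovers $\rho$: it is equivalent to a form $(D,b,c)$ with $b^2-4Dc=\operatorname{disc}(\rho)$, so $b$ solves $b^2\equiv\operatorname{disc}(\rho)\pmod{4D}$. Since $\operatorname{disc}(\rho)$ is $-16p$ or $-p$, this reduces to extracting a square root of $-p$ in $\mathbb{F}_D$ and adjusting the parity of $b$ modulo $4$, which is the single square root in $\mathbb{F}_D$ promised. Setting $c=(b^2-\operatorname{disc}(\rho))/(4D)$ and reducing $(D,b,c)$ returns $\rho$ (the two roots $\pm b$ give $\rho$ and its conjugate, i.e. the data of $E$ and of $E^p$, which share the same order type). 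Feeding $\rho$ into Theorem \ref{zt2} then produces the ternary form of discriminant $p$ and, via Section 2.3, the order $\mathcal{O}\cong\operatorname{End}(E)$.

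The main obstacle is this first step, together with the correct matching of cases. One must confirm the integrality of the $\pi$-coordinate claimed above — an integer for $\mathcal{O}'(q,r')$, a half-integer for $\mathcal{O}(q,r)$ — since it is precisely this, paired with the size bound, that forces the coordinate to vanish; and one must line this up with the discriminant of $O$ so that the discriminant $-4D$ case is governed by $4D<p$ and the discriminant $-D$ case by $D<p$. A further point is that, while for $\mathcal{O}'(q,r')$ the orthogonal lattice carries the discriminant $-p$ form of Proposition \ref{zt1} directly, for $\mathcal{O}(q,r)$ it a priori carries a discriminant $-4p$ form lying one index above the discriminant $-16p$ form $\rho$, so a parity analysis of the remaining coordinate — in the spirit of the congruences tracked in Propositions \ref{Dick5} and \ref{Dick6} — is needed to place $D$ among the values genuinely represented by $\rho$. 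Granting this, the square-root reconstruction and the appeal to Theorem \ref{zt2} are routine.
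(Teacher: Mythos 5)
Your overall architecture (embedding forces the binary form attached to $\operatorname{End}(E)$ to represent a small number; one square root mod $D$ then reconstructs that form; Theorem \ref{zt2} finishes) parallels the paper's, but your central claim — that the form $\rho$ of Proposition \ref{zt1} represents $D$ itself — is false in the $\operatorname{disc}(O)=-4D$ case, and this is fatal to the algorithm as you describe it. What is true, and what the paper actually uses, is that $\rho$ represents $\lvert\operatorname{disc}(O)\rvert$, i.e. $4D$ (resp.\ $D$). Concrete counterexample: $p=31$, $D=5$. Both $\mathbb{F}_{31}$-roots $j=2,4$ of $H_{-20}(X)$ are supersingular, and exactly one of the two curves has $\operatorname{End}$ of type $\mathcal{O}(q,r)$; its attached form of discriminant $-496$ is $(7,\pm6,19)$, which represents $20$ (at $(1,\mp1)$) but cannot represent $5$, since its minimum is $7$. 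Your reconstruction step would instead output $(5,\pm2,25)$, the only classes of discriminant $-496$ with leading coefficient $5$; these lie in the \emph{principal} genus, correspond to no supersingular curve, and Theorem \ref{zt2} (which requires the form to be outside the principal genus) does not even apply to them. So in the $-4D$ case your procedure returns a wrong form, not merely an unverified one. (Your $-D$ case, $D\equiv 3\pmod 4$, is fine once the coordinate integrality is settled.)

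The failure occurs exactly at the two points you flagged and deferred. In the example, $\sqrt{-5}$ does fall into the rank-two lattice orthogonal to $\alpha$ (your size bound works, $5<31/4$), but with \emph{odd} coordinate along $\frac{(r+\alpha)\beta}{q}$: the orthogonal lattice carries a form of discriminant $-4p$, and only that form represents $5$, while $\rho$ is its restriction to an index-$2$ sublattice. The repair is not a parity analysis of your setup but a change of element: use $\theta=2\theta_0-\operatorname{trd}(\theta_0)$, a trace-zero element of norm $\lvert\operatorname{disc}(O)\rvert$ lying in $2\mathcal{O}+\mathbb{Z}$. The trace-zero part of $2\mathcal{O}+\mathbb{Z}$ has integral $\alpha$-coordinate for \emph{both} Ibukiyama types, and its orthogonal-to-$\alpha$ part is precisely the index-$2$ sublattice carrying $\rho$; then $\lvert\operatorname{disc}(O)\rvert<p$ (i.e. $4D<p$ resp.\ $D<p$, matching the theorem's hypotheses) forces $\theta$ into that sublattice, and the reconstruction must solve $x^2\equiv-16p\pmod{16D}$, giving leading coefficient $4D$ (resp.\ mod $4D$, leading coefficient $D$), exactly as in the paper. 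Note finally that the paper's own proof takes an entirely different route: it never computes in the quaternion order, but deduces solvability of the congruence from CM theory ($j$ is an $\mathbb{F}_p$-root of the Hilbert class polynomial, together with the Legendre-symbol result of [MR4432517]) and pins down the answer via uniqueness of the $\mathbb{F}_p$-roots, using $2$-torsion to separate the two roots of $H_{-4D}$. Your lattice route, with the fix above, would be a more self-contained alternative; as written, it is incorrect.
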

\begin{proof}
Since the imaginary quadratic order $O$ can be embedded into the endomorphism ring of $E(j)$, we have that $j$ is a $\mathbb{F}_p$-root of the Hilbert class polynomial $H_{-D}(X)$ (or $H_{-4D}(X)$) and $\left( \frac{-D}{p} \right)=-1$. If $D \equiv 1 \pmod 4$, then $\left( \frac{D}{p} \right)=1$ by \cite[Theorem 1.1]{MR4432517}. It follows that $p \equiv 3 \pmod 4$ in this case. Moreover, the Hilbert class polynomial $H_{-4D}(X)$ has two $\mathbb{F}_p$-roots, and one can distinguish these two supersingular elliptic curves by $2$-torsion points. If $D \equiv 3 \pmod 4$, then the Hilbert class polynomial $H_{-D}$ (or $H_{-4D}(X)$) has only one $\mathbb{F}_p$-roots.

In all cases, we have $\left( \frac{-p}{D} \right)=1$, so the equation $x^2 \equiv -16p \pmod {16 D}$ (or $x^2 \equiv -16p \pmod {4 D}$) with $-4D < x \le 4D$ (or $-2D < x \le 2D$) is solvable. Moreover, the solution is unique up to $\pm 1$, so we get a binary form with discriminant $-16p$ or $-p$. By Theorem \ref{zt2}, one can compute a ternary quadratic form $F$ with discriminant $p$ easily. In general, In order to compute the endomorphism ring $\text{End}(E(j)) \cong \mathcal{O}$, we just need to solve one square root in $\mathbb{F}_D$.
\end{proof}

\begin{remark}
By Ibukiyama's theorem \cite{MR683249}, the prime $q$ satisfies $q \equiv 3 \pmod 8$. In Theorem \ref{zt4}, we do not need $D \equiv 3 \pmod 8$.

However, for $D >p$, there are more then one $\mathbb{F}_p$-roots of $H_{-D}(X)$ (or $H_{-4D}(X)$).  We can compute only one ternary quadratic form corresponding to the endomorphism ring of one curve.
\end{remark}

\begin{example}\label{ex1}
Let $p=83$. We have $\left( \frac{-7}{83} \right)=-1$. The $\mathbb{F}_p$-root of Hilbert class polynomial $H_{-7}(X)\equiv X-28 \pmod {83}$ is a supersingular j-invariant.

The elliptic curve $E:y^{2}=x^{3}+77x+12$ with $j(E)=28$ is a supersingular elliptic curve over $\mathbb{F}_{83}$. Solving the equation $t^2\equiv -16p \pmod {4 \times 7}$, we can compute a binary quadratic form $(7,4,48)$. Moreover, we can compute a ternary quadratic form $F(x,y,z)=2x^2+4y^2+24z^2-2yz-2xy$ with discriminant $83$, and it corresponds to a maximal order $\mathcal{O}=\mathbb{Z}+\mathbb{Z}i+\mathbb{Z}j+\mathbb{Z}k$, where $i,j,k$ satisfy the following equations:
$$ \begin{array}{ll}
  i^2=-i-24 \quad \quad & jk=\bar{i}=-1-i \vspace{0.5ex}  \\
  j^2=-12 \quad \quad & ki=2\bar{j}=-2j \vspace{0.5ex}  \\
  k^2=-k-2 \quad \quad & ij=12\bar{k}=12(-1-k).
 \end{array}
$$

By Ibukiyama's theorem \cite{MR683249}, the prime $59 \equiv 3 \pmod 8$ satisfies $\left( \frac{83}{59} \right)=-1$, so $\text{End}(E) \cong \mathcal{O}(59,r) \cong \mathcal{O}$.

\end{example}

\subsection{$\mathbb{Z}[\sqrt{-cp}]$-oriented supersingular elliptic curves}

Let $c\neq p$ be a prime. A supersingular elliptic curve $E$ is $\mathbb{Z}[\sqrt{-cp}]$-oriented or $\mathbb{Z}[\frac{1+\sqrt{-cp}}{2}]$-oriented if and only if there exists a $c$-isogeny $\phi:E \to E^p$, where $E^p$ is defined by $p$-powering all of the coefficients in the defining equation of $E$. Let $G \subset E[c]$ be the kernel of $\phi$. Then there exists a dual isogeny $\hat{\phi}:E^p \to E$ with $\ker(\hat{\phi})=\pi(G)=G^p$, where $\pi$ is the $p$-powering Frobenius map.

By \cite[Theorem 1]{Arpin2203}, the set $\text{End}(E,G)=\{\theta \in \text{End}(E):\theta(G) \subseteq G\}$ is isomorphic to an Eichler order in $B_{p,\infty}$ of level $|G|$. We review the properties of $\text{End}(E,G)$, and give a way to compute $\text{End}(E)$ from $\text{End}(E,G)$ in this subsection.

Choose a prime $q$ such that
\begin{equation}\label{equation31}
\left \{
\begin{array}{ll} \vspace{1ex}
  q \equiv 3 \pmod 8, \quad \left( \frac{p}{q} \right) =-1, \quad  \left( \frac{c}{q} \right) = 1, & \text{if}  \ c \ \text{is odd}; \\
  q \equiv 7 \pmod 8, \quad \left( \frac{p}{q} \right) =-1, & \text{if} \ c=2.
\end{array} \right.
\end{equation}
Let $r$ be an integer such that
\begin{equation}\label{equation32}
r^2+cp \equiv 0 \pmod q.
\end{equation}
Besides, when $cp \equiv 3 \pmod 4$, we choose an integer $r'$ such that
\begin{equation}\label{equation33}
(r')^2+cp \equiv 0 \pmod {4q}.
\end{equation}

\begin{proposition}(\cite[Theorem 1]{xiao2023oriented})\label{zt5}
There exist $\alpha', \beta \in B_{p,\infty}$ such that $(\alpha')^2=-cp$, $\beta^2=-q$ and $\alpha' \beta =-\beta  \alpha'$. Moreover, we have $B_{p,\infty } \cong \mathbb{Q}+\mathbb{Q}\alpha' +\mathbb{Q} \beta +\mathbb{Q}\alpha' \beta$.
Denote notions as $(\ref{equation31})$, $(\ref{equation32})$ and $(\ref{equation33})$, the lattice $\mathcal{O}_c(q,r) = \mathbb{Z}+\mathbb{Z}\frac{1+\beta}{2}+\mathbb{Z}\frac{\alpha'(1+\beta)}{2} +\mathbb{Z}\frac{(r+\alpha')\beta}{q}$ is an Eichler order in $B_{p, \infty}$ of level $c$. Moreover, if $cp \equiv 3 \pmod 4$, the lattice $\mathcal{O}'_c(q,r') = \mathbb{Z}+\mathbb{Z}\frac{1+\alpha'}{2}+\mathbb{Z}{\beta} +\mathbb{Z}\frac{(r'+\alpha')\beta}{2q}$ is also an Eichler order in $B_{p, \infty}$ of level $c$.
\end{proposition}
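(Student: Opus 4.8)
The plan is to separate the statement into two parts: (i) realizing $B_{p,\infty}$ as the quaternion algebra $\left(\frac{-cp,\,-q}{\mathbb{Q}}\right)$, which simultaneously produces the elements $\alpha',\beta$ and the decomposition $B_{p,\infty}\cong\mathbb{Q}+\mathbb{Q}\alpha'+\mathbb{Q}\beta+\mathbb{Q}\alpha'\beta$; and (ii) showing that each of the two displayed lattices is an order whose reduced discriminant equals $cp$, from which the Eichler property and the level $c$ follow at once.

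For part (i) I would set $A=\left(\frac{-cp,\,-q}{\mathbb{Q}}\right)$, spanned by $1,\alpha',\beta,\alpha'\beta$ with $(\alpha')^2=-cp$, $\beta^2=-q$ and $\alpha'\beta=-\beta\alpha'$, and prove $A\cong B_{p,\infty}$ by checking that $A$ ramifies at exactly the places $\{p,\infty\}$; since a quaternion algebra over $\mathbb{Q}$ is determined by its set of ramified places, this suffices. I would compute the Hilbert symbol $(-cp,-q)_v$ place by place. At $\infty$ both entries are negative, so it is $-1$. At $p$ one has $(-cp,-q)_p=\left(\frac{-q}{p}\right)$, and because $q\equiv 3\pmod 4$ (which follows from $q\equiv 3$ or $7\pmod 8$) together with $\left(\frac{p}{q}\right)=-1$, quadratic reciprocity gives $\left(\frac{-q}{p}\right)=-1$. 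At $2$ the hypothesis $q\equiv 3\pmod 8$ (resp. $q\equiv 7\pmod 8$ when $c=2$) makes $-q\equiv 1\pmod 4$ (resp. $-q\equiv 1\pmod 8$), which trivializes the $2$-adic symbol; at $q$ the conditions $\left(\frac{c}{q}\right)=1$, $\left(\frac{p}{q}\right)=-1$ (resp. $\left(\frac{2}{q}\right)=1$ when $c=2$) yield $(-cp,-q)_q=\left(\frac{-cp}{q}\right)=1$; and at every remaining odd prime $\ell\notin\{p,q,c\}$ both entries are units, so the symbol is $1$. The only place not covered directly is $\ell=c$ (for $c$ odd), where no hypothesis on $q$ controls the symbol; here I would appeal to Hilbert reciprocity — the number of ramified places is even, and we have already pinned down two ramified places $p,\infty$ with all others except possibly $c$ unramified, so $c$ must be unramified too. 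Hence $A$ ramifies exactly at $\{p,\infty\}$ and $A\cong B_{p,\infty}$.

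For part (ii) I would handle $\mathcal{O}_c(q,r)$ and $\mathcal{O}'_c(q,r')$ in parallel. First I would check each is an order: every generator has integral reduced trace and reduced norm — for instance $\text{nrd}\!\left(\tfrac{(r+\alpha')\beta}{q}\right)=\tfrac{r^2+cp}{q}\in\mathbb{Z}$ by $(\ref{equation32})$ and $\text{nrd}\!\left(\tfrac{1+\beta}{2}\right)=\tfrac{1+q}{4}\in\mathbb{Z}$ since $q\equiv 3\pmod 4$, while for $\mathcal{O}'_c(q,r')$ the integrality of $\tfrac{1+\alpha'}{2}$ and $\tfrac{(r'+\alpha')\beta}{2q}$ uses $cp\equiv 3\pmod 4$ and $(\ref{equation33})$ — and closure under multiplication is verified by expanding all pairwise products in the given $\mathbb{Z}$-basis, using $(\alpha')^2=-cp$, $\beta^2=-q$, $\alpha'\beta=-\beta\alpha'$ and the congruence on $r$ (resp. $r'$). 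Next I would compute the discriminant relative to the standard order $\mathcal{O}_0=\mathbb{Z}+\mathbb{Z}\alpha'+\mathbb{Z}\beta+\mathbb{Z}\alpha'\beta$, whose trace form is diagonal with $\operatorname{disc}(\mathcal{O}_0)=(4cpq)^2$. The change-of-basis matrix to the basis of $\mathcal{O}_c(q,r)$ (and likewise for $\mathcal{O}'_c(q,r')$) has determinant $\pm\tfrac{1}{4q}$, so $\mathcal{O}_0$ sits inside with index $4q$ and $\operatorname{disc}(\mathcal{O}_c(q,r))=(4cpq)^2/(4q)^2=(cp)^2$, i.e. the reduced discriminant is $cp$. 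Since $c\neq p$ is prime (or $c=1$), $cp$ is squarefree, so by the standard dictionary between hereditary quaternion orders and Eichler orders of squarefree level (\cite[Chapter 23]{MR4279905}) the lattice is an Eichler order; as $B_{p,\infty}$ has reduced discriminant $p$, its level is $cp/p=c$.

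I expect the main obstacle to lie in two spots. In part (i) the delicate point is the prime $\ell=c$: there is no direct hypothesis on $\left(\frac{-q}{c}\right)$, so one genuinely must invoke the product formula rather than a local computation. In part (ii) the laborious and error-prone step is the closure-under-multiplication check: the products involving $\tfrac{(r+\alpha')\beta}{q}$ (resp. $\tfrac{(r'+\alpha')\beta}{2q}$) require the congruence $(\ref{equation32})$ (resp. $(\ref{equation33})$) to clear the denominator, and one must carefully track the $2$-adic denominators introduced by $\tfrac{1+\beta}{2}$ and $\tfrac{1+\alpha'}{2}$, which is exactly where the residue of $q$ modulo $8$ enters. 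Once closure is established, the discriminant computation and the passage to ``hereditary, hence Eichler of level $c$'' are routine.
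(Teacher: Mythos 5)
The paper does not prove this proposition at all: it is imported verbatim from the authors' earlier work \cite[Theorem 1]{xiao2023oriented}, so there is no internal proof to compare against, and your argument must stand on its own — which it does. Both halves are correct. The place-by-place Hilbert symbol computation correctly pins down the ramification set $\{p,\infty\}$: the symbol is $-1$ at $\infty$; at $p$ one gets $\left(\frac{-q}{p}\right)=-1$ from $q\equiv 3\pmod 4$, $\left(\frac{p}{q}\right)=-1$ and quadratic reciprocity; at $2$ the symbol is trivial because $-q\equiv 1\pmod 4$ (and $-q\equiv 1\pmod 8$ when $c=2$); at $q$ the hypotheses give $\left(\frac{-cp}{q}\right)=\left(\frac{-1}{q}\right)\left(\frac{c}{q}\right)\left(\frac{p}{q}\right)=(-1)(1)(-1)=1$; all remaining places are unramified, and the product formula then closes the place $c$. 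The lattice computation is also right: $\mathcal{O}_0=\mathbb{Z}+\mathbb{Z}\alpha'+\mathbb{Z}\beta+\mathbb{Z}\alpha'\beta$ has discriminant $(4cpq)^2$, each displayed lattice contains $\mathcal{O}_0$ with index $4q$, so the reduced discriminant is $cp$, which is squarefree since $c\neq p$; the hereditary/Eichler dictionary of \cite[Chapter 23]{MR4279905} then gives an Eichler order of level $cp/p=c$. Two remarks. First, your claim that the place $c$ \emph{genuinely} requires the product formula is overstated: $\left(\frac{-q}{c}\right)=1$ also follows directly from $\left(\frac{c}{q}\right)=1$, $q\equiv 3\pmod 4$ and quadratic reciprocity, so either route works. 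Second, the one step you leave schematic — closure of the two lattices under multiplication — is exactly where being an order (rather than merely a lattice of elements with integral trace and norm) is established, and it is where the congruences (\ref{equation32}) and (\ref{equation33}) are actually consumed; since you identify this point and describe the correct mechanism, it is a routine verification rather than a gap.
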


\begin{proposition}\cite[Theorem 4]{xiao2023oriented}
Let $c$ be a prime with $c < 3p/16$.
If $E$ is a $\mathbb{Z}[\sqrt{-cp}]$-oriented (resp. $\mathbb{Z}[\frac{1+\sqrt{-cp}}{2}]$-oriented) supersingular elliptic curve, then there exists a $c$-isogeny $\phi: E \to E^p$ with $\ker(\phi)=G$ and $\operatorname{End}(E,G)$ is isomorphic to some $\mathcal{O}_c(q,r)$ (resp. $\mathcal{O}'_c(q,r')$).
\end{proposition}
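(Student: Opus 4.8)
The statement has two parts: the existence of the degree-$c$ isogeny $\phi\colon E\to E^p$, and the identification of $\operatorname{End}(E,G)$ with one of the explicit Eichler orders $\mathcal{O}_c(q,r)$ or $\mathcal{O}'_c(q,r')$. The plan is to treat the existence as a factorization of the orientation endomorphism through Frobenius, and then to pin down the Eichler order by an optimal-embedding argument in the style of Ibukiyama, reducing everything to Proposition \ref{zt5}.

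First I would produce the isogeny. The orientation supplies an embedding $\iota\colon\mathbb{Z}[\sqrt{-cp}]\hookrightarrow\operatorname{End}(E)$, so $\theta:=\iota(\sqrt{-cp})$ is an endomorphism with $\theta^2=-cp$ and reduced norm $\operatorname{nrd}(\theta)=cp$. Since $c\neq p$ is prime, $p$ divides $cp$ exactly once, and because $E$ is supersingular the prime $p$ ramifies in $B_{p,\infty}$; the factorization of the left ideal $\operatorname{End}(E)\theta$ at $p$ therefore forces the inseparable part of $\theta$ to be the $p$-power Frobenius $\pi\colon E\to E^p$, so that $\theta=\lambda\circ\pi$ with $\lambda\colon E^p\to E$ separable of degree $c$. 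Dualizing and using $\bar\theta=-\theta$ gives $\theta=-\hat\pi\circ\hat\lambda$, so $\phi:=\hat\lambda\colon E\to E^p$ is a $c$-isogeny and $G:=\ker\phi\subseteq E[c]$. Here the hypothesis $c<3p/16$ is what guarantees that the orientation is realized by such a $c$-isogeny to $E^p$ with cyclic kernel $G$, so that $\operatorname{End}(E,G)$ is well defined.

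Next I would locate the orientation inside $\operatorname{End}(E,G)$. From $\theta=-\hat\pi\circ\hat\lambda=-\hat\pi\circ\phi$ one checks directly that $\theta(G)=-\hat\pi(\phi(G))=-\hat\pi(0)=0\subseteq G$, hence $\theta\in\operatorname{End}(E,G)$ and $\mathbb{Z}[\sqrt{-cp}]\subseteq\operatorname{End}(E,G)$; in the $\mathbb{Z}[\frac{1+\sqrt{-cp}}{2}]$-oriented case the same computation shows $\frac{1+\theta}{2}\in\operatorname{End}(E,G)$. By \cite[Theorem 1]{Arpin2203} we already know $\operatorname{End}(E,G)$ is an Eichler order of level $|G|=c$ in $B_{p,\infty}$, and by Proposition \ref{zt5} so are $\mathcal{O}_c(q,r)$ and $\mathcal{O}'_c(q,r')$, each of which optimally contains $\sqrt{-cp}$ (resp. $\frac{1+\sqrt{-cp}}{2}$). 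Thus the problem reduces to a purely quaternionic statement: an Eichler order of level $c$ optimally containing $\mathbb{Z}[\sqrt{-cp}]$ (resp. $\mathbb{Z}[\frac{1+\sqrt{-cp}}{2}]$) is conjugate to $\mathcal{O}_c(q,r)$ (resp. $\mathcal{O}'_c(q,r')$).

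Finally I would carry out this identification. Fix the presentation $B_{p,\infty}=\mathbb{Q}+\mathbb{Q}\alpha'+\mathbb{Q}\beta+\mathbb{Q}\alpha'\beta$ with $(\alpha')^2=-cp$, $\beta^2=-q$, $\alpha'\beta=-\beta\alpha'$, choosing the auxiliary prime $q$ subject to the congruences (\ref{equation31}); after conjugating we may assume the embedded $\sqrt{-cp}$ equals $\alpha'$. One then writes a general element of an Eichler order of level $c$ containing $\alpha'$ in these coordinates and uses the level-$c$ condition together with integrality of reduced traces and norms to force the remaining basis vectors into the shape $\frac{1+\beta}{2}$, $\frac{\alpha'(1+\beta)}{2}$, $\frac{(r+\alpha')\beta}{q}$, with $r$ determined modulo $q$ by (\ref{equation32}); the residue of $cp$ modulo $4$ selects between the two families and produces $r'$ via (\ref{equation33}) in the remaining case. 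I expect this last step — the local analysis at $c$ and at $q$ that pins the order down up to conjugacy — to be the main obstacle, since it is precisely the Eichler-order refinement of Ibukiyama's classification and requires verifying that the optimal embedding of $\mathbb{Z}[\sqrt{-cp}]$ is unique up to the action of the normalizer.
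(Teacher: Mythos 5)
Your first two steps are correct, but they are the easy part of the statement. The factorization $\theta=\lambda\circ\pi$ of the orientation through Frobenius is valid (a supersingular curve has no $p$-torsion points, so the inseparable degree of $\theta$ is exactly $p$), taking $\phi=\hat\lambda$ produces the $c$-isogeny, and $\theta(G)=\{\infty\}\subseteq G$ gives $\mathbb{Z}[\sqrt{-cp}]\subseteq\operatorname{End}(E,G)$, which is an Eichler order of level $c$ by \cite[Theorem 1]{Arpin2203}. (Your claim that "the same computation" handles $\frac{1+\theta}{2}$ is glib but repairable: $\frac{1+\theta}{2}(G)$ is a homomorphic image of $G$, hence a subgroup of $E[c]$, and since $c$ is odd in that case one concludes $\frac{1+\theta}{2}(G)\subseteq G$.) The genuine gap is the step you explicitly defer to the end: proving that every Eichler order of level $c$ in $B_{p,\infty}$ optimally containing $\mathbb{Z}[\sqrt{-cp}]$ (resp. $\mathbb{Z}[\frac{1+\sqrt{-cp}}{2}]$) is conjugate to some $\mathcal{O}_c(q,r)$ (resp. $\mathcal{O}'_c(q,r')$). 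Announcing that you "expect this last step to be the main obstacle" is an accurate self-assessment, not an argument: your sketch does not show that a prime $q$ satisfying (\ref{equation31}) exists and is realized by a trace-zero element of the given order, does not show that the integrality constraints force the four specific generators of Proposition \ref{zt5} rather than some other level-$c$ lattice, and does not address independence of the choice of $r$ or $r'$. That classification is the Eichler-level refinement of Ibukiyama's theorem \cite{MR683249}, and it is the actual content of \cite[Theorem 4]{xiao2023oriented}; note that the present paper does not reprove it either, but imports the statement wholesale from that reference, so the whole burden of a self-contained proof falls on exactly the step you skipped.

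A second, more localized error: you misplace the one quantitative hypothesis. You assert that $c<3p/16$ "is what guarantees that the orientation is realized by such a $c$-isogeny to $E^p$ with cyclic kernel $G$." It is not: your own factorization argument works verbatim for every prime $c\neq p$, and the kernel of a separable isogeny of prime degree is automatically cyclic. As written, your proof never uses $c<3p/16$ at all, which is a reliable symptom that the essential difficulty has been bypassed rather than resolved. Since the bound is a hypothesis of \cite[Theorem 4]{xiao2023oriented} and the parts you did prove do not need it, it must be consumed precisely in the omitted quaternionic classification (where the size restriction on $c$ is what rigidifies the position of the embedded $\sqrt{-cp}$ inside a level-$c$ Eichler order); any correct write-up would have to make that use of the bound explicit.
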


Let $\Lambda(q)=(\chi_1(q),\chi_2(q),\delta(q))$ (resp. $\Lambda'(q)=(\chi_1(q),\chi_2(q))$) be the assigned values of $q$ associated with $-16cp$ (resp. $-cp$), where $\chi_1(q)=\left( \frac{q}{p} \right)$, $\chi_2(q)=\left( \frac{q}{c} \right)$ and $\delta(q)=(-1)^{(q-1)/2}=-1$. The binary quadratic forms with associated values $\Lambda(q)$ (resp. $\Lambda'(q)$) are called in the genus class $\Lambda(q)$ (resp. $\Lambda'(q)$). Note that the binary quadratic form $\rho$ is in the genus class $\Lambda(q)$ (resp. $\Lambda'(q)$) if and only if $\rho^{-1}$ is also in the genus class $\Lambda(q)$ (resp. $\Lambda'(q)$).

\begin{proposition}\cite[Proposition 4.1]{xiao2023oriented}
Given an Eichler order $\mathcal{O}_c(q,r)$ (resp. $\mathcal{O}'_c(q,r')$), it can represent a reduced quadratic form $\rho$ or $\rho^{-1}$ in the genus class $\Lambda(q)$ (resp. $\Lambda'(q)$) respectively.
\end{proposition}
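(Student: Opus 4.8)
The plan is to follow the proof of Proposition~\ref{zt1} almost verbatim, with $p$ replaced by $cp$ and the maximal order replaced by the Eichler order, and then to recover the genus from a single value that the resulting form represents. First I would make the attached binary form explicit. Since $\alpha'\beta=-\beta\alpha'$, the plane $\mathbb{Q}\beta+\mathbb{Q}\alpha'\beta$ is the orthogonal complement of $\langle 1,\alpha'\rangle$ for the reduced trace pairing, and its intersection with $\mathcal{O}_c(q,r)$ is a rank-two lattice on which the reduced norm is built from $\mathrm{nrd}(\beta)=q$, $\mathrm{nrd}(\alpha')=cp$ and a cross term governed by $r$. As in Kaneko's construction underlying Proposition~\ref{zt1}, after the normalization that forces the reciprocal to be improperly primitive this produces the form $(q,4r,\tfrac{4r^2+4cp}{q})$, whose discriminant is $(4r)^2-4q\cdot\tfrac{4r^2+4cp}{q}=-16cp$; integrality of the last coefficient is exactly congruence~\eqref{equation32}. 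For $\mathcal{O}'_c(q,r')$ the same scheme, now using~\eqref{equation33}, yields $(q,r',\tfrac{(r')^2+cp}{4q})$ of discriminant $-cp$.

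Next I would reduce the form and account for the sign ambiguity. Reduction selects a unique reduced representative in each $\mathrm{SL}_2(\mathbb{Z})$-class, but the order only determines the form up to $\mathrm{GL}_2(\mathbb{Z})$-equivalence, because the orientation is pinned down only up to the conjugation $\alpha'\mapsto-\alpha'$ (equivalently $\beta\mapsto-\beta$), which replaces $(a,b,c)$ by $(a,-b,c)$. Hence the order determines the reduced form $\rho$ or its opposite $\rho^{-1}$, as stated; by the remark preceding the proposition these two lie in the same genus, so it suffices to locate one of them.

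It then remains to identify the genus. The decisive observation is that the constructed form primitively represents its leading coefficient $q$, a prime coprime to $16cp$ (indeed $q\neq 2,c,p$). Therefore its genus characters are simply the Legendre symbols of $q$: the character at $p$ is $\left(\tfrac{q}{p}\right)=\chi_1(q)$, the character at $c$ is $\left(\tfrac{q}{c}\right)=\chi_2(q)$, and the sign character is $(-1)^{(q-1)/2}=\delta(q)=-1$ since $q\equiv 3\pmod 8$. These are exactly the defining data of $\Lambda(q)$ (resp.\ $\Lambda'(q)$ in the primed case), so $\rho$ or $\rho^{-1}$ lies in that genus class. The residue conditions $\left(\tfrac{p}{q}\right)=-1$ and $\left(\tfrac{c}{q}\right)=1$ from~\eqref{equation31} are what guarantee, via the Hilbert symbol, that $\alpha',\beta$ of these norms generate $B_{p,\infty}$ and that $r$ exists; through quadratic reciprocity they also fix the numerical values of $\chi_1(q)$ and $\chi_2(q)$.

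I expect the main obstacle to be bookkeeping rather than any isolated hard step: pinning down the correct rank-two sublattice so that the discriminant comes out as $-16cp$ with an improperly primitive reciprocal (rather than the naive $-4cp$), and carrying the parity case distinctions consistently---$c$ odd versus $c=2$, and $\mathcal{O}_c(q,r)$ versus $\mathcal{O}'_c(q,r')$ according to $cp\bmod 4$---through the norm and reciprocity computations. Once the form is correctly normalized, membership in $\Lambda(q)$ is immediate from the fact that $q$ is represented.
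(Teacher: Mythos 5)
This proposition is imported verbatim from \cite[Proposition 4.1]{xiao2023oriented}; the paper itself gives no proof of it, so the only internal benchmarks are the parallel Proposition~\ref{zt1} (the $c=1$ case, proved by citing Kaneko's explicit form) and the data reused in Proposition~\ref{p52}. Your route---write down an explicit binary form as the norm form on a rank-two sublattice of the Eichler order, observe that it represents the prime $q$, and read off the genus characters at that represented value---is exactly the route those results take. Your forms $(q,4r,\tfrac{4r^2+4cp}{q})$ and $(q,r',\tfrac{(r')^2+cp}{4q})$, with integrality coming from \eqref{equation32} and \eqref{equation33}, agree with the forms the paper attaches to these orders, and the genus identification is indeed essentially by definition: $\Lambda(q)$ (resp.\ $\Lambda'(q)$) is defined as the tuple of assigned character values at $q$, the constructed form represents $q$, and $q$ is coprime to the discriminant by \eqref{equation31}. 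The $\rho$-versus-$\rho^{-1}$ ambiguity, explained via the inner automorphism $\beta\mapsto-\beta$ (conjugation by $\alpha'$) changing only the $\operatorname{SL}_2(\mathbb{Z})$-orientation of the lattice basis, is also handled correctly.

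The one step you leave genuinely unresolved is the lattice normalization in the unprimed case, and your description of it is off. The intersection $\mathcal{O}_c(q,r)\cap(\mathbb{Q}\beta+\mathbb{Q}\alpha'\beta)$ equals $\mathbb{Z}\beta+\mathbb{Z}\tfrac{(r+\alpha')\beta}{q}$, whose norm form is $(q,2r,\tfrac{r^2+cp}{q})$ of discriminant $-4cp$, not $-16cp$; the form of discriminant $-16cp$ is the norm form on the index-two sublattice $\mathbb{Z}\beta+\mathbb{Z}\tfrac{2(r+\alpha')\beta}{q}$, which is precisely the lattice the paper works with in Proposition~\ref{p52} (there $\beta_1=x_1\beta+z_1\tfrac{2(r+\alpha')\beta}{q}$). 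Your phrase ``the normalization that forces the reciprocal to be improperly primitive'' does not supply this: \emph{reciprocal} and \emph{improperly primitive} are ternary-form notions in this paper (Section~3) and have no meaning for the binary form being constructed here. The gap is repairable by simply exhibiting that sublattice, and note that in the primed case no rescaling is needed at all, since there the naive intersection already has discriminant $-cp$.
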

In the following, we show that the binary quadratic form $\rho$ or $\rho^{-1}$ can be represented by a ternary quadratic form, which corresponds to a maximal order in $B_{p,\infty}$.
\begin{theorem}\label{zt6}
Given an Eichler order $\mathcal{O}_c(q,r)$ or $\mathcal{O}'_c(q,r')$, one can compute a maximal order in $B_{p,\infty}$ which contains it by solving two square roots in $\mathbb{F}_c$.
\end{theorem}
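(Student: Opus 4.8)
The plan is to run exactly the binary-to-ternary-to-maximal-order pipeline of Theorem \ref{zt2}, but now carrying the extra prime $c$, which is precisely where the square roots in $\mathbb{F}_c$ enter. First I would invoke the preceding proposition (\cite[Proposition 4.1]{xiao2023oriented}) to replace the given Eichler order by a concrete reduced binary quadratic form: $\mathcal{O}_c(q,r)$ produces a primitive form $\rho=(a,2t,b)$ of discriminant $-16cp$ lying in the genus class $\Lambda(q)$, while $\mathcal{O}'_c(q,r')$ produces a form $\rho=4\rho'$ of discriminant $-16cp$ whose primitive part $\rho'$ has discriminant $-cp$ (here necessarily $c$ is odd, since $cp\equiv 3\pmod 4$).

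Next I would fix the Dickson invariants $\Omega=2p$, $\Delta=1$ and $C=2c$ (taking $C=4$ instead when $c=2$), so that $-4\Omega C=-16cp$ matches $\operatorname{disc}(\rho)$ and $\gcd(C,\Delta)=1$. With these choices the congruences $(\ref{equation2})$ read $R^2\equiv -a$, $S^2\equiv -b$ and $RS\equiv t \pmod{2c}$. Solving them modulo $2$ is trivial, and modulo $c$ it reduces to extracting the two square roots $R^2\equiv -a$ and $S^2\equiv -b$ in $\mathbb{F}_c$; the compatibility $RS\equiv t$ is then enforced by a sign choice on $S$, using that $t^2\equiv ab\pmod c$ follows from $c\mid\operatorname{disc}(\rho)$. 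This is the only place square roots are needed, which is the source of the claimed two square roots in $\mathbb{F}_c$.

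Having solved the congruences, I would apply Proposition \ref{Dick5} (with the $C=4$ clause when $c=2$) for $\mathcal{O}_c(q,r)$, or Proposition \ref{Dick6} for $\mathcal{O}'_c(q,r')$, to represent $\rho$ properly by a positive properly primitive ternary form $f$ with invariants $\Omega=2p$ and $\Delta=1$; Proposition \ref{Dick8} guarantees that $f$ is the unique such form in its class. Exactly as in Theorem \ref{zt2}, the reciprocal $F$ of $f$ is improperly primitive of discriminant $p$, so by the Brandt--Sohn correspondence (Proposition \ref{BS1}) together with the even Clifford algebra recipe of Section 2.3 I can write down an explicit maximal order $\mathcal{O}$ in $B_{p,\infty}$.

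The remaining, and most delicate, step is to check that this $\mathcal{O}$ actually contains the Eichler order we started from, rather than being merely a maximal order of the correct genus. Here I would use the inclusions $\mathbb{Z}[\sqrt{-cp}]\subseteq\mathcal{O}_c(q,r)\subseteq\operatorname{End}(E)$, so that the form $\rho$ attached to $\mathcal{O}_c(q,r)$ is the same form produced by the quadratic suborder sitting inside $\operatorname{End}(E)$; combined with the uniqueness in Proposition \ref{Dick8} and the fact that $\rho$ and $\rho^{-1}$ share the genus class $\Lambda(q)$, this should identify the Brandt--Sohn maximal order of $F$ with the type of $\operatorname{End}(E)$, which contains $\mathcal{O}_c(q,r)$. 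Pinning down this identification, i.e. showing that the computed maximal order is one of the two maximal orders whose intersection is the given level-$c$ Eichler order and not some other representative, is where I expect the real difficulty to lie.
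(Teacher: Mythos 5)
Your proposal follows essentially the same route as the paper's proof: the same Dickson invariants $C=2c$, $\Omega=2p$, $\Delta=1$, solvability of the congruences $(\ref{equation2})$ via the genus class $\Lambda(q)$ (costing exactly the two square roots $R^2\equiv -a$, $S^2\equiv -b$ in $\mathbb{F}_c$), Propositions \ref{Dick5}/\ref{Dick6} and \ref{Dick8}, and then the Brandt--Sohn correspondence applied to the reciprocal form of discriminant $p$. The differences are minor: where you enforce $RS\equiv t$ by a sign flip on $S$, the paper instead allows the construction to output a form representing $\rho$ or $\rho^{-1}$ and verifies by explicit unimodular equivalences that the resulting ternary form is unique, including the degenerate cases $c\mid t$ and $c=2$ (where $C=4$ and Proposition \ref{Dick8} does not apply, a point your uniqueness claim glosses over); and the containment of the Eichler order in the computed maximal order, which you flag as the ``real difficulty,'' is in fact not treated any more explicitly in the paper's own proof than in yours.
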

\begin{proof}
Given an Eichler order $\mathcal{O}_c(q,r)$, we can assume that it can represent a binary quadratic form $\rho=(a, t',b)$ or $\rho^{-1}=(a, -t',b)$ with discriminant $-16cp$. We can write $t'=2t$ since $t'$ is even.

As we know, the forms $\rho$ and $\rho^{-1}$ are in the genus class $\Lambda(q)$. We have $(2t)^2-4ab=-16cp$. Let $C=2c$, $\Omega=2p$ and $\Delta=1$. We need to prove that the congruences in (\ref{equation2}) are solvable.

Since the forms $\rho$ and $\rho^{-1}$ are in the genus class $\Lambda(q)$, the equations $R^2 \equiv -a \pmod {C}$ and $S^2 \equiv -b \pmod{C}$ are solvable. We can assume that there exist $A, B \in \mathbb{Z}$ satisfying $BC-R^2=a$ and $AC-S^2=b$. Since $4 \mid a$ or $a \equiv 3 \pmod 4$, we have $4 \mid BC$. Similarly, $4 \mid AC$. We have
$$\begin{array}{ll}
  2pC&=ab-t^2 =(BC-R^2)(AC-S^2)-t^2  \\
   & =ABC^2-ACR^2-BCS^2+(RS)^2-t^2.
\end{array}$$

If $c$ is an odd prime, then $A$ and $B$ are even integers.
So $C \mid (RS)^2-t^2$. Moreover, we have $2 \mid (RS)^2-t^2$ which means $2 \mid RS\pm t$.

We have $c \mid RS-t$ or $c \mid RS+t$.
If only one of $RS-t \equiv 0 \pmod{2c}$ and $RS+t \equiv 0 \pmod{2c}$ holds, then the binary quadratic form $\rho$ or $\rho^{-1}$ can be represented properly by a ternary quadratic form $f$ by Proposition \ref{Dick5}. Moreover, the reciprocal $F$ of $f$ is improperly primitive with discriminant $p$. The form $F$ is unique in the equivalent class by Proposition \ref{Dick8}.

If $c \mid RS-t$ and $c \mid RS+t$, then $c \mid t$ and $c \mid RS$. It follows $c \mid ab$. We can assume $c \mid a$. Without loss of generality, we can assume $2\nmid a$, so we can choose $R=c$ and $T=(RS\pm t)/C=S/2 \pm t/C$. The two ternary quadratic forms $F_1(X_1,X_2,X_3)=AX^2_{1}+BX^2_{2}+CX^2_{3}+CX_2X_3+2SX_{1}X_3+(S+ 2t/C)X_{1}X_2$ and $F_2(X_1,X_2,X_3)=AX^2_{1}+BX^2_{2}+CX^2_{3}+CX_2X_3+2SX_{1}X_3+(S- 2t/C)X_{1}X_2$ are equivalent since there exists $U \in \operatorname{GL}_3(\mathbb{Z})$ with $\det(U)=\pm 1$ such that $A_{F_1} = U A_{F_2} U^{\rm t}$. Similar results hold for $2 \mid a$.

In the following, we assume $c=2$. We have $8 \mid (RS)^2-t^2$.
If $2 \nmid t$, then $2 \nmid ab$ and $2 \nmid RS$. Only one of $RS+t$ and $RS-t$ can be divided by $4$.

If $ 2 \mid t$, then $4 \mid ab$. Since $R$ satisfies $R^2+a \equiv 0 \pmod 4$, we can choose $R=0$ or $R=2$. If $R=0$, then $2T=\pm 2t /C =\pm t/2$. The two ternary forms $F_1(X_1,X_2,X_3)=AX^2_{1}+BX^2_{2}+4X^2_{3}+2SX_{1}X_3+t/2X_{1}X_2$ and $ F_2(X_1,X_2,X_3)=AX^2_{1}+BX^2_{2}+4X^2_{3}+2SX_{1}X_3-t/2X_{1}X_2$ are equivalent. If $R=2$, then $2T=2(RS\pm t)/C=S\pm t/2$. The ternary forms $F_1(X_1,X_2,X_3)=AX^2_{1}+BX^2_{2}+4X^2_{3}+4X_2X_3+2SX_{1}X_3+(S+ t/2)X_{1}X_2$ and $F_2(X_1,X_2,X_3)=AX^2_{1}+BX^2_{2}+4X^2_{3}+4X_2X_3+2SX_{1}X_3+(S-t/2)X_{1}X_2$ are equivalent.

%

In all cases, we get a unique ternary quadratic form in the equivalent class which can represent the form $\rho$ or $\rho^{-1}$. Moreover, its reciprocal form corresponds to a maximal order in $B_{p,\infty}$. Similar results hold for Eichler order $\mathcal{O}'_c(q,r')$.
\end{proof}

\begin{theorem}\label{zt7}
Let $E$ be a supersingular elliptic curve over $\mathbb{F}_{p^2}$ whose endomorphism ring $\operatorname{End}(E)$ corresponds to a ternary quadratic form $F$ with discriminant $p$. Let $c$ be a prime with $c < 3p/16$. The form $F$ can represent $2c$ properly if and only if $E$ is $\mathbb{Z}[\sqrt{-cp}]$-oriented or $\mathbb{Z}[\frac{1+\sqrt{-cp}}{2}]$-oriented.
\end{theorem}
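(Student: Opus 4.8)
The plan is to mirror the proof of Theorem \ref{zt3}, replacing the property that $E$ is defined over $\mathbb{F}_p$ by the property that $E$ is $\mathbb{Z}[\sqrt{-cp}]$-oriented or $\mathbb{Z}[\frac{1+\sqrt{-cp}}{2}]$-oriented, and the value $C=2$ by $C=2c$. The backbone is the chain of equivalences: $E$ is oriented if and only if there is a $c$-isogeny $\phi\colon E\to E^p$ with kernel $G$; if and only if $\operatorname{End}(E,G)$ is an Eichler order of level $c$ inside $\operatorname{End}(E)$ isomorphic to $\mathcal{O}_c(q,r)$ or $\mathcal{O}'_c(q,r')$; if and only if the ternary form $F$ represents $2c$ properly. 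The first equivalence is the definition of being oriented; the second is \cite[Theorem 4]{xiao2023oriented} together with \cite[Theorem 1]{Arpin2203}; the last is the content of Theorem \ref{zt6} read in both directions through Dickson reciprocity.

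For the implication that an orientation forces $F$ to represent $2c$ properly, I would argue as follows. Since $c<3p/16$, \cite[Theorem 4]{xiao2023oriented} yields a $c$-isogeny $\phi\colon E\to E^p$ with kernel $G$ and an isomorphism of $\operatorname{End}(E,G)$ with $\mathcal{O}_c(q,r)$ or $\mathcal{O}'_c(q,r')$. Applying Theorem \ref{zt6} to this Eichler order produces the ternary form $F$ of discriminant $p$ attached to the maximal order that contains it, namely $\operatorname{End}(E)$. As recorded in the proof of Theorem \ref{zt6}, where $C=2c$, the reciprocal form carries $2c$ as the coefficient of its $X_3^2$-term, so that $F(0,0,1)=2c$ is a proper representation.

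For the converse I would reverse the Dickson reciprocal construction. After a unimodular change of variables one may assume that $F$ represents $2c$ by the primitive vector $(0,0,1)$, so that the coefficient of $X_3^2$ in $F$ equals $C=2c$. The cofactors $BC-R^2$, $RS-TC$ and $AC-S^2$ of the matrix of $F$ then furnish, through equations (\ref{equation3}) and (\ref{equation4}) with $\Omega=2p$ and $\Delta=1$, a positive binary form $\rho=(a,2t,b)$ of discriminant $(2t)^2-4ab=-4\Omega C=-16cp$ which is properly represented by the reciprocal $f$ of $F$; here \cite[Theorem 3]{Dickson1930} and the primitivity of $f$ ensure that reciprocation is an involution respecting equivalence. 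This $\rho$, or its inverse $\rho^{-1}$, lies in the genus class attached to $\operatorname{End}(E)$, so by the correspondence between Eichler orders and binary quadratic forms in \cite{xiao2023oriented} the maximal order $\operatorname{End}(E)$ contains an Eichler order of level $c$ isomorphic to $\mathcal{O}_c(q,r)$ or $\mathcal{O}'_c(q,r')$. Such an order contains $\mathbb{Z}[\sqrt{-cp}]$, as one checks by recovering $\alpha'$ from the basis in Proposition \ref{zt5}, so $\operatorname{End}(E)$ admits an embedding of $\mathbb{Z}[\sqrt{-cp}]$; by the framework of \cite[Theorem 4]{xiao2023oriented} this embedding is realized by a $c$-isogeny $E\to E^p$, and therefore $E$ is oriented.

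The main obstacle lies in the converse, where I must verify that the binary form $\rho$ extracted from a proper representation of $2c$ has the correct arithmetic type: it should be primitive, or four times a primitive form, according to whether one lands in the $\mathcal{O}_c(q,r)$ or the $\mathcal{O}'_c(q,r')$ regime (governed by $cp$ modulo $4$), and it must lie in the correct genus class $\Lambda(q)$ or $\Lambda'(q)$ so that it genuinely matches an Eichler order connecting $\operatorname{End}(E)$ to its Frobenius conjugate rather than an unrelated order. One must also confirm that the Eichler suborder so obtained has level exactly $c$ rather than a proper divisor; this is where the primality of $c$ and the bound $c<3p/16$ enter, the latter guaranteeing that the embedding of $\mathbb{Z}[\sqrt{-cp}]$ is realized by an actual $c$-isogeny to $E^p$. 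Concretely, the parity and integrality analysis carried out in the proof of Theorem \ref{zt6} --- distinguishing the subcases $c$ odd and $c=2$ and tracking which of $RS\pm t$ is divisible by $2c$ --- must be run in reverse to certify these primitivity and genus properties.
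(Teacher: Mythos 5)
Your first direction (an orientation forces $F$ to represent $2c$ properly) is exactly the paper's argument: invoke \cite[Theorem 4]{xiao2023oriented} to obtain the $c$-isogeny $\phi\colon E\to E^p$ and the Eichler order $\mathcal{O}_c(q,r)$ or $\mathcal{O}'_c(q,r')$, then feed the binary form it represents into Theorem \ref{zt6}, whose construction outputs a ternary form with $C=2c$ as the coefficient of $X_3^2$. The converse, however, contains a genuine gap, and you flag it yourself without closing it. Your plan is to extract from $F$ a binary form $\rho$ of discriminant $-16cp$ and then invoke ``the correspondence between Eichler orders and binary quadratic forms'' to produce an Eichler order of level $c$ inside $\operatorname{End}(E)$. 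But that correspondence (Proposition 4.1 of \cite{xiao2023oriented}, quoted in the paper) goes only one way: from an Eichler order to a binary form. No cited result manufactures, from a form of discriminant $-16cp$, an Eichler order of level $c$ sitting inside the \emph{given} maximal order $\operatorname{End}(E)$, and the natural source of such an Eichler order is $\operatorname{End}(E,G)$ for the kernel $G$ of a $c$-isogeny $E\to E^p$ --- i.e., it presupposes the orientation you are trying to prove. As written, this step is circular, and ``run the analysis of Theorem \ref{zt6} in reverse'' is a wish, not an argument.

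The paper's proof of this direction avoids Eichler orders and genus classes entirely. Once $F$ properly represents $2c$, one may write $F=AX_1^2+BX_2^2+2cX_3^2+2RX_2X_3+2SX_1X_3+2TX_1X_2$ by \cite[Theorem 10]{Dickson1930}; the maximal order $\mathcal{O}\cong\operatorname{End}(E)$ is then the even Clifford algebra of $F$, with explicit multiplication laws for the basis $1,i,j,k$ in terms of $A,B,c,R,S,T$. The paper exhibits the single element
\[
\alpha=(2i-R)(2j-S)+RS-2cT\in\mathcal{O},
\]
and verifies, using Dickson's relations (\ref{equation3}) with $\Delta=1$ and $C=2c$ (so $a=2cB-R^2$, $b=2cA-S^2$, $t=RS-2cT$, and $ab-t^2=4cp$), that $\alpha$ satisfies $x^2+4cp=0$. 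This yields the embedding of $\mathbb{Z}[\sqrt{-cp}]$ into $\operatorname{End}(E)$ directly --- and of $\mathbb{Z}[\frac{1+\sqrt{-cp}}{2}]$ in the case where the extracted binary form $\rho$ is $4$ times a primitive form --- which is precisely the orientation. This explicit element is the idea your proposal is missing. Note also that the hypothesis $c<3p/16$ is used only in the direction you did correctly (it is needed for \cite[Theorem 4]{xiao2023oriented}); the converse direction does not require it, contrary to where your sketch tries to spend it.
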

\begin{proof}
If the form $F$ can represent $2c$ properly, then we can write $F(X_1,X_2,X_3)=AX_1^2+BX_2^2+2cX_3^2+2RX_2X_3+2SX_1X_3+2TX_1X_2$ by \cite[Theorem 10]{Dickson1930}. The reciprocal $f$ of $F$ can be written as $f(x_1,x_2.x_3)=ax^2_{1}+bx^2_{2}+c'x^2_{3}+2rx_{2}x_3+2sx_{1}x_3+2tx_1x_{2}$. The form $f$ can represent a binary quadratic form $\rho=(a, 2t,b)$. By the relation between $f$ and $F$, we have $ab-t^2=4cp$, so the form $\rho$ has discriminant $ -16cp$.

Assume that $\rho$ is primitive. The form $F$ corresponds to a maximal order $\mathcal{O}=\mathbb{Z}+\mathbb{Z}i+\mathbb{Z}j+\mathbb{Z}k$, where the $i$, $j$, $k$ satisfy the following equations.
\begin{align}
 \begin{array}{ll}
  i^2=Ri-Bc/2 \quad & jk=A\bar{i}/2=A(R-i)/2 \vspace{0.5ex}  \\
  j^2=Sj-Ac/2 \quad & ki=B\bar{j}/2=B(S-j)/2 \vspace{0.5ex} \\
  k^2=Tk-AB/4 \quad & ij=c\bar{k}=c(T-k).
 \end{array}
\end{align}

Using equations in $(\ref{equation3})$ with $\Delta=1$, the element $\alpha=(2i-R)(2j-S)+RS-2cT\in \mathcal{O}$ satisfies the equation $x^2+4cp=0$, so the imaginary quadratic order $\mathbb{Z}[\alpha]=\mathbb{Z}[\sqrt{-cp}]$ can be embedded into $\text{End}(E)$.

If the form $\rho$ is not primitive, then $\rho/4$ is primitive. Similarly, the imaginary quadratic order $\mathbb{Z}[\frac{1+\sqrt{-cp}}{2}]$ can be embedded into $\text{End}(E)$.

On the contrary, if $E$ is a $\mathbb{Z}[\sqrt{-cp}]$-oriented or $\mathbb{Z}[\frac{1+\sqrt{-cp}}{2}]$-oriented supersingular elliptic curve, then there exists a $c$-isogeny $\phi :E \to E^p$ with $\ker (\phi)=G$. We have that $\text{End}(E,G)$ is isomorphic to an Eichler $\mathcal{O}_c(q,r)$ or $\mathcal{O}'_c(q,r')$. As we know, the Eichler $\mathcal{O}_c(q,r)$ or $\mathcal{O}'_c(q,r')$ can represent a binary form with discriminant $-16cp$. By Theorem \ref{zt6}, the form $F$ can represent $C=2c$ properly.
\end{proof}
\begin{theorem}
Let $E$ be a supersingular elliptic curve over $\mathbb{F}_{p^2}$ with $j(E) \in \mathbb{F}_{p^2} \backslash \mathbb{F}_p$. Under GRH, the curve $E$ is $\mathbb{Z}[\sqrt{-cp}]$-oriented or $\mathbb{Z}[\frac{1+\sqrt{-cp}}{2}]$-oriented for some prime $c< p^{2/3} (\log p)^{2+\varepsilon}$.
\end{theorem}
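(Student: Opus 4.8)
The plan is to reduce the statement to a representation problem for a ternary quadratic form and then to settle that problem by an equidistribution argument under GRH. By Theorem \ref{zt7}, the curve $E$ is $\mathbb{Z}[\sqrt{-cp}]$-oriented or $\mathbb{Z}[\frac{1+\sqrt{-cp}}{2}]$-oriented exactly when the ternary quadratic form $F$ of discriminant $p$ attached to $\operatorname{End}(E)$ by Brandt--Sohn correspondence properly represents $2c$; existence for \emph{some} prime $c$ is classical (connectivity of the isogeny graph forces $E^p$ to be a $c$-isogenous neighbour of $E$ for infinitely many $c$), so the content is the bound. Hence it suffices to produce a prime $c<p^{2/3}(\log p)^{2+\varepsilon}$ properly represented as $2c$ by the positive definite form $F$. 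First I would record the local conditions: since $F$ has discriminant $p$ and is anisotropic at $p$, the integers $2c$ represented by the genus of $F$ are those meeting a congruence at $2$ together with a quadratic residue condition modulo $p$ (the conditions at other primes being automatic). These single out $c$ in a union of residue classes of positive density, and I would restrict attention to this admissible set.

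Next I would pass to the weight-$3/2$ theta series $\theta_F(\tau)=\sum_{x\in\mathbb{Z}^3} q^{F(x)}$ and write $\theta_F=\mathcal{E}+g$ as an Eisenstein part plus a cusp part. Denoting by $r_F(n)$ the number of representations of $n$ by $F$, the Eisenstein coefficient is the Siegel main term $r_{\mathcal{E}}(2c)\asymp \sqrt{c/p}\,\sigma_\infty\prod_\ell \sigma_\ell$, whose local densities $\sigma_\ell$ are positive precisely on the admissible set. Summing over admissible primes $c\le X$ produces a main term of size $\gg X^{3/2}/(\sqrt{p}\,\log X)$, which already exceeds $1$ as soon as $X\gg p^{1/3}$. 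Thus at the level of the genus, equivalently on average over the type class, the desired orientation appears for $c$ of size about $p^{1/3}$; the entire difficulty is to replace the genus by the single form $F$ attached to $\operatorname{End}(E)$.

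The core of the argument is therefore the estimation of the cusp contribution $\sum_{c\le X} r_g(2c)$. Here I would invoke the Shimura correspondence to lift $g$ to integral weight $2$ and bound the prime-indexed sums of its Fourier coefficients under GRH, and combine this with a spectral large sieve over a basis of the space of weight-$3/2$ cusp forms of level $O(p)$. This shows that the cusp part is dominated by the Eisenstein main term once $X\gg p^{2/3}(\log p)^{2+\varepsilon}$; for such $X$ one gets $\sum_{c\le X} r_F(2c)>0$, so some admissible prime $c<p^{2/3}(\log p)^{2+\varepsilon}$ is properly represented as $2c$ by $F$. Theorem \ref{zt7} then yields the claimed orientation, the case $cp\equiv 3\pmod 4$ producing $\mathbb{Z}[\frac{1+\sqrt{-cp}}{2}]$ through the improperly primitive reciprocal as in Proposition \ref{Dick6}.

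The main obstacle is exactly this error term: it is the effective equidistribution, with a power-of-$p$ saving, of the sparse set of integers represented by $F$ (equivalently of the associated Gross/CM points) in the short range $[1,X]$ with $X$ far below the discriminant $p$. In this regime most integers below $p$ are not represented, so no unconditional bound suffices and the positivity of a single representation number cannot be read off the main term alone; only the summation over $c$ with genuine cancellation in the cusp coefficients saves the day. The passage from the heuristic threshold $p^{1/3}$ to the provable $p^{2/3}$ is precisely the loss incurred by bounding half-integral weight Fourier coefficients through their Shimura lifts under GRH, and improving the exponent would demand cancellation beyond what GRH for a single $L$-function provides.
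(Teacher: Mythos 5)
Your opening reduction is the same as the paper's closing step: by Theorem \ref{zt7} it suffices to show that the ternary form $F$ attached to $\operatorname{End}(E)$ properly represents $2c$ for some prime $c<p^{2/3}(\log p)^{2+\varepsilon}$, and your genus-level (Eisenstein) heuristic is sound. The proof nevertheless has a genuine gap at its core: the claim that, under GRH, the cuspidal contribution $\sum_{c\le X} r_g(2c)$ is dominated by the Eisenstein main term as soon as $X\gg p^{2/3}(\log p)^{2+\varepsilon}$ is asserted, not proven. Invoking the Shimura correspondence and a spectral large sieve does not settle it: after Waldspurger/Kohnen--Zagier, GRH (Lindel\"of) controls the eigenform coefficients $|a_{g_j}(n)|$ only up to normalizing constants involving the Petersson norms of the weight-$3/2$ eigenforms, the Petersson norm of the cuspidal projection $g$ of $\theta_F$, and the dimension (which is $\asymp p/12$) of the relevant space of level-$4p$ forms. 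The dependence of all of these on $p$ is exactly the notorious obstruction to making Duke-type representation theorems uniform in the discriminant, and nothing in your sketch controls it; running Cauchy--Schwarz over $\asymp p$ eigenforms with only Lindel\"of-quality bounds loses powers of $p$ that push the threshold well above $p^{2/3}$. Moreover the very specific shape of the bound --- a power of $p$ times $(\log p)^{2+\varepsilon}$ --- is characteristic of GRH least-prime(-ideal) theorems, not of half-integral-weight coefficient estimates, so your account of the exponent $2/3$ as ``the loss incurred by Shimura lifts under GRH'' is a rationalization rather than a derivation.

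The paper's argument is different and essentially elementary at this point, and it shows where $2/3$ and $(\log p)^{2+\varepsilon}$ actually come from. By Elkies' geometry-of-numbers bound \cite{Elikes87}, $\operatorname{End}(E)$ contains a non-scalar element of reduced norm $<\tfrac12 p^{2/3}+\tfrac14$, so an imaginary quadratic order of discriminant $O(p^{2/3})$ embeds into $\operatorname{End}(E)$; equivalently, the reciprocal of $F$ represents a small integer, which means $F$ properly represents a \emph{binary} form $\varrho$ of discriminant $|D|\ll p^{2/3}$. This passage from the rank-$3$ problem to a rank-$2$ problem is the key idea your proposal is missing. One then applies Sardari's GRH bound on the least prime represented by a primitive binary quadratic form \cite{sardari2018prime} to the primitive form among $\varrho$, $\varrho/2$, obtaining a prime $c<|D|(\log|D|)^{2+\varepsilon}$ with $2c$ properly represented by $F$, and Theorem \ref{zt7} concludes (with the $\mathbb{Z}[\frac{1+\sqrt{-cp}}{2}]$ case arising as you indicate). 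Thus the exponent $2/3$ is of lattice-theoretic origin and the logarithmic factor is inherited from the binary least-prime step; no theta-series spectral analysis is needed, and salvaging your route would require proving the missing uniform-in-$p$ cusp bound, an open-ended analytic problem rather than a routine consequence of GRH.
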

\begin{proof}
For any supersingular elliptic curve $E$ over $\mathbb{F}_{p^2}$, there exists an element $\alpha \in \text{End}(E)\backslash \mathbb{Z}$ with $\text{nrd}(\alpha) < \frac{1}{2}p^{2/3}+\frac{1}{4}$ (see \cite[Section 4]{Elikes87}). So there exists an imaginary quadratic order $O$ which can be embedded into $\text{End}(E)$ with $|\text{disc}(O)| < 2p^{2/3}+1$.

Assume that $\text{End}(E)$ corresponds to a ternary quadratic form $F$ with discriminant $p$. Then the reciprocal $f$ of $F$ can represent an integer less than $2p^{2/3}+1$. Moreover, the ternary quadratic form $F$ can represent a binary quadratic form $\varrho$ properly with discriminant $|D| < 4(2p^{2/3}+1)$. Since the discriminant of $F$ is $p$, we have that $\varrho$ or $\varrho/2$ is primitive. Under GRH, every primitive binary quadratic form of discriminant $D$ can represent a prime smaller than $|D|\log (|D|)^{2+\varepsilon}$ \cite{sardari2018prime}.
So $\varrho$ can represent a prime less than $|D| (\log |D|)^{2+\varepsilon} < p^{2/3} (\log p)^{2+\varepsilon} $.

It follows that the ternary quadratic form $F$ can represent a prime less than $p^{2/3} (\log p)^{2+\varepsilon}$ properly, so the curve $E$ is $\mathbb{Z}[\sqrt{-cp}]$-oriented or $\mathbb{Z}[\frac{1+\sqrt{-cp}}{2}]$-oriented for some prime $c< p^{2/3} (\log p)^{2+\varepsilon}$ by Theorem \ref{zt7}.
\end{proof}

\begin{theorem}
Let $p>3$ and $c < 3p/16$ be primes. Let $E$ be a $\mathbb{Z}[\sqrt{-cp}]$-oriented (or $\mathbb{Z}[\frac{1+\sqrt{-cp}}{2}]$-oriented) supersingular elliptic curve defined over $\mathbb{F}_{p^2}$. If an imaginary quadratic order $O$ with discriminant $-4D$ (resp. $-D$ if $D \equiv 3 \pmod 4$) can be embedded into the endomorphism ring $\operatorname{End}(E) \cong \mathcal{O}$ where $D$ is a prime with $4D <p$ (resp. $D<p$), we can compute $\mathcal{O}$ by solving one square root in $\mathbb{F}_D$ and two square roots in $\mathbb{F}_c$.
\end{theorem}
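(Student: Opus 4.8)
The plan is to present this statement as the composite of Theorem~\ref{zt4} and Theorem~\ref{zt6}: the single square root in $\mathbb{F}_D$ will produce the binary quadratic form of discriminant $-16cp$ (resp. $-cp$) attached to $E$, and the two square roots in $\mathbb{F}_c$ will convert the associated Eichler order into the ternary form of discriminant $p$, hence into the maximal order $\mathcal{O}$.

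First I would unwind the orientation. Since $c<3p/16$, the $\mathbb{Z}[\sqrt{-cp}]$-orientation (resp. $\mathbb{Z}[\frac{1+\sqrt{-cp}}{2}]$-orientation) gives a $c$-isogeny $\phi\colon E\to E^p$ with $\ker(\phi)=G$, and $\operatorname{End}(E,G)$ is an Eichler order $\mathcal{O}_c(q,r)$ (resp. $\mathcal{O}'_c(q,r')$) of level $c$; as recalled in Section~5.2, it represents a reduced binary form $\rho$ (or $\rho^{-1}$) of discriminant $-16cp$ (resp. $-cp$) in the genus class $\Lambda(q)$ (resp. $\Lambda'(q)$). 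The whole task is thus to pin down $\rho$ using a single square root.

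Next I would imitate Theorem~\ref{zt4} with $cp$ in place of $p$. Embedding $O$ into the supersingular ring $\operatorname{End}(E)$ forces $\left(\frac{-D}{p}\right)=-1$, and the reciprocity computation of Theorem~\ref{zt4} then gives $\left(\frac{-p}{D}\right)=1$. The one genuinely new point is the factor $c$: I must produce $\left(\frac{c}{D}\right)=1$, so that $\left(\frac{-cp}{D}\right)=1$ and the congruence $x^2\equiv -16cp\pmod{16D}$ (resp. the corresponding congruence modulo $4D$ for discriminant $-cp$) becomes solvable. This I would extract from the fact that $\operatorname{End}(E)\cong\mathcal{O}$ represents $O$: the reciprocal $f$ of the discriminant-$p$ form $F$ attached to $\mathcal{O}$ represents $D$ (the norm of the trace-zero generator of $O$), while $F$ itself represents $2c$ properly by Theorem~\ref{zt7}; the compatibility of these two representations inside the single order $\mathcal{O}$ is what pins the assigned character at $c$ and yields $\left(\frac{c}{D}\right)=1$. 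With solvability in hand, one Tonelli--Shanks square root in $\mathbb{F}_D$ returns $x$, unique up to sign, hence $\rho=(4D,x,\tfrac{x^2+16cp}{16D})$ (and its improperly primitive analogue in the $-cp$ case); recomputing the assigned characters at $c$, $p$ and $2$ certifies $\rho\in\Lambda(q)$ (resp. $\Lambda'(q)$), so $\rho$ is exactly the form of $\operatorname{End}(E,G)$ and not a rival class of the same discriminant.

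Finally, with $\rho$ (equivalently $\operatorname{End}(E,G)$) in hand, I would invoke Theorem~\ref{zt6}: two square roots in $\mathbb{F}_c$ return the unique ternary form $F$ of discriminant $p$ whose reciprocal represents $\rho$ (uniqueness by Propositions~\ref{Dick5} and~\ref{Dick8}), and the even Clifford construction of Section~2.3 together with Brandt--Sohn (Proposition~\ref{BS1}) recovers a maximal order $\mathcal{O}\cong\operatorname{End}(E)$. The total cost is one square root in $\mathbb{F}_D$ and two in $\mathbb{F}_c$. I expect the main obstacle to be precisely the derivation of $\left(\frac{c}{D}\right)=1$ and the matching of all assigned characters: this is what guarantees that the single square root modulo $D$ selects the class $\rho$ in the genus $\Lambda(q)$ belonging to $E$, rather than another class of discriminant $-16cp$. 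Everything else is either the verbatim reciprocity of Theorem~\ref{zt4} or a direct appeal to Theorem~\ref{zt6}.
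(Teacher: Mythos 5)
Your overall skeleton is exactly the paper's: one square root in $\mathbb{F}_D$ pins down a binary form of discriminant $-16cp$ (resp. $-cp$), mirroring Theorem~\ref{zt4} with $-16cp$ in place of $-16p$, and then Theorem~\ref{zt6} converts that form, at the cost of two square roots in $\mathbb{F}_c$, into the ternary form of discriminant $p$ and hence into $\mathcal{O}\cong\operatorname{End}(E)$. The problem is the step you yourself flag as the crux. You propose to prove solvability of $x^2\equiv -16cp \pmod{16D}$ by establishing the two separate conditions $\left(\frac{-p}{D}\right)=1$ and $\left(\frac{c}{D}\right)=1$. The first of these is false in general. The reciprocity computation of Theorem~\ref{zt4} gives $\left(\frac{-p}{D}\right)=1$ unconditionally only when $D\equiv 3\pmod 4$; when $D\equiv 1 \pmod 4$ it additionally needs $\left(\frac{D}{p}\right)=1$, which in Theorem~\ref{zt4} is imported from \cite{MR4432517} and is a statement about supersingular $j$-invariants lying in $\mathbb{F}_p$ --- an input unavailable here, since the oriented curve $E$ is in general defined over $\mathbb{F}_{p^2}$ only. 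Concretely, in the discriminant $-4D$ case with $p\equiv 1\pmod 4$ and $D\equiv 1\pmod 4$, the embedding forces $\left(\frac{D}{p}\right)=-1$, and quadratic reciprocity then yields $\left(\frac{-p}{D}\right)=\left(\frac{p}{D}\right)=\left(\frac{D}{p}\right)=-1$; solvability would then require $\left(\frac{c}{D}\right)=-1$, the opposite of what you assert. Only the product condition $\left(\frac{-cp}{D}\right)=1$ can hold in all cases, and that is the condition one must establish; your factorization of it into two separate characters is not merely unproved but wrong.

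Moreover, your route to $\left(\frac{c}{D}\right)=1$ --- ``compatibility'' of the representation of $D$ by the reciprocal $f$ with the proper representation of $2c$ by $F$ (Theorem~\ref{zt7}) inside the single order $\mathcal{O}$ --- is not an argument; it restates the thing to be proved (and, per the above, aims at the wrong statement). The paper never splits the Legendre symbol at all: arguing as in Theorem~\ref{zt4}, it deduces from the embedding hypothesis that the single congruence $x^2\equiv -16cp\pmod{16D}$ (resp. $x^2\equiv -16cp\pmod{4D}$) is solvable with solution unique up to sign, takes the resulting binary form $(a,2t,b)$ with $a=4D$ or $D$, and then invokes Theorem~\ref{zt6} for the congruences (\ref{equation2}) modulo $2c$ and the passage to the ternary form and the maximal order. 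Your final stage (uniqueness via Propositions~\ref{Dick5} and~\ref{Dick8}, genus certification, Brandt--Sohn) agrees with the paper, but as written your proposal does not close the gap it correctly identifies as the main obstacle.
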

\begin{proof}
Similar to the proof of Theorem \ref{zt4}. Firstly, we need to solve the equation $x^2\equiv -16cp \pmod {16D}$ (or $x^2\equiv -16cp \pmod {4D}$). This equation has a unique solution up to $\pm 1$, so we get a binary quadratic form $(a,2t,b)$ ($a=4D$ or $D$) with discriminant $(2t)^2-4ab=-16cp$.

By Theorem \ref{zt6}, the congruences $(\ref{equation2})$ are solvable, and we can compute a ternary quadratic form $F$ with discriminant $p$. It corresponds to a maximal order $\mathcal{O}$ in $B_{p,\infty}$, which is isomorphic to $\text{End}(E)$.
\end{proof}

\begin{example}\label{ex2}
Let $p=101$ and $c=3$. Define $\mathbb{F}_{p^2}=\mathbb{F}_p(\alpha)$ with $\alpha^2+2=0$. Since $\left( \frac{-11}{83} \right)=-1$, the root of Hilbert class polynomial $H_{-11}(X) \equiv X-57 \pmod {101}$ is a supersingular j-invariant.

The elliptic curve $E:y^{2}=x^{3}+39 x+23$ with $j(E)=57$ is a supersingular elliptic curve over $\mathbb{F}_{101}$. Solving the equation $t^2\equiv -16cp \pmod {4 \times 11}$, we can compute a binary quadratic forms $(11, 6,111)$. Moreover, we can compute a ternary quadratic form $F(x,y,z)=6x^2+2y^2+20z^2+2yz+6xz+2xy$ with discriminant $101$, and it corresponds to a maximal order $\mathcal{O}=\mathbb{Z}+\mathbb{Z}i+\mathbb{Z}j+\mathbb{Z}k$, where $i,j,k$ satisfy the following equations:
$$ \begin{array}{ll}
  i^2=i-10 \quad \quad & jk=3\bar{i}=3(1-i) \vspace{0.5ex}  \\
  j^2=3j-30 \quad \quad & ki=\bar{j}=3-j \vspace{0.5ex}  \\
  k^2=k-3 \quad \quad & ij=10\bar{k}=10(1-k).
 \end{array}
$$

\end{example}

\section{Isogenies between supersingular elliptic curves}
Let $\ell$ be a prime. Assume that $\varphi:E \to E'$ is an $\ell$-isogeny. If the endomorphism ring $\mathcal{O}$ of $E$ (which corresponds to a ternary quadratic form $F$) is known, we will compute the endomorphism ring $\mathcal{O}'$ of $E'$ and the corresponding ternary quadratic form in this section.

We divide into two cases. If the isogeny $\varphi$ is $\mathbb{Z}[\sqrt{-cp}]$-oriented, then it can be represented by an ideal in $\mathbb{Z}[\sqrt{-cp}]$. By \cite[Theorems 5 and 6]{xiao2023oriented}, we can compute an Eichler order in the endomorphism ring of $E'$. Moreover, we can compute the ternary quadratic form corresponding to $\text{End}(E')$ by the method in Section 4.
If the isogeny $\varphi$ is not $\mathbb{Z}[\sqrt{-cp}]$-oriented, then we can give the kernel ideal of $\varphi$, and represent its right order by a ternary quadratic form with discriminant $p$.

\subsection{$\mathbb{Z}[\sqrt{-cp}]$-oriented isogenies between supersingular elliptic curves}
Let $c$ be a prime with $c< 3p/16$ or $c=1$. Let $K=\mathbb{Q}(\sqrt{-cp})$ be an imaginary quadratic field. Let $O$ be an order in $K$ and $O_K$ be the integer ring of $K$. Let $E$ be a $\mathbb{Z}[\sqrt{-cp}]$-oriented (or $\mathbb{Z}[\frac{1+\sqrt{-cp}}{2}]$-oriented) supersingular elliptic curve. Notice that $E$ is defined over $\mathbb{F}_p$ if $c=1$.

In this section, we restrict to $\mathbb{Z}[\sqrt{-cp}]$-oriented isogenies with prime degree $\ell$. As we know, there are $1+\left( \frac{-cp}{\ell} \right)$ many $\mathbb{Z}[\sqrt{-cp}]$-oriented $\ell$-isogenies from $E$. If $\ell=c$, then $\left( \frac{-cp}{c} \right)=0$ and the $c$-isogeny $\phi:E \to E^p$ is oriented. In this case, we have $\text{End}(E)\cong \text{End}(E^p)$.

\begin{theorem}
Let $c<3p/16$ be a prime or $c=1$. Let $E$ be a $\mathbb{Z}[\sqrt{-cp}]$-oriented (or $\mathbb{Z}[\frac{1+\sqrt{-cp}}{2}]$-oriented) supersingular elliptic curve over $\mathbb{F}_{p^2}$. There exists a $c$-isogeny $\phi:E \to E^p$ with kernel $G \subseteq E[c]$. Assume that the endomorphism ring $\operatorname{End}(E,G)$ is known. If $\varphi:E \to E'$ is a $\mathbb{Z}[\sqrt{-cp}]$-oriented (or $\mathbb{Z}[\frac{1+\sqrt{-cp}}{2}]$-oriented) isogeny with prime degree $\ell \neq c$, then we can compute $\operatorname{End}(E')$ by composing two binary quadratic forms with discriminant $-16cp$ and solving two square roots in $\mathbb{F}_c$.
\end{theorem}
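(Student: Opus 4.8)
The plan is to chain together two computations already available to us. Since $\varphi$ is $\mathbb{Z}[\sqrt{-cp}]$-oriented (resp.\ $\mathbb{Z}[\frac{1+\sqrt{-cp}}{2}]$-oriented), the image $E'$ carries the same orientation, so there is a $c$-isogeny $\phi':E'\to(E')^p$ with kernel $G'=\varphi(G)\subseteq E'[c]$, and $\operatorname{End}(E',G')$ is an Eichler order of level $c$ isomorphic to some $\mathcal{O}_c(q',r')$ or $\mathcal{O}'_c(q',r'')$. (When $c=1$ the curve is defined over $\mathbb{F}_p$, $\operatorname{End}(E,G)=\operatorname{End}(E)$ is already maximal, and the argument collapses to the discriminant $-16p$ or $-p$ setting of Section 4.1; so assume below that $c$ is prime.) The task thus splits into two steps: first transport the given form of $\operatorname{End}(E,G)$ to the reduced binary quadratic form of discriminant $-16cp$ attached to $\operatorname{End}(E',G')$, then feed that form to Theorem \ref{zt6} to recover the maximal order $\operatorname{End}(E')$.

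First I would encode the oriented $\ell$-isogeny $\varphi$ as an invertible ideal of norm $\ell$ in the quadratic order underlying the orientation. Because $\ell\neq c$ is prime and $\varphi$ is oriented, such an ideal exists, and there are $1+\left(\frac{-cp}{\ell}\right)$ of them, matching the count of oriented $\ell$-isogenies out of $E$ recorded above. Under the dictionary between invertible ideal classes and binary quadratic forms, this ideal corresponds to a form $\rho_\varphi$ of discriminant $-16cp$ that properly represents $\ell$, while $\operatorname{End}(E,G)$ corresponds to the given form $\rho_E$ of the same discriminant by \cite[Proposition 4.1]{xiao2023oriented}. The content of \cite[Theorems 5 and 6]{xiao2023oriented} is precisely that the isogeny action on these Eichler orders is the class-group action; consequently $\operatorname{End}(E',\varphi(G))$ is represented by the Gauss composition $\rho_{E'}=\rho_E*\rho_\varphi$, a single composition of two forms of discriminant $-16cp$. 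Reduction of $\rho_{E'}$ then gives the reduced form of $\operatorname{End}(E',G')$.

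It remains to invoke Theorem \ref{zt6} on $\rho_{E'}$: with $C=2c$, $\Omega=2p$, $\Delta=1$ the congruences (\ref{equation2}) are solvable, and solving them amounts to extracting the two square roots of $-a$ and $-b$ modulo $2c$, that is, two square roots in $\mathbb{F}_c$ (the residues modulo $2$ being forced). This produces the unique reduced ternary form $F_{E'}$ of discriminant $p$ whose reciprocal represents $\rho_{E'}$, and by Brandt--Sohn correspondence $F_{E'}$ is the ternary form attached to the maximal order $\operatorname{End}(E')$. In total $\operatorname{End}(E')$ is obtained from one Gauss composition of two binary forms of discriminant $-16cp$ together with two square roots in $\mathbb{F}_c$, as asserted.

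The step I expect to be the crux is identifying the oriented isogeny action with Gauss composition at the level of the discriminant $-16cp$ forms. The delicate point is the conductor: the orientation sits in $\mathbb{Z}[\sqrt{-cp}]$ of discriminant $-4cp$, whereas the Eichler-order forms have discriminant $-16cp$ (conductor $2$). I would therefore verify that the norm-$\ell$ ideal attached to $\varphi$ lifts to the conductor-$2$ order, so that $\rho_\varphi$ is a genuine discriminant $-16cp$ form properly representing $\ell$ and the composition $\rho_E*\rho_\varphi$ again represents $\operatorname{End}(E',G')$; the case $\ell=2$, where $\ell$ divides the conductor, would merit separate attention. Since $\operatorname{End}(E',G')$ is a bona fide Eichler order of level $c$, the final application of Theorem \ref{zt6} is then automatic, and the two steps combine to give the stated complexity.
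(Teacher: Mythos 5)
Your pipeline has the same skeleton as the paper's proof: attach to $\operatorname{End}(E,G)$ a binary form $\rho$ of discriminant $-16cp$, attach to the oriented $\ell$-isogeny $\varphi$ a form $\varrho$ of the same discriminant representing $\ell$, compose to obtain the form represented by $\operatorname{End}(E',\varphi(G))$, then run Theorem \ref{zt6} (two square roots in $\mathbb{F}_c$) to get the ternary form of discriminant $p$ and hence $\operatorname{End}(E')$. The genuine gap is in the composition step, which you yourself flag as the crux: you assert that the oriented isogeny action on these Eichler-order forms is plain Gauss composition, $\rho_{E'}=\rho_E*\rho_\varphi$. It is not. By \cite[Theorems 5 and 6]{xiao2023oriented} --- the result the paper invokes here --- the form represented by $\operatorname{End}(E',\varphi(G))$ is $\rho'=\rho\varrho^{2}$, i.e.\ composition with the \emph{square} of the class of $\varrho$; this is also why the statement speaks of ``composing two binary quadratic forms'' (two composition operations, not one). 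Conceptually the square arises because the form attached to the level-$c$ Eichler order measures the relative position of $E$ and $E^p$ in the oriented isogeny graph, and Frobenius conjugates the class-group action: if $E$ moves by $[\mathfrak{l}]$, then $E^p$ moves by $[\bar{\mathfrak{l}}]=[\mathfrak{l}]^{-1}$, so the relative class changes by $[\mathfrak{l}]^{2}$.

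The error is not cosmetic; it changes the output. In the paper's Example \ref{ex3} ($p=83$, $c=1$, $\ell=3$) one has $\rho=(7,4,48)$ and $\varrho=(3,2,111)$. Your recipe gives the single composition $\rho*\varrho\sim(21,32,28)\sim(17,10,21)$, whereas the correct class is $\rho\varrho^{2}=(7,4,48)(3,2,111)^{2}\sim(16,-12,23)$. Both $(17,\pm10,21)$ and $(16,\pm12,23)$ are reduced, hence these classes are inequivalent even up to inversion, so your procedure hands Theorem \ref{zt6} the wrong binary form and outputs a maximal order that is not isomorphic to $\operatorname{End}(E')$ under the paper's correspondence. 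Your concluding worry about lifting the norm-$\ell$ ideal to the conductor-$2$ order of discriminant $-16cp$ is legitimate but secondary, and resolving it would not repair this step; what is needed is the squared action from \cite[Theorems 5 and 6]{xiao2023oriented}. The remainder of your argument (transporting $G$ to $G'=\varphi(G)$, and the reduction to Theorem \ref{zt6} with its two square roots in $\mathbb{F}_c$) does match the paper.
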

\begin{proof}
By \cite[Theorem 4]{xiao2023oriented}, the endomorphism ring $\text{End}(E,G)$ is isomorphic to an Eichler order $\mathcal{O}_c(q,r)$ (or $\mathcal{O}'_c(q,r')$) for some $q$ satisfying (\ref{equation31}). Assume that the Eichler order $\mathcal{O}_c(q,r)$ (or $\mathcal{O}'_c(q,r')$) can represent a binary quadratic form $\rho$ with discriminant $-16cp$.


The $\mathbb{Z}[\sqrt{-cp}]$-oriented (or $\mathbb{Z}[\frac{1+\sqrt{-cp}}{2}]$-oriented) $\ell$-isogeny $\varphi:E \to E'$ can be represented by a binary quadratic form $\varrho$ with discriminant $-16cp$. By \cite[Theorem 5 and 6]{xiao2023oriented}, $\text{End}(E',\varphi(G))$ can represent a quadratic form $\rho'=\rho \varrho^2$ with discriminant $-16cp$. Moreover, we can compute a ternary quadratic form $f'$ which can represent $\rho'$, and the reciprocal $F'$ of $f'$ corresponds to $\text{End}(E')$.

\end{proof}

\begin{remark}
If $\ell=c$, then the binary quadratic form $\varrho$ has order $\le 2$. So $\rho'=\rho \varrho^2=\rho $, which implies $\text{End}(E') \cong \text{End}(E)$.
\end{remark}

\begin{example}\label{ex3}
As in Example \ref{ex1}, the elliptic curve $E_1:y^{2}=x^{3}+77 x+12$ over $\mathbb{F}_{83}$ is supersingular. The endomorphism ring $\text{End}(E_1) \cong \mathcal{O}_1$ can represent a binary quadratic form $(7,4,48)$. There exists a $\mathbb{Z}[\sqrt{-p}]$-oriented $3$-isogeny $\varphi:E_1 \to E_2$ with $E_2:y^2=x^3+18x+16$, and the kernel ideal of $\varphi$ corresponds to a binary quadratic form $(3,2,111)$.

The endomorphism ring $\text{End}(E_2) \cong \mathcal{O}_2$ can represent a binary quadratic form $(16,-12,23)=(7,4,48)(3,2,111)^2$. Moreover, we can compute a ternary quadratic form $F(x,y,z)=2x^2+8y^2+12z^2-6yz-2xz$ with discriminant $83$, and it corresponds to a maximal order $\mathcal{O}=\mathbb{Z}+\mathbb{Z}i+\mathbb{Z}j+\mathbb{Z}k$, where $i,j,k$ satisfy the following equations:
$$ \begin{array}{ll}
  i^2=-3i-24 \quad \quad & jk=\bar{i}=-3-i \vspace{0.5ex}  \\
  j^2=-j-6 \quad \quad & ki=4\bar{j}=4(-1-j) \vspace{0.5ex}  \\
  k^2=-4 \quad \quad & ij=6\bar{k}=-6k.
 \end{array}
$$
\end{example}

\subsection{Other isogenies between supersingular elliptic curves}

We first review the relations between binary quadratic forms with discriminants $D$ and $\ell^2 D$, and we study the action of non-oriented isogenies in the following.

In general, any primitive binary quadratic form with discriminant $D\ell^2$ can be derived from a primitive binary quadratic form of discriminant $D$ by the application of a transformation $L \in M_{2 \times 2}(\mathbb{Z})$ of determinant $\ell$ (see \cite[Chapter 7.1]{MR1012948}). We define two transformations $L_1$ and $L_2$ of discriminant $\ell$ to be $\operatorname{SL}_2(\mathbb{Z})$-right-equivalent if there exists a matrix $M \in \operatorname{SL}_2(\mathbb{Z})$ such that $L_1M=L_2$. Moreover, we have the following proposition.
\begin{proposition}(\cite[Proposition 7.2]{MR1012948})\label{p51}
Let $\ell$ be a prime integer. The $\operatorname{SL}_2(\mathbb{Z})$-right-equivalent transformations have as equivalence class representatives the $\ell+1$ transformations
\begin{center}
$L_{\ell}=\left(\begin{array}{cc}
1 \ & 0 \\
0 \ & \ell
\end{array}\right)$
\ and  \
$L_h=\left(\begin{array}{cc}
\ell \ & h \\
0  \ & 1
\end{array}\right),$
\end{center}
for $0 \le h \le \ell-1$.
\end{proposition}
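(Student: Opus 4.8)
The plan is to classify the right-$\operatorname{SL}_2(\mathbb{Z})$ orbits of integer matrices of determinant $\ell$ by attaching to each such $L$ a single geometric invariant: the sublattice $\Lambda_L := L\mathbb{Z}^2 \subseteq \mathbb{Z}^2$ spanned by its columns. Since $\det L = \ell$, this lattice has index $\ell$ in $\mathbb{Z}^2$. For $M \in \operatorname{SL}_2(\mathbb{Z})$ one has $LM\,\mathbb{Z}^2 = L(M\mathbb{Z}^2) = L\mathbb{Z}^2$, so $\Lambda_L$ depends only on the right-equivalence class, and the whole problem reduces to (i) showing $\Lambda_L$ is a \emph{complete} invariant, and (ii) enumerating the index-$\ell$ sublattices of $\mathbb{Z}^2$ together with a distinguished matrix realizing each.

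First I would prove that $\Lambda_{L_1} = \Lambda_{L_2}$ forces $L_1 = L_2 M$ for some $M \in \operatorname{SL}_2(\mathbb{Z})$. Indeed, the columns of $L_1$ and of $L_2$ are then two $\mathbb{Z}$-bases of the same lattice, so $M := L_2^{-1} L_1$ and its inverse are both integral, whence $M \in \operatorname{GL}_2(\mathbb{Z})$. The key point, and the step needing the most care, is the orientation bookkeeping: $\det M = \det L_1 / \det L_2 = \ell / \ell = 1 > 0$, so in fact $M \in \operatorname{SL}_2(\mathbb{Z})$ rather than merely $\operatorname{GL}_2(\mathbb{Z})$. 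This same sign remark also guarantees that distinct column lattices yield genuinely $\operatorname{SL}_2(\mathbb{Z})$-inequivalent matrices, so the positivity of the determinant must be exploited cleanly at the outset.

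It then remains to enumerate index-$\ell$ sublattices $\Lambda$ and read off a normal form. I would project to the second coordinate, $\pi_2 \colon \mathbb{Z}^2 \to \mathbb{Z}$; since $\ell$ is prime, $\pi_2(\Lambda) = d\mathbb{Z}$ with $d \in \{1, \ell\}$ and $[\mathbb{Z}^2 : \Lambda] = d \cdot [\,\mathbb{Z}\times\{0\} : \Lambda \cap (\mathbb{Z}\times\{0\})\,] = \ell$. When $d = \ell$ one gets $(1,0) \in \Lambda$ and $\Lambda = \langle (1,0),(0,\ell)\rangle$, realized by $L_\ell$; when $d = 1$ there is a vector $(h,1) \in \Lambda$ and $\Lambda = \langle (\ell,0),(h,1)\rangle$, realized by $L_h$ after reducing $h$ modulo $\ell$ into $\{0,\dots,\ell-1\}$. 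A direct check shows these $\ell+1$ lattices are pairwise distinct (the $L_\ell$-lattice meets $\mathbb{Z}\times\{1\}$ trivially, while $\langle(\ell,0),(h,1)\rangle = \langle(\ell,0),(h',1)\rangle$ forces $h \equiv h' \pmod \ell$), and counting confirms completeness: every index-$\ell$ sublattice contains $\ell\mathbb{Z}^2$, so these correspond to the $\ell+1$ lines of $\mathbb{P}^1(\mathbb{F}_\ell)$. Combining the completeness of the invariant with this enumeration yields exactly the stated list of $\ell+1$ representatives.
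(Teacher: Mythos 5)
Your proof is correct. Note that the paper gives no argument of its own for this proposition---it is quoted directly from Buell \cite[Proposition 7.2]{MR1012948}---so the relevant comparison is with Buell's proof, which is a direct matrix-reduction argument: starting from an arbitrary integer matrix $L$ of determinant $\ell$, one right-multiplies by explicit elements of $\operatorname{SL}_2(\mathbb{Z})$ (a gcd/column-operation computation, essentially putting $L$ into Hermite normal form) to land on one of the matrices $L_\ell$, $L_h$, and then verifies the $\ell+1$ normal forms are pairwise inequivalent. You replace that computation by the geometric invariant $\Lambda_L = L\mathbb{Z}^2$: your completeness step is handled correctly (the point that $\det L_1 = \det L_2 = \ell$ forces $\det M = +1$, not merely $\pm 1$, is exactly the place a careless argument would only get $\operatorname{GL}_2(\mathbb{Z})$-equivalence), and the orbit classification then reduces to enumerating index-$\ell$ sublattices of $\mathbb{Z}^2$, which your projection argument and the identification with $\mathbb{P}^1(\mathbb{F}_\ell)$ carry out cleanly. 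What your route buys is conceptual clarity and reusability---it explains \emph{why} the count is $\ell+1$ (lines in $\mathbb{F}_\ell^2$) and generalizes immediately to composite determinant or higher rank, where the orbits correspond to sublattices and the representatives to Hermite normal forms; what Buell's reduction buys is that it is algorithmic, producing the transforming matrix $M$ explicitly, which is what one actually uses when deriving forms of discriminant $D\ell^2$ from forms of discriminant $D$ as in Remark~\ref{re1}. The one assertion you lean on without proof, the index formula $[\mathbb{Z}^2:\Lambda] = [\mathbb{Z}:\pi_2(\Lambda)]\cdot[\mathbb{Z}\times\{0\}:\Lambda\cap(\mathbb{Z}\times\{0\})]$, is a standard consequence of the second isomorphism theorem and is not a gap.
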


\begin{proposition}(\cite[Proposition 7.3]{MR1012948})
Given any binary quadratic form $(a,t,b)$ with discriminant $D$, and any prime $\ell$ with $\ell \nmid a$, the $\ell +1$ representative transformations of determinant $\ell$ produce exactly $\ell -\left(\frac{D}{\ell} \right)$ primitive binary quadratic forms of determinant $D\ell^2$.
\end{proposition}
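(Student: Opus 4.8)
The plan is to apply each of the $\ell+1$ representative transformations of Proposition \ref{p51} directly to $(a,t,b)=ax^2+txy+by^2$, read off the coefficients of the resulting forms, and then count how many of them are primitive. First I would record the two substitution patterns: the map attached to $L_\ell=\left(\begin{smallmatrix}1 & 0\\ 0 & \ell\end{smallmatrix}\right)$ sends $(a,t,b)$ to $(a,\,t\ell,\,b\ell^2)$, while the map attached to $L_h=\left(\begin{smallmatrix}\ell & h\\ 0 & 1\end{smallmatrix}\right)$ sends it to $(a\ell^2,\,\ell(2ah+t),\,ah^2+th+b)$ for $0\le h\le \ell-1$. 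In each case the discriminant is multiplied by $\det(L)^2=\ell^2$, so every output has discriminant $D\ell^2$ as required. Throughout I assume $(a,t,b)$ is primitive, i.e. $\gcd(a,t,b)=1$, which is the natural hypothesis here since otherwise every output form would share the content of $(a,t,b)$ and none could be primitive.

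The key step is the primitivity analysis. I would show that the content of each output form is a power of $\ell$. For the $L_h$ output, if a prime $q\neq\ell$ divided $a\ell^2$, $\ell(2ah+t)$ and $ah^2+th+b$, then $q\mid a$, hence $q\mid t$ from the middle coefficient, hence $q\mid b$ from the last one, contradicting $\gcd(a,t,b)=1$; the identical argument applies to the $L_\ell$ output. Since $\ell\nmid a$, the $L_\ell$ output $(a,t\ell,b\ell^2)$ has content coprime to $\ell$, so it is automatically primitive. For $L_h$, both $a\ell^2$ and $\ell(2ah+t)$ are divisible by $\ell$, so the content is divisible by $\ell$ exactly when $\ell\mid(ah^2+th+b)$; writing $Q(h,1)=ah^2+th+b$, the $L_h$ output is therefore primitive if and only if $\ell\nmid Q(h,1)$.

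It then remains to count. The number of primitive forms equals $1+\#\{h\in\{0,\dots,\ell-1\}:\ell\nmid Q(h,1)\}$, where the leading $1$ accounts for $L_\ell$. The complementary count $\#\{h:\ell\mid ah^2+th+b\}$ is the number of roots modulo $\ell$ of a genuine quadratic (genuine because $\ell\nmid a$), which equals $1+\left(\frac{D}{\ell}\right)$ with $D=t^2-4ab$. Hence the total number of primitive forms produced is
$$1+\Bigl(\ell-1-\left(\tfrac{D}{\ell}\right)\Bigr)=\ell-\left(\frac{D}{\ell}\right),$$
which is the claimed value.

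The main obstacle is treating $\ell=2$ uniformly: there $\left(\frac{D}{2}\right)$ must be read as a Kronecker symbol, so the clean ``$1+\left(\frac{D}{\ell}\right)$ roots'' statement for odd $\ell$ does not apply verbatim and I would verify it by hand. With $a$ odd, one checks $ah^2+th+b\equiv h(1+t)+b\pmod 2$, giving $1$ root when $t$ is even and either $0$ or $2$ roots when $t$ is odd; comparing against $D\equiv 0,1,5\pmod 8$ confirms the count is again $1+\left(\frac{D}{2}\right)$. Once this case is settled, the odd-$\ell$ argument—using that a quadratic with unit leading coefficient has $1+\left(\frac{D}{\ell}\right)$ roots—completes the proof.
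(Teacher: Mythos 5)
Your proof is correct. Note, however, that the paper does not prove this proposition at all: it is quoted from Buell \cite[Proposition 7.3]{MR1012948}, so there is no internal proof to compare against. Your argument is the standard one and matches what the paper relies on implicitly: the coefficient formulas you derive for the outputs of $L_\ell$ and $L_h$ are exactly those recorded in Remark \ref{re1}, and your criterion that the $L_h$-output is primitive if and only if $\ell \nmid \rho(h,1)$ is asserted (without proof) inside the paper's proof of Proposition \ref{Buell1}. Where you differ is in the counting step: you count roots of $ah^2+th+b \equiv 0 \pmod{\ell}$ directly, completing the square to get $1+\left(\frac{D}{\ell}\right)$ roots, whereas the paper's neighbouring Proposition \ref{Buell1} counts via the character-sum identity of \cite[Theorem 5.48]{Lidl_Niederreiter_1996}. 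Root counting yields the total number of primitive outputs (which is this proposition), while the character sum is what is needed for the finer signed count in Proposition \ref{Buell1}; the two are complementary, since the character-sum argument itself tacitly uses your root count to know how many terms in the sum vanish. Two merits of your write-up worth keeping: you make explicit the primitivity hypothesis on $(a,t,b)$, which the statement omits but genuinely needs, and you verify the $\ell=2$ case by hand, where the Legendre-symbol root count must be replaced by the Kronecker symbol and does not follow verbatim from the odd-$\ell$ argument.
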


\begin{remark}\label{re1}
Let $\rho =(a, t, b)$ be a primitive binary quadratic form with discriminant $-16cp$ or $-cp$.
The form $\rho_h=(a\ell^2,\ell(t+2ah),ah^2+th+b)$ is derived from $\rho$ by applying the transformation $L_{h}$ for $0 \le h \le \ell-1$. The form $\rho_{\ell}=(a,t\ell,b\ell^2)$ is derived from $\rho$ by applying the transformation $L_{\ell}$.
\end{remark}

\begin{proposition}\label{Buell1}
Notations as remark $\ref{re1}$. Assume $\ell \nmid aD$, there are $\left( \ell -\left(\frac{D}{\ell} \right) \right)/2$ primitive binary quadratic forms which are derived from $\rho$ satisfying $\left(\frac{-\rho_h}{\ell}\right)=1$.
\end{proposition}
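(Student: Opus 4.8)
The plan is to reduce the statement to an elementary quadratic character-sum computation over $\mathbb{F}_\ell$, after first making sense of the symbol $\left(\frac{-\rho_h}{\ell}\right)$. First I would check that this symbol is well defined, i.e. independent of which value coprime to $\ell$ is used. Reducing $\rho_h=(a\ell^2,\ell(t+2ah),ah^2+th+b)$ modulo $\ell$ shows that every value it represents is congruent to $(ah^2+th+b)y^2\pmod\ell$; hence all its values coprime to $\ell$ lie in one square class and $\left(\frac{-\rho_h}{\ell}\right)=\left(\frac{-(ah^2+th+b)}{\ell}\right)$. Likewise $\rho_\ell=(a,t\ell,b\ell^2)$ represents only values $\equiv ax^2$, so $\left(\frac{-\rho_\ell}{\ell}\right)=\left(\frac{-a}{\ell}\right)$; in particular $\rho_\ell$ is always primitive (its content is coprime to $\ell$ since $\ell\nmid a$, and coprime to every other prime by primitivity of $\rho$), while for $0\le h\le \ell-1$ the form $\rho_h$ is primitive exactly when $\ell\nmid ah^2+th+b$. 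Throughout I take $\ell$ to be odd, as the Legendre symbols require.

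Next I would parametrize the forms of Remark \ref{re1}. Completing the square gives $4a(ah^2+th+b)=(2ah+t)^2-D$, and since $\ell\nmid 2a$ the substitution $s:=2ah+t$ is a bijection from $\{0,\dots,\ell-1\}$ onto $\mathbb{F}_\ell$. For a primitive $\rho_h$ one then has $s^2\ne D$ and
$$\left(\frac{-\rho_h}{\ell}\right)=\left(\frac{a}{\ell}\right)\left(\frac{D-s^2}{\ell}\right).$$
The indices with $s^2\equiv D\pmod\ell$ are precisely the $1+\left(\frac{D}{\ell}\right)$ values giving imprimitive forms, which recovers the count of $\ell-\left(\frac{D}{\ell}\right)$ primitive forms from \cite[Proposition 7.3]{MR1012948}: namely $\ell-1-\left(\frac{D}{\ell}\right)$ among the $\rho_h$ together with the extra form $\rho_\ell$.

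The heart of the argument is the evaluation $\sum_{s\in\mathbb{F}_\ell}\left(\frac{D-s^2}{\ell}\right)=-\left(\frac{-1}{\ell}\right)$, which follows from the standard value $-1$ of the nondegenerate sum $\sum_s\left(\frac{s^2-D}{\ell}\right)$ (valid as $\ell\nmid D$), the terms with $s^2=D$ contributing $0$. Together with the total $\ell-1-\left(\frac{D}{\ell}\right)$ this pins down the number of primitive $\rho_h$ with $\left(\frac{D-s^2}{\ell}\right)=+1$ and with $-1$ separately; multiplying by $\left(\frac{a}{\ell}\right)$ and adding the single contribution $\left(\frac{-a}{\ell}\right)$ of $\rho_\ell$ gives the final tally. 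I would then verify the four cases $\left(\frac{a}{\ell}\right)=\pm1$ and $\left(\frac{-1}{\ell}\right)=\pm1$; in each the total collapses to $\left(\ell-\left(\frac{D}{\ell}\right)\right)/2$. The main obstacle is not the character sum, which is routine, but the bookkeeping: the contribution of the exceptional form $\rho_\ell$ must cancel the stray factor $\left(\frac{-1}{\ell}\right)$ coming from the sum in every sign case, and one must carefully exclude the imprimitive indices $s^2=D$ so that the remaining count matches Proposition \ref{Buell1}.
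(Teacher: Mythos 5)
Your proof is correct and follows essentially the same route as the paper: both arguments reduce the count to the complete character sum of a quadratic polynomial over $\mathbb{F}_\ell$ (the paper invokes Lidl--Niederreiter, Theorem 5.48, directly on $\sum_h \left(\frac{ah^2+th+b}{\ell}\right)=-\left(\frac{a}{\ell}\right)$, while you rederive the same evaluation after completing the square), both treat $\rho_\ell$ as the extra balancing term, and both discard the imprimitive forms as contributing zero. The only difference is organizational: the paper sums the untwisted symbols $\left(\frac{\rho_h}{\ell}\right)$, which total exactly $0$, so the even split into $+1$'s and $-1$'s (and hence the same count after the global twist by $\left(\frac{-1}{\ell}\right)$) is immediate, whereas your direct computation of the twisted symbols forces the four-case bookkeeping in which the contribution of $\rho_\ell$ cancels the stray $\left(\frac{-1}{\ell}\right)$.
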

\begin{proof}
For $0 \le h \le \ell-1$, the form $\rho_h$ can represent $\rho(h,1)=ah^2+th+b$. The form $\rho_{\ell}$ can represent $a$. The form $\rho_h$ is primitive if and only if $\ell \nmid \rho(h,1)$ for $0 \le h \le \ell-1$.
 By \cite[Theorem 5.48]{Lidl_Niederreiter_1996}, we have
$$\left( \frac{a}{\ell} \right)+ \sum_{h=0}^{\ell-1} \left( \frac{\rho(h,1)}{\ell} \right) =\left( \frac{\rho_{\ell}}{\ell} \right)+ \sum_{h=0}^{\ell-1} \left( \frac{\rho_h}{\ell} \right) =0.$$
So there are $\left( \ell -\left( \frac{D}{\ell}\right)\right)/2$ forms derived from $\rho$ satisfying $\left(\frac{-\rho_h}{\ell}\right)=1$.
\end{proof}

Let $E$ be a $\mathbb{Z}[\sqrt{-cp}]$-oriented supersingular elliptic curve defined over $\mathbb{F}_{p^2}$. There exists a $c$-isogeny $\phi:E \to E^p$ with $\ker (\phi)=G$. Let $\ell$ be a prime. Let $\varphi:E \to E'$ be an $\ell$-isogeny with $\ker(\varphi)=H \neq G$. We assume that $\varphi$ is not $\mathbb{Z}[\sqrt{-cp}]$-oriented. The isogeny
$$\Psi: E' \xrightarrow{\hat{\varphi}} E \xrightarrow{\phi} E^p \xrightarrow{\varphi^p} (E')^p    $$
has degree $\ell^2 c$, where the kernel of $\varphi^p:E^p \rightarrow  (E')^p$ is $\pi(H)=H^p$. Since $\varphi$ is not oriented, we know that the kernel of $\Psi$ is a cyclic group. Denote it by $\ker(\Psi)=G'$. The endomorphism ring $\text{End}(E',G')$ is isomorphic to an Eichler order with level $\ell^2 c$ (see \cite[Theorem 3.7]{Arpin2203}).

We have $\Psi(\varphi(\ker\phi))=(\varphi^p \circ \phi \circ \hat{\varphi}) (\varphi(\ker \phi))=(\varphi^p \circ \phi)([\ell] \ker \phi)=\infty$. It follows that $\varphi(\ker \phi) \subseteq \ker \Psi=G'$ is a subgroup of order $c$.

If $\omega \in \text{End}(E',G')$, then $\omega (\varphi(\ker \phi))\subseteq G'$. Moreover, if $\omega (\varphi(\ker \phi))= \infty$, then $\omega (\varphi(\ker \phi)) \subseteq \varphi(\ker \phi)$ obviously. If $\omega (\varphi(\ker \phi))\neq \infty$, then $\omega (\varphi(\ker \phi))$ is a cyclic subgroup of $G'$ with order $c$. Since the cyclic subgroup of $G'$ with order $c$ is unique, we must have $\omega (\varphi(\ker \phi)) \subseteq \varphi(\ker \phi)$.

Moreover, we have
$$\begin{array}{rl}
    ([\ell]^{-1}\circ \hat{\varphi} \circ \omega \circ \varphi)(\ker \phi)
&=([\ell]^{-1}\circ \hat{\varphi})(\omega(\varphi(\ker \phi))) \\
&\subseteq ([\ell]^{-1}\circ\hat{\varphi})(\varphi(\ker \phi)) \\
&\subseteq \ker \phi=G.
  \end{array}
$$
It follows $[\ell]^{-1}\circ \hat{\varphi} \circ \omega \circ \varphi \in \text{End}(E,G)$, so we get an embedding
$$\begin{array}{rcl}
    \iota: \text{End}(E',G')& \hookrightarrow & \text{End}(E,G) \\
    \omega & \mapsto & [\ell]^{-1}\circ \hat{\varphi} \circ \omega \circ \varphi.
  \end{array}
$$

As we know, the endomorphism ring $\text{End}(E,G)$ is isomorphic to an Eichler order $\mathcal{O}_c(q,r)$ or $\mathcal{O}'_c(q,r')$ for some prime $q$ satisfying $(\ref{equation31})$, so $\text{End}(E',G')$ is isomorphic to a suborder of $\text{End}(E,G)$ with discriminant $\ell^4c^2p^2$. We begin to discuss the subgroups of $E[\ell]$.

As we know, there are $\ell+1$ subgroups of $E[\ell]$ with order $\ell$, among which the $1+\left( \frac{-cp}{\ell} \right)$ subgroups are the kernels of the $\mathbb{Z}[\sqrt{-cp}]$-oriented $\ell$-isogenies from $E$. Any other $\ell - \left( \frac{-cp}{\ell} \right) $ subgroups of $E[\ell]$ is the kernel of a non $\mathbb{Z}[\sqrt{-cp}]$-oriented $\ell$-isogeny from $E$.

Let $\beta,\alpha' \in B_{p,\infty}$ satisfy $\beta^2=-q$, $(\alpha')^2=-cp$ and $\beta \alpha'=-\alpha' \beta$. If $\left( \frac{-q}{\ell} \right)=1$, then the equation $x^2+q \equiv 0 \pmod {\ell}$ is solvable. Assume that $-\ell < \pm m < \ell$ are the solutions. There exists a point $P \in E[\ell]$ such that $[\beta] P=mP$, and $[\beta]([\alpha'] P)=-[\alpha']([\beta] P)=-[\alpha'](m P)=-m([\alpha'] P)$. We define that $H_{+}=\langle P \rangle$ and $H_{-}=\langle [\alpha']P \rangle$ are eigenvectors of $\beta$ in $E[\ell]$.




\begin{proposition}\label{p52}
If the binary quadratic form $\rho=(q,4r,\frac{4r^2+4cp}{q})$ in the genus $\Lambda(q)$ can represent two primes $q_1$ and $q_2$ satisfying
\begin{equation}\label{equation41}
\left \{
\begin{array}{ll} \vspace{1ex}
  q \equiv 3 \pmod 8, \quad \left( \frac{p}{q} \right) =-1, \quad  \left( \frac{c}{q} \right) = 1, \quad  \left( \frac{\ell}{q} \right) = 1, & \text{if}  \ c \ \text{is odd}; \\
  q \equiv 7 \pmod 8, \quad \left( \frac{p}{q} \right) =-1, \quad  \left( \frac{\ell}{q} \right) = 1, & \text{if} \ c=2,
\end{array} \right.
\end{equation}
then there exist two elements $\beta_1, \beta_2 \in \mathcal{O}_c(q,r)$ with $\operatorname{nrd}(\beta_1)=q_1$ and $\operatorname{nrd}(\beta_2)=q_2$. Moreover, $\beta_1$ and $\beta_2$ have the same eigenvectors in $E[\ell]$ if and only if $q_1q_2$ can be represented by the binary quadratic form $x^2+4 \ell^2cpy^2$.
\end{proposition}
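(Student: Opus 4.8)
The plan is to realize $\beta_1,\beta_2$ as trace-zero elements anticommuting with $\alpha'$, reduce the eigenvector condition to a commutator computation, and then read off the norm form of the product $\beta_1\beta_2$. First I would produce $\beta_1,\beta_2$. The form $\rho=(q,4r,\frac{4r^2+4cp}{q})$ is the norm form of the rank-two lattice $L=\mathbb{Z}\beta+\mathbb{Z}\frac{2(r+\alpha')\beta}{q}\subseteq\mathcal{O}_c(q,r)$, every element of which is trace zero and anticommutes with $\alpha'$ (it is orthogonal to $\alpha'$ for the reduced-norm pairing, since $\mathrm{trd}(\gamma\bar\alpha')=0$ is equivalent to $\gamma\alpha'+\alpha'\gamma=0$ for trace-zero $\gamma$). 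A representation $\rho(X_i,Y_i)=q_i$ therefore yields $\beta_i=X_i\beta+Y_i\frac{2(r+\alpha')\beta}{q}\in\mathcal{O}_c(q,r)$ with $\mathrm{trd}(\beta_i)=0$, $\mathrm{nrd}(\beta_i)=q_i$ and $\beta_i\alpha'=-\alpha'\beta_i$. The congruence conditions in (\ref{equation41}) force $(\frac{-q_i}{\ell})=1$: since $q_i\equiv 3\pmod 4$, quadratic reciprocity gives $(\frac{\ell}{q_i})(\frac{q_i}{\ell})=(\frac{-1}{\ell})$, and $(\frac{\ell}{q_i})=1$ then yields $(\frac{q_i}{\ell})=(\frac{-1}{\ell})$, whence $(\frac{-q_i}{\ell})=1$ (in particular $\ell\nmid q_i$). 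Thus there is $P_i\in E[\ell]$ with $[\beta_i]P_i=m_iP_i$, $m_i^2\equiv -q_i\pmod\ell$, $m_i\neq 0$, and because $\beta_i$ anticommutes with $\alpha'$ the two distinct eigenlines of $\beta_i$ are $\langle P_i\rangle$ (eigenvalue $m_i$) and $\langle\alpha' P_i\rangle$ (eigenvalue $-m_i$).

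Next I would reduce ``same eigenvectors'' to commutativity on $E[\ell]$. For odd $\ell$ the eigenvalues $\pm m_i$ are distinct, so $\beta_1$ and $\beta_2$ share both eigenlines on $E[\ell]\cong\mathbb{F}_\ell^2$ if and only if they are simultaneously diagonalisable, i.e. if and only if the commutator $[\beta_1,\beta_2]=\beta_1\beta_2-\beta_2\beta_1$ acts as $0$ on $E[\ell]$. The key computation is then to evaluate this commutator. Writing $\beta_i=(a_i+b_i\alpha')\beta$ with $a_i=X_i+\frac{2Y_ir}{q}$, $b_i=\frac{2Y_i}{q}$, and using $\beta\alpha'=-\alpha'\beta$ and $\beta^2=-q$, one checks that $\beta_1\beta_2$ commutes with $\alpha'$, hence lies in $\mathrm{End}(E)\cap\mathbb{Q}(\alpha')$; the explicit multiplication gives $\beta_1\beta_2=X+2Y'\alpha'$ with $X,Y'\in\mathbb{Z}$ and $Y'=Y_2X_1-Y_1X_2$, the coefficient of $\alpha'$ being automatically even (here $X\in\mathbb{Z}$ uses $q\mid r^2+cp$). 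Consequently $[\beta_1,\beta_2]=\beta_1\beta_2-\overline{\beta_1\beta_2}=4Y'\alpha'$ and $q_1q_2=\mathrm{nrd}(\beta_1\beta_2)=X^2+4cp\,Y'^2$.

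Finally I would conclude. Since $\ell$ is odd and $\ell\nmid cp$ (we take $\ell\neq c$, the case $\ell=c$ being the oriented one), $\alpha'$ is invertible on $E[\ell]$, so $4Y'\alpha'$ kills $E[\ell]$ if and only if $\ell\mid Y'$. If $\ell\mid Y'$ then $q_1q_2=X^2+4\ell^2 cp\,(Y'/\ell)^2$ is represented by $x^2+4\ell^2cp\,y^2$. For the converse I would argue that any representation $q_1q_2=x^2+4\ell^2cp\,y^2$ is a representation of $q_1q_2$ by the principal form $x^2+4cp\,z^2$ with $\ell\mid z$; comparing it with $q_1q_2=X^2+4cp\,Y'^2$ and invoking that the principal ideals of norm $q_1q_2$ in $\mathbb{Z}[\sqrt{-4cp}]$ are complex conjugate, so the representation is unique up to sign, forces $\ell\mid Y'$.

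\textbf{The main obstacle is exactly this converse:} turning the mere existence of a representation with $\ell\mid z$ into $\ell$-divisibility of our specific $Y'$. This rests on an ideal-class argument: writing $q_i=\mathrm{nrd}(\mathfrak{q}_i)$, the ideals of norm $q_1q_2$ are $\mathfrak{q}_1^{\pm}\mathfrak{q}_2^{\pm}$, and the principal ones are $\mathfrak{q}_1\bar{\mathfrak{q}}_2$ and its conjugate $\bar{\mathfrak{q}}_1\mathfrak{q}_2$, which yield the representation $(X,\pm Y')$. The delicate point is the case where the class of $\rho$ has order at most $2$, where the additional ideal $\mathfrak{q}_1\mathfrak{q}_2$ is also principal and could a priori give a representation with a different $\ell$-valuation of the second coordinate; ruling this out (or checking it does not affect the $\ell$-divisibility of $Y'$) is the crux of the proof.
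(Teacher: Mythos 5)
Your construction of $\beta_1,\beta_2$ and your eigenvector criterion are correct and agree in substance with the paper: the paper fixes representations $q_i=\rho(x_i,z_i)$, defines $\beta_i$ exactly as you do, and proves that ``same eigenvectors'' is equivalent to $k(x_1,z_1)\equiv(x_2,z_2)\pmod{\ell}$ for some $k\in\mathbb{F}_{\ell}^{*}$, by letting $m_2[\beta_1]-m_1[\beta_2]$ act on $E[\ell]$. That condition is precisely your $\ell\mid Y'$ with $Y'=x_1z_2-x_2z_1$, so your commutator identity $[\beta_1,\beta_2]=4Y'\alpha'$ is a cleaner route to the same intermediate criterion, and your explicit norm identity $q_1q_2=X^2+4cp\,(Y')^2$ gives the forward implication (same eigenvectors $\Rightarrow$ $q_1q_2$ represented by $x^2+4\ell^2cpy^2$) more directly than the paper, which instead rewrites $(x_i,z_i)=L_{h_i}(s_i,t_i)^{\mathrm{t}}$, deduces $h_1=h_2$, and invokes composition of the derived forms $\rho_{h}$ of discriminant $-16\ell^2cp$.

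The gap you flag in the converse is genuine: as written, your proof does not establish that a representation $q_1q_2=u^2+4\ell^2cpv^2$ forces $\ell\mid Y'$. But you should know that the paper's own proof does not close this gap either; it is compressed into the single word ``Equivalently'' at the end. In the paper's language the converse needs: $q_1q_2$ represented by the principal form of discriminant $-16\ell^2cp$ implies $h_1=h_2$. Ideal-theoretically the hypothesis only yields $[\rho_{h_1}]=[\rho_{h_2}]$ or $[\rho_{h_1}]=[\rho_{h_2}]^{-1}$ in the class group of discriminant $-16\ell^2cp$. The first alternative does give $h_1=h_2$, because the distinct primitive forms derived from $\rho$ lie in distinct classes (they biject with the fiber of $\operatorname{Cl}(-16\ell^2cp)\to\operatorname{Cl}(-16cp)$ over $[\rho]$). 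The second alternative, however, is exactly your $\mathfrak{q}_1\mathfrak{q}_2$-principal scenario: since $[\rho_{h_2}]^{-1}$ lies over $[\rho]^{-1}$, it can coincide with some $[\rho_{h_1}]$ only when $[\rho]^2=1$, and in that case it generically does so with $h_1\neq h_2$ (inverse classes share all genus characters, so the conditions in $(\ref{equation41})$ cannot rule this out). So the proposition, and both proofs, need either the extra hypothesis that the class of $\rho$ has order greater than $2$, or a separate argument for the ambiguous case; your identification of this as the crux is accurate, and it is a weakness of the paper's argument rather than an idea you failed to find in it.
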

\begin{proof}
If $q_1=\rho(x_1,z_1)$ and $q_2=\rho(x_2,z_2)$, then $\beta_1=x_1 \beta+z_1 \frac{2(r+\alpha')\beta}{q}$ and $\beta_1=x_2 \beta+z_2 \frac{2(r+\alpha')\beta}{q}$ with $\text{nrd}(\beta_1)=q_1$ and $\text{nrd}(\beta_2)=q_2$. By \cite[Corollary 3.1]{xiao2023oriented}, we have $\mathcal{O}_c(q,r) \cong \mathcal{O}_c(q_1,r_1) \cong \mathcal{O}_c(q_2,r_2)$.

We claim that $\beta_1$ and $\beta_2$ have the same eigenvectors in $E[\ell]$ if and only if there exists a $k \in \mathbb{F}_{\ell}^{*}$ such that $k(x_1,z_1)\equiv (x_2,z_2) \pmod {\ell}$.

If there exists a $k \in \mathbb{F}_{\ell}^{*}$ such that $k(x_1,z_1)\equiv (x_2,z_2) \pmod {\ell}$, then $\beta_1$ and $\beta_2$ have the same eigenvectors in $E[\ell]$ obviously. On the contrary, if $\beta_1$ and $\beta_2$ have the same eigenvectors $H_{+}=\langle P \rangle$ and $H_{-}=\langle P' \rangle$ in $E[\ell]$, then $E[\ell]=\{k_1 P+k_2 P':0\le k_1,k_2 < \ell \}$. Assume $[\beta_i] P= m_i P$ and $[\beta_i] P'=-m_i P'$, we have $(m_2 [\beta_1] -m_1 [\beta_2])P =(m_2 [\beta_1] -m_1 [\beta_2])P'= \infty$. It follows $E[\ell] \subseteq \ker(m_2 [\beta_1] -m_1 [\beta_2])$, which means that $\ell \mid m_2 [\beta_1] -m_1 [\beta_2]$. So we have $k(x_1,z_1)\equiv (x_2,z_2) \pmod {\ell}$ for $k=m_2/m_1$. This proves the claim.
%
%

Since $q_1$ can be represented by a binary quadratic form with discriminant $-16\ell^2cp$ which is derived from $\rho$, we can assume $L_{h_1} (s_1,t_1)^{\textrm{t}}=(x_1,z_1)^{\textrm{t}}$ where $L_{h_1}$ is defined in Proposition \ref{p51}. Similarly, we can assume $L_{h_2} (s_2,t_2)^{\textrm{t}}=(x_2,z_2)^{\textrm{t}}$. Then $k(x_1,z_1)\equiv (x_2,z_2) \pmod {\ell}$ implies $h_1=h_2$. It follows that $q_1$ and $q_2$ can be represented by the same binary quadratic form with discriminant $-16\ell^2cp$ which is derived from $\rho$. Equivalently, $q_1q_2$ can be represented by the binary quadratic form $x^2+4 \ell^2cpy^2$.
\end{proof}
\begin{remark}
If the endomorphism ring $\text{End}(E,G)$ is isomorphic to an Eichler order $\mathcal{O}'_c(q,r')$, then $\beta_1$ and $\beta_2$ have the same eigenvectors in $E[\ell]$ if and only if $4q_1q_2$ can be represented by the binary quadratic form $x^2+ \ell^2cpy^2$.
\end{remark}

Let $\beta_1, \beta_2 \in \mathcal{O}_c(q,r)$ satisfy conditions in Proposition \ref{p52}.
We define $\beta_1 \sim \beta_2$ if and only if they have the same eigenvectors.
Define $\Gamma=\{ \beta \in \mathcal{O}_c(q,r) : \beta^2=-q,\alpha'\beta=-\beta\alpha' \}/ \sim$, where the prime $q$ satisfies $(\ref{equation41})$.

If $\ell \nmid cp$, then there are $\left( \ell - \left( \frac{-cp}{\ell} \right) \right)/2$ primitive binary quadratic forms with discriminant $-16\ell^2cp$ derived from $\rho$, which can represent a prime $q$ satisfying $\left( \frac{-q}{\ell} \right)=1$ by Proposition \ref{Buell1}. It follows that there are $\left( \ell - \left( \frac{-cp}{\ell} \right) \right)/2$ elements in the set $\Gamma$, and each element has two eigenvectors in $E[\ell]$.
If $\ell=c$, then there are $ \ell $ elements in the set $\Gamma$. We have the following proposition.

\begin{proposition}
If $\ell=c$, then $\left( \frac{\ell}{q}\right)=1$ for every prime $q$ satisfying $(\ref{equation31})$. There exists only one oriented $\ell$-isogeny $\phi:E\to E^p$ with kernel $G$. Moreover, the group $G$ is an eigenvector of any $\beta \in \Gamma$.
\end{proposition}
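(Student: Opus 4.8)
The plan is to dispatch the three assertions separately, since the first two reduce to Legendre-symbol bookkeeping while the third carries the actual content.

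For the claim $\left( \frac{\ell}{q}\right)=1$ I would read off the cases of $(\ref{equation31})$ directly. If $c$ is odd, then $\ell=c$ is odd and $(\ref{equation31})$ already demands $\left( \frac{c}{q}\right)=1$, i.e.\ $\left( \frac{\ell}{q}\right)=1$. If $c=2$, then $\ell=2$ and $(\ref{equation31})$ forces $q\equiv 7\pmod 8$; by the second supplement to quadratic reciprocity one has $\left( \frac{2}{q}\right)=1$ exactly when $q\equiv\pm 1\pmod 8$, and $7\equiv -1\pmod 8$, so $\left( \frac{\ell}{q}\right)=1$ again. For the uniqueness of the oriented isogeny I would invoke the count recalled before Proposition \ref{p52}: there are $1+\left( \frac{-cp}{\ell}\right)$ many $\mathbb{Z}[\sqrt{-cp}]$-oriented $\ell$-isogenies from $E$. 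As $\ell=c$ divides $cp$ we get $\left( \frac{-cp}{\ell}\right)=0$, so this number equals $1$, and the unique such isogeny is the distinguished $\phi:E\to E^p$ with kernel $G$.

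The substance is the last statement. First I would locate $G$ inside $E[\ell]=E[c]$ relative to the orientation element $\alpha'$, where $(\alpha')^2=-cp$. Realizing $\alpha'$, up to a unit, as $\hat\phi\circ\pi$ with $\pi$ the purely inseparable $p$-Frobenius, a point $x$ satisfies $[\alpha']x=\infty$ iff $\pi(x)\in\ker\hat\phi=G^p=\pi(G)$, i.e.\ iff $x\in G$; since $\pi$ has separable degree $1$, the separable kernel of $\alpha'$ is exactly $G$, so $\ker\big([\alpha']|_{E[\ell]}\big)=G$ is a one-dimensional $\mathbb{F}_\ell$-subspace. Because $(\alpha')^2=-cp\equiv 0\pmod\ell$, the operator $[\alpha']$ on $E[\ell]$ squares to zero, hence $\operatorname{im}\big([\alpha']|_{E[\ell]}\big)\subseteq\ker\big([\alpha']|_{E[\ell]}\big)=G$, and a dimension count forces $\operatorname{im}\big([\alpha']|_{E[\ell]}\big)=G$.

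With this in hand the invariance of $G$ follows from the defining relation of $\Gamma$. For any $\beta\in\Gamma$ one has $\alpha'\beta=-\beta\alpha'$, so for each $v\in E[\ell]$ the element $[\beta]\big([\alpha']v\big)=-[\alpha']\big([\beta]v\big)$ lies in $\operatorname{im}[\alpha']=G$; hence $[\beta]G\subseteq G$. Since $\text{nrd}(\beta)=q$ is coprime to $\ell$ (indeed $\left( \frac{\ell}{q}\right)=1$ and $q\ne\ell$), the map $[\beta]$ is invertible on $E[\ell]$, so $[\beta]G=G$, and because $G$ is one-dimensional this says precisely that $G$ is an eigenvector of $\beta$. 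I expect the only real obstacle to be the identification $G=\operatorname{im}\big([\alpha']|_{E[\ell]}\big)$, which requires translating the quaternion $\alpha'=\sqrt{-cp}$ into the correct composite of $\hat\phi$ and $\pi$ and carefully tracking separable versus inseparable degrees; the case $c=2$ also merits a direct check, since there the eigenvalues $\pm m$ coincide modulo $\ell$ and the two-eigenvector picture degenerates, but the invariance argument above is insensitive to this and still shows that $G$ is $\beta$-stable.
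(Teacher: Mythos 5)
Your proof is correct, and it actually proves more than the paper does: the paper's own proof addresses only the third assertion (the Legendre-symbol claim and the uniqueness count are left to the reader, and your treatment of both is exactly the intended one, via condition (\ref{equation31}) and the count $1+\left(\frac{-cp}{\ell}\right)=1$). For the main claim, however, your route is genuinely different in structure from the paper's. The paper starts from an eigenvector $H=\langle P\rangle$ of $\beta$ and shows that if $H\neq G$ then the companion eigenvector $\langle [\alpha']P\rangle$ equals $G$: writing $[\alpha']=\pi_p\circ\phi$, it observes that $\ker\phi\subseteq\ker[\alpha']$ and that $\phi\circ[\alpha']=\pi_p\circ\phi^p\circ\phi$ kills $E[\ell]$ (since $\phi^p=\pm\hat\phi$), so $[\alpha']P$ is a nonzero point of $\ker\phi=G$, forcing $\langle[\alpha']P\rangle=G$. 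You instead characterize $G$ intrinsically as $\ker\bigl([\alpha']|_{E[\ell]}\bigr)=\operatorname{im}\bigl([\alpha']|_{E[\ell]}\bigr)$ and deduce $[\beta]G\subseteq G$ directly from the anticommutation $\alpha'\beta=-\beta\alpha'$, so that stability of the one-dimensional subspace $G$ immediately exhibits it as an eigenvector. Both arguments rest on the same geometric input---the factorization of $\alpha'$ through $\phi$ and Frobenius, which is what forces $[\alpha']E[\ell]\subseteq G$---but your organization buys two things: it never presupposes that $\beta$ has an eigenvector in $E[\ell]$ (the paper's proof leans on the earlier two-eigenvector setup attached to $\left(\frac{-q}{\ell}\right)=1$), and it degrades gracefully in the case $\ell=c=2$, where the eigenvalues $\pm m$ coincide modulo $\ell$ and the ``two distinct eigenvectors'' picture is more delicate, as you yourself point out. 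The paper's version, conversely, is shorter given its setup and records the slightly finer fact that when $\langle P\rangle\neq G$ the second eigenvector of $\beta$ is precisely $\langle[\alpha']P\rangle=G$.
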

\begin{proof}
We denote $E[\ell]=\langle P_1, P_2 \rangle$ and $G=\langle P_1 \rangle$. If $H=\langle P \rangle$ is an eigenvector of $\beta\in \Gamma$ with $[\beta] P=mP$, then $\langle [\alpha'] P \rangle$ is also an eigenvector of $\beta$ with $[\beta] ([\alpha'] P)=-m([\alpha'] P)$.

Assume $H=\langle P \rangle \neq G$, and we claim $\langle [\alpha'] P \rangle =G$. Write $P=P_2+kP_1$ with $k \in \{0,\ldots,\ell-1 \}$. Since $[\alpha']=\pi_p \circ \phi :E\to E^p \to E$, we have $\ker \phi \subseteq \ker [\alpha']$. So $[\alpha'] P=[\alpha'](P_2+kP_1)=[\alpha'] P_2$. Moreover, we have $\ker(\phi \circ [\alpha'])=\ker(\phi \circ \pi_p \circ \phi)=\ker(\pi_p \circ \phi^p \circ \phi) \supseteq E [\ell]$ since $\phi^p=\pm \hat{\phi}$. So we have $\phi([\alpha'] P)=(\phi \circ [\alpha']) P=\infty$, which implies $\langle [\alpha'] P\rangle=\langle [\alpha'] P_2 \rangle=\ker(\phi)=G$.
\end{proof}

\begin{proposition}\label{zt10}
If $E$ is a $\mathbb{Z}[\sqrt{-cp}]$-oriented (resp. $\mathbb{Z}[\frac{1+\sqrt{-cp}}{2}]$-oriented) supersingular elliptic curve over $\mathbb{F}_{p^2}$, then there exists a $c$-isogeny $\phi:E \to E^p$ with kernel $G$. Assume that the $\ell$-isogeny $\varphi:E \to E'$ is not $\mathbb{Z}[\sqrt{-cp}]$-oriented. Denote the kernel of
$\Psi: E' \xrightarrow{\hat{\varphi}} E \xrightarrow{\phi} E^p \xrightarrow{\varphi^p} (E')^p$
by $G'$, then $G'$ is a cyclic subgroup of $E'[\ell^2 c]$ with order $\ell^2 c$. Moreover, $\operatorname{End}(E',G')$ is isomorphic to an Eichler order $\mathcal{O}_{c\ell^2}(q,r)=\mathbb{Z} + \mathbb{Z}\frac{1+\beta}{2} + \mathbb{Z} \frac{\ell \alpha'(1+\beta)}{2} + \mathbb{Z}\frac{\ell( r+\alpha')\beta}{q}$ (resp. $\mathcal{O}'_{c\ell^2}(q,r')=\mathbb{Z}+\mathbb{Z}\frac{\ell(1+\alpha')}{2}+\mathbb{Z}\beta
+\mathbb{Z}\frac{\ell(r'+\alpha')\beta}{2q}$)  of level $\ell^2 c$ for some $\beta \in \Gamma$.
\end{proposition}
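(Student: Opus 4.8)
The plan is to prove the two assertions separately: first the cyclicity and order of $G'$ by a direct torsion computation, and then the explicit shape of $\operatorname{End}(E',G')$ by transporting it into $\mathcal{O}_c(q,r)$ through the embedding $\iota$ of the previous paragraph and pinning down the resulting index-$\ell^2$ suborder.

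For the first assertion I would observe that $\Psi=\varphi^p\circ\phi\circ\hat\varphi$ is separable of degree $\deg\hat\varphi\cdot\deg\phi\cdot\deg\varphi^p=\ell^2 c$, so $\#G'=\ell^2 c$ and the $c$-part is automatically cyclic. The only real point is to show that the $\ell$-part is cyclic of order $\ell^2$ rather than all of $E'[\ell]$. Since $\varphi\circ\hat\varphi=[\ell]$ kills $E'[\ell]$ and $\ker\hat\varphi\subseteq E'[\ell]$ has order $\ell$, one gets $\hat\varphi(E'[\ell])=\ker\varphi=H$. Hence $E'[\ell]\subseteq\ker\Psi$ would force $\varphi^p(\phi(H))=\infty$, i.e. $\phi(H)\subseteq\ker\varphi^p=\pi(H)$, which (using $[\alpha']=\pi_p\circ\phi$ and that $\pi_p\circ\pi=[\pm p]$ acts as a scalar on $E[\ell]$) amounts to $H$ being an eigenline of $[\alpha']$ on $E[\ell]$. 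As $\varphi$ is not $\mathbb{Z}[\sqrt{-cp}]$-oriented, $H$ is not such an eigenline, so $E'[\ell]\not\subseteq\ker\Psi$, the $\ell$-part of $G'$ is cyclic of order $\ell^2$, and $G'$ is cyclic of order $\ell^2 c$.

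For the second assertion the starting observation is that $\operatorname{nrd}(\iota(\omega))=\ell^{-2}\operatorname{nrd}(\hat\varphi)\operatorname{nrd}(\omega)\operatorname{nrd}(\varphi)=\operatorname{nrd}(\omega)$, so $\iota$ is an isometry for the reduced norm and $\iota(\operatorname{End}(E',G'))$ is an order of discriminant $(\ell^2 cp)^2$ inside $\mathcal{O}_c(q,r)$, that is, of index $\ell^2$. It then remains to identify this suborder with $\mathcal{O}_{c\ell^2}(q,r)$. Conceptually $\Psi$ exhibits $(E',G')$ as oriented towards Frobenius by $\mathbb{Z}[\sqrt{-\ell^2 cp}]$, and running the order construction of Proposition \ref{zt5} with $cp$, $\alpha'$, $r$ replaced by $\ell^2 cp$, $\alpha''=\ell\alpha'$, $r''=\ell r$ — legitimate because $\left(\tfrac{\ell}{q}\right)=1$ for the $q$ attached to $\beta\in\Gamma$ by (\ref{equation41}), and because $(\ell r)^2+\ell^2 cp=\ell^2(r^2+cp)\equiv 0\pmod q$ — yields exactly $\mathbb{Z}+\mathbb{Z}\tfrac{1+\beta}{2}+\mathbb{Z}\tfrac{\ell\alpha'(1+\beta)}{2}+\mathbb{Z}\tfrac{\ell(r+\alpha')\beta}{q}=\mathcal{O}_{c\ell^2}(q,r)$, of discriminant $(\ell^2 cp)^2$. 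The relevant $\beta\in\Gamma$ is the one whose eigenline in $E[\ell]$ is $H=\ker\varphi$; this is what selects the embedding into $\mathcal{O}_c(q,r)$, since $[\alpha']$ interchanges the two eigenlines $H_{\pm}$ of $\beta$, so the two $\alpha'$-generators fail to stabilise $H$ while $1$ and $\tfrac{1+\beta}{2}$ stabilise it. The $\mathbb{Z}[\tfrac{1+\sqrt{-cp}}{2}]$-oriented case producing $\mathcal{O}'_{c\ell^2}(q,r')$ is entirely parallel.

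I expect the main obstacle to be the exact matching in this last step: showing that $\iota(\operatorname{End}(E',G'))$ is this particular index-$\ell^2$ order, and not merely some index-$\ell^2$ order of the right discriminant. The subtlety is that the condition ``$\theta$ stabilises $H$'' alone only cuts $\mathcal{O}_c(q,r)$ down by index $\ell$ (an Eichler order of level $\ell c$); the extra factor of $\ell$ that distinguishes level $\ell^2 c$ from level $\ell c$ comes from requiring the pushforward $\varphi_*\theta=\ell^{-1}\varphi\theta\hat\varphi$ to stabilise the whole cyclic group $G'_\ell$ of order $\ell^2$, not just its order-$\ell$ subgroup $\ker\hat\varphi$. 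I would therefore track the action of the four basis elements of $\mathcal{O}_c(q,r)$ on $E[\ell^2]$ (equivalently on the rank-two $\mathbb{Z}_\ell$-lattice carrying the commuting actions of $[\alpha']$ and $\beta$), verify that $1$ and $\tfrac{1+\beta}{2}$ push forward into $\operatorname{End}(E',G')$ whereas the two $\alpha'$-generators do so only after multiplication by $\ell$, and then combine the index-$\ell^2$ count with \cite[Theorem 3.7]{Arpin2203} — which guarantees $\operatorname{End}(E',G')$ is Eichler of level $\ell^2 c$ — to conclude that the containment is an equality.
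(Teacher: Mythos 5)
Your proposal is correct and follows essentially the same route as the paper's own proof: identify the $\beta\in\Gamma$ whose eigenline is $\ker\varphi$, show that the generators of the explicit level-$c\ell^2$ order $\mathcal{O}_{c\ell^2}(q,r)$ push forward into $\operatorname{End}(E',G')$, and then conclude equality from the discriminant/index count furnished by Arpin's result that $\operatorname{End}(E',G')$ is an Eichler order of level $\ell^2 c$. The only real difference is that you additionally supply a direct argument for the cyclicity of $G'$ (via $\hat{\varphi}(E'[\ell])=\ker\varphi$ and the non-orientedness of $\varphi$), a point the paper asserts without proof in the discussion preceding the proposition.
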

\begin{proof}
Let $G$ be the kernel of $\phi:E \to E^p$. Denote the kernel of isogeny $\Psi: E' \xrightarrow{\hat{\varphi}} E \xrightarrow{\phi} E^p \xrightarrow{\varphi^p} (E')^p$ by $G'$. Since the $\ell$-isogeny $\varphi:E \to E'$ is not $\mathbb{Z}[\sqrt{-cp}]$-oriented, the kernel $\ker(\varphi) \subseteq E[\ell]$ is an eigenvector of some $\beta \in \Gamma$.

If $\text{nrd}(\beta)=q$ and $m^2+q \equiv 0 \pmod {\ell}$, then $\ker(\varphi)=\{ P \in E[\ell] : [\beta]P=mP\}$. Assume $\text{End}(E,G)\cong \mathcal{O} \cong \mathbb{Z}+\mathbb{Z}\frac{1+\beta}{2}+\mathbb{Z}\frac{\alpha'(1+\beta)}{2} +\mathbb{Z}\frac{(r+\alpha')\beta}{q}$. The kernel ideal of $\varphi$ is $\mathcal{O}(\ell, \beta-m)$.

It is easy to show that $\frac{1+\beta}{2} \in \text{End}(E',G')$. We also have $\frac{\ell \alpha'(1+\beta)}{2}, \frac{\ell (r+\alpha')\beta}{q} \in \text{End}(E',G')$. So the order
$$\mathcal{O}_{c\ell^2}(q,r)=\mathbb{Z} + \mathbb{Z}\frac{1+\beta}{2} + \mathbb{Z} \frac{\ell \alpha'(1+\beta)}{2} + \mathbb{Z}\frac{\ell (r+ \alpha')\beta}{q}$$
is a suborder of $\text{End}(E',G')$. Moreover, the discriminant of order $\text{End}(E',G')$ is $\ell^4 c^2 p^2$, which implies $\text{End}(E',G') \cong \mathcal{O}_{c\ell^2}(q,r)$.

Similar results hold for $\mathbb{Z}[\frac{1+\sqrt{-cp}}{2}]$-oriented supersingular elliptic curves.
\end{proof}

\begin{remark}
The kernel ideal of isogeny $\varphi$ is $I=\mathcal{O}(\ell,\beta-m)$. One can compute the right order of $I$ by other ways, and we aim to represent the right order by a ternary quadratic form in this section.
\end{remark}

\begin{proposition}
Let $E$ be a $\mathbb{Z}[\sqrt{-cp}]$-oriented (or $\mathbb{Z}[\frac{1+\sqrt{-cp}}{2}]$-oriented) supersingular elliptic curve. Assume that $\langle P \rangle$ is an eigenvector of $\beta \in \Gamma$. There exist two $\ell$-isogenies $\varphi:E \to E'$ and $\psi:E \to E''$ with $\ker(\varphi)=\langle P \rangle$ and $\ker(\psi)=\langle \alpha' P \rangle$. Moreover, there exists a $c$-isogeny $\vartheta: (E')^p \to E''$.
\end{proposition}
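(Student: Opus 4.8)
The plan is to dispose of the two $\ell$-isogenies at once and then build $\vartheta$ as a Frobenius pushforward of the connecting isogeny $\phi$. Since $H_{+}=\langle P\rangle$ and $H_{-}=\langle\alpha' P\rangle$ are eigenvectors of $\beta$ in $E[\ell]$, they are cyclic subgroups of order $\ell$, so V\'elu's formulas yield separable $\ell$-isogenies $\varphi:E\to E'=E/\langle P\rangle$ and $\psi:E\to E''=E/\langle\alpha' P\rangle$. The whole content of the statement is therefore the existence of the $c$-isogeny $\vartheta:(E')^p\to E''$.

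First I would apply the $p$-power functor to $\phi:E\to E^p$ to obtain the conjugate $\phi^p:E^p\to E^{p^2}=E$; this is again a $c$-isogeny, since base change by Frobenius preserves degree and $E^{p^2}=E$ as $E$ is defined over $\mathbb{F}_{p^2}$. Writing $\pi:E\to E^p$ for the $p$-power Frobenius, the isogeny $\varphi^p:E^p\to (E')^p$ is an $\ell$-isogeny with kernel $\pi(\langle P\rangle)=\langle\pi(P)\rangle$. Because $c$ and $\ell$ are coprime, $\ker\phi^p\cap\ker\varphi^p=\{\infty\}$, so I can push $\phi^p$ forward along $\varphi^p$: there is a unique $c$-isogeny $\vartheta:(E')^p\to E^p/(\ker\phi^p+\ker\varphi^p)$ completing the commutative square formed by $\phi^p$ and $\varphi^p$. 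Identifying $E=E^p/\ker\phi^p$ through $\phi^p$, the common target is $E/\langle\phi^p(\pi(P))\rangle$, and the pushforward of $\varphi^p$ along $\phi^p$ realizes the quotient $E\to E/\langle\phi^p(\pi(P))\rangle$.

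The decisive step is to recognize this target as $E''$. For this I would invoke the naturality of Frobenius, $\pi_p\circ\phi=\phi^p\circ\pi$, and evaluate at $P$ to get $\phi^p(\pi(P))=\pi_p(\phi(P))=\alpha' P$, where the last equality is the factorization $\alpha'=\pi_p\circ\phi$ recorded in Section 5.2. Hence $\langle\phi^p(\pi(P))\rangle=\langle\alpha' P\rangle$, so $E^p/(\ker\phi^p+\ker\varphi^p)=E/\langle\alpha' P\rangle=E''$ and the quotient $E\to E''$ is exactly $\psi$. Thus $\vartheta:(E')^p\to E''$ is the desired $c$-isogeny, sitting in the square $\psi\circ\phi^p=\vartheta\circ\varphi^p$.

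I expect the only delicate point to be this matching of kernels: one must keep the two degree-$p$ maps $\pi:E\to E^p$ and $\pi_p:E^p\to E$ apart and track the Frobenius twist carefully, so that the eigenvector $\langle\alpha' P\rangle=H_{-}$ furnished by the orientation coincides with the image $\phi^p(\pi(P))$ produced by the pushforward. The remaining facts — that pushforward preserves degree and that the relevant square commutes — are routine.
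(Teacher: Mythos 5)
Your construction is correct in substance, but it follows a genuinely different route from the paper's. The paper never conjugates $\phi$ and $\varphi$ by Frobenius: it forms the composite $\psi\circ[\alpha']:E\to E\to E''$ of degree $\ell cp$, observes that $\psi([\alpha']P)=\infty$ gives $\ker\varphi\subseteq\ker(\psi\circ[\alpha'])$, invokes the factorization property of the separable isogeny $\varphi$ to obtain $\varpi:E'\to E''$ with $\psi\circ[\alpha']=\varpi\circ\varphi$ and $\deg\varpi=cp$, and finally splits $\varpi$ into the Frobenius $\pi_{E'}:E'\to(E')^p$ followed by a separable degree-$c$ isogeny $\vartheta:(E')^p\to E''$. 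You instead build $\vartheta$ directly as the pushforward of $\phi^p$ along $\varphi^p$ and identify its target with $E''$ through the naturality relation $\phi^p\circ\pi=\pi_p\circ\phi=[\alpha']$. The two diagrams are essentially transposes of one another: composing your square $\vartheta\circ\varphi^p=\psi\circ\phi^p$ with $\pi:E\to E^p$ and using naturality for both $\varphi$ and $\phi$ recovers exactly the paper's equation $\vartheta\circ\pi_{E'}\circ\varphi=\psi\circ[\alpha']$, and both arguments pivot on the factorization $[\alpha']=\pi_p\circ\phi$. What the paper's version buys is uniformity: the kernel containment and the separable/inseparable splitting work verbatim for every prime $\ell\neq p$, including $\ell=c$ and the degenerate case $c=1$. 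Your version is more explicit (it exhibits $\vartheta$ as a concrete quotient), at the cost of a kernel-intersection argument.

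That cost shows up as a small gap: the proposition is also meant to cover $\ell=c$ (the remark following it in the paper treats exactly this case), and there your justification ``because $c$ and $\ell$ are coprime, $\ker\phi^p\cap\ker\varphi^p=\{\infty\}$'' breaks down. The conclusion still holds, but for a different reason: both kernels have prime order $c$, so they either coincide or meet trivially, and they cannot coincide, because $\ker\varphi^p=\pi(\langle P\rangle)$ and $\ker\phi^p=\pi(G)$ with $\langle P\rangle\neq G$ --- indeed, if $\langle P\rangle=G$ then $[\alpha']P=\pi_p(\phi(P))=\infty$, contradicting the hypothesis that $\langle[\alpha']P\rangle$ is an order-$\ell$ eigenvector of $\beta$. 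Adding this one sentence closes the gap; the rest of your argument (the degree count for the pushforward and the identification of its target with $E''$, up to isomorphism) is sound.
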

\begin{proof}
If $\langle P \rangle$ is an eigenvector of $\beta$, then $\langle \alpha' P \rangle$ is the other eigenvector of $\beta$. So there exist two $\ell$-isogenies $\varphi:E \to E'$ with $\ker(\varphi)=\langle P \rangle$ and $\psi:E \to E''$ with $\ker(\psi)=\langle [\alpha'] P \rangle$.

Considering the isogeny $\psi \circ [\alpha']: E \to E \to E''$, we have $\psi \circ [\alpha'] (P) =\psi([\alpha'] P)=\infty$, which implies $\langle P \rangle =\ker(\varphi) \subseteq \ker (\psi \circ [\alpha'])$. Since the isogeny $\varphi$ is separable, there exists a unique isogeny $\varpi: E' \to E''$ with degree $cp$. Moreover, the isogeny $\varpi$ can be split into $E' \to (E')^p \to E''$, where the isogeny $\vartheta:(E')^p \to E''$ is of degree $c$.
\end{proof}
\begin{remark}
If $c=1$, then $(E')^p$ is isomorphic to $E''$, which implies that $\text{End}(E') \cong \text{End}(E'')$. If $\ell =c$, then the curve $E'$ (or $E''$) is isomorphic to $E^p$. Without loss of generality, we can assume $E' \cong E^p$, so the isogeny $\vartheta$ is isomorphic to $\psi$.
\end{remark}
The following theorem gives the endomorphism rings of $E'$ and $E''$.
\begin{theorem}
Let $E$ be a $\mathbb{Z}[\sqrt{-cp}]$-oriented (or $\mathbb{Z}[\frac{1+\sqrt{-cp}}{2}]$-oriented) supersingular elliptic curve over $\mathbb{F}_{p^2}$. There exists a $c$-isogeny $\phi:E \to E^p$ with kernel $G$. Assume that $\langle P \rangle \subseteq E[\ell]$ and $\langle \alpha' P \rangle \subseteq E[\ell]$ are the eigenvectors of $\beta \in \Gamma$, and $\langle P \rangle$ (resp. $\langle \alpha' P \rangle$) is the kernel of $\varphi:E \to E'$ (resp. $\psi:E \to E''$). Given the Eichler order in $B_{p, \infty}$ corresponding to $\operatorname{End}(E,G)$, one can compute (at most) two maximal orders in $B_{p, \infty}$ corresponding to $\operatorname{End}(E')$ and $\operatorname{End}(E'')$ by solving two square roots in the ring $\mathbb{Z}/ 2c\ell^2 \mathbb{Z}$.
\end{theorem}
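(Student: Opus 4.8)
The plan is to treat this as the level-$c\ell^2$ analogue of Theorem \ref{zt6}, extracting both maximal orders from the single element $\beta$. First I would apply Proposition \ref{zt10}: since $\langle P\rangle=\ker(\varphi)$ and $\langle \alpha' P\rangle=\ker(\psi)$ are the two eigenvectors of $\beta\in\Gamma$, the rings $\operatorname{End}(E',G')$ and $\operatorname{End}(E'',G'')$ are isomorphic to the explicit Eichler orders $\mathcal{O}_{c\ell^2}(q,r)$ (resp. the primed orders $\mathcal{O}'_{c\ell^2}(q,r')$ in the $\mathbb{Z}[\tfrac{1+\sqrt{-cp}}{2}]$-oriented case), which can be written down directly from the given order corresponding to $\operatorname{End}(E,G)$ and from $\beta$. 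The point I would stress is that each of $E'$ and $E''$ is itself $\mathbb{Z}[\sqrt{-c\ell^2 p}]$-oriented (or $\mathbb{Z}[\tfrac{1+\sqrt{-c\ell^2 p}}{2}]$-oriented) through the degree-$c\ell^2$ cyclic isogeny $\Psi$ and its analogue for $E''$; hence $\operatorname{End}(E')$ and $\operatorname{End}(E'')$ each contain $\mathbb{Z}[\sqrt{-c\ell^2 p}]$, and it is this orientation that will pin down the maximal order.

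Next I would pass from the Eichler orders to binary quadratic forms. As in \cite[Proposition 4.1]{xiao2023oriented} together with Remark \ref{re1}, the level-$c\ell^2$ order $\operatorname{End}(E',G')$ represents a binary form $\rho'=(a,2t,b)$ of discriminant $-16c\ell^2 p$ (or $-c\ell^2 p$), obtained from the form $\rho$ of $\operatorname{End}(E,G)$ by the transformation $L_h$ attached to the eigenvalue $m$ of $\beta$ on $\langle P\rangle$. Because $\langle\alpha' P\rangle$ carries the opposite eigenvalue $-m$ and its kernel ideal $\mathcal{O}(\ell,\beta+m)$ is conjugate to $\mathcal{O}(\ell,\beta-m)$, the form $\rho''$ attached to $E''$ is the opposite form $(a,-2t,b)$. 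Thus $\rho'$ and $\rho''$ share the outer coefficients $a,b$ and differ only in the sign of the cross term; this is the observation that makes the square-root count come out to two rather than four.

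I would then run the construction of Theorem \ref{zt6} with $C=2c\ell^2$, $\Omega=2p$ and $\Delta=1$. Solving $R^2\equiv -a$ and $S^2\equiv -b \pmod{2c\ell^2}$ in $\mathbb{Z}/2c\ell^2\mathbb{Z}$ --- the $c$-components being genuine square roots in $\mathbb{F}_c$ and the $\ell^2$-components lifting from the eigenvalue $m$ by Hensel's lemma --- and fixing $T$ by the cross-term congruence $RS\equiv\pm t$ produces a ternary form $f$ representing $\rho'$ (resp. $\rho''$) with invariants $\Omega,\Delta$. A direct determinant computation gives $\operatorname{disc}(F)=\Delta^2\Omega/2=p$ for the reciprocal $F$, independently of $\ell$, so $F$ corresponds to a maximal order by Brandt--Sohn. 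Since $F$ represents $2c\ell^2$ properly, the orientation argument of Theorem \ref{zt7} (with $c\ell^2$ in place of $c$) identifies $\operatorname{Clf}^0(F)$ as the maximal order admitting $\mathbb{Z}[\sqrt{-c\ell^2 p}]$, namely $\operatorname{End}(E')$; the same square roots with the opposite sign of $t$ yield the ternary form for $E''$, giving at most two maximal orders in all.

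The step I expect to be the main obstacle is that $C=2c\ell^2$ is neither prime nor twice an odd prime, so Proposition \ref{Dick8} no longer forces a single ternary form in the class. I would therefore not rely on Dickson's reduction uniqueness but argue instead from the orientation: the embedded order $\mathbb{Z}[\sqrt{-c\ell^2 p}]$ singles out $\operatorname{End}(E')$ among the maximal orders represented, which is exactly why the statement claims ``at most two'' (the two collapsing to one precisely when $E'\cong E''$, e.g. for $c=1$ or $\ell=c$ as in the preceding remark). A secondary check is the solvability of the congruences (\ref{equation2}) at $\ell$, which follows from $\left(\tfrac{-q}{\ell}\right)=1$ built into condition (\ref{equation41}) defining $\Gamma$, together with the compatibility of $m$ with the lift of $\sqrt{-a}$ modulo $\ell^2$.
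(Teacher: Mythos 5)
Your skeleton matches the paper's proof: pass to the derived binary form $\rho'$ of discriminant $-16c\ell^2p$ represented by the level-$c\ell^2$ Eichler order of Proposition \ref{zt10}, run the Dickson construction with $C=2c\ell^2$, $\Omega=2p$, $\Delta=1$ (solvability at $\ell$ coming from $\left(\frac{-q}{\ell}\right)=1$ in (\ref{equation41}) plus Hensel), and take the reciprocal, which has discriminant $p$ and corresponds to a maximal order by Brandt--Sohn. The gap is in the step where the \emph{two} maximal orders are supposed to appear. You obtain the second order from the opposite form $\rho''=(a,-2t,b)$, ``the same square roots with the opposite sign of $t$''. This cannot work: a ternary form properly represents $(a,2t,b)$ if and only if it properly represents $(a,-2t,b)$, and concretely, if $(R,S,T)$ solves the congruences (\ref{equation2}) for $(a,2t,b)$, then $(R,-S,-T)$ solves them for $(a,-2t,b)$, and the two resulting ternary forms are carried to each other by $X_1\mapsto -X_1$ (a $\det=-1$ equivalence). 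So your two constructions produce equivalent forms, i.e.\ the \emph{same} maximal order. What actually produces two orders in the paper is different: after normalizing $\rho'$ so that $c\mid t'$ and $\ell\nmid 2a'b't'$, the solutions $(R',S')$ of (\ref{equation2}) modulo $2c\ell^2$ fall, up to a \emph{global} sign, into two classes when $c$ is prime --- the two inequivalent ways of gluing by CRT the square root modulo $c$ with the square root modulo $\ell^2$ (flipping only the mod-$c$ component of $S'$ is not a global sign change) --- and these two classes give the two ternary forms corresponding to $\operatorname{End}(E')$ and $\operatorname{End}(E'')$; there is one class when $c=1$ or $\ell=c$, whence ``at most two''. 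Your proposal never isolates this relative-sign choice, which is the heart of the count.

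Your fallback argument is also flawed. You are right that Proposition \ref{Dick8} is unavailable for $C=2c\ell^2$, but orientation cannot replace it: both $E'$ and $E''$ admit cyclic degree-$c\ell^2$ isogenies to their Galois conjugates (the composite $\Psi$ for $E'$, and $E''\xrightarrow{\hat\psi}E\xrightarrow{\phi}E^p\xrightarrow{\psi^p}(E'')^p$ for $E''$, cyclic by the same non-orientedness argument), so $\operatorname{End}(E')$ and $\operatorname{End}(E'')$ \emph{both} contain $\mathbb{Z}[\sqrt{-c\ell^2p}]$, and an embedded copy of that order singles out neither. Indeed $\operatorname{End}(E',G')$ and $\operatorname{End}(E'',G'')$ are isomorphic to the same abstract Eichler order $\mathcal{O}_{c\ell^2}(q,r)$, so no invariant of that order (its binary form, its orientation) can separate the two curves; this is precisely why the statement says ``at most two'' and why the paper's remark after the theorem notes that telling $\operatorname{End}(E')$ and $\operatorname{End}(E'')$ apart requires other means.
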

\begin{proof}

By Proposition \ref{zt10}, the endomorphism ring $\text{End}(E,G) \cong \mathcal{O}_c(q,r)$ (resp. $\text{End}(E',G') \cong \mathcal{O}_{c\ell^2}(q,r)$) can represent a binary quadratic form $\rho$ (resp. $\rho'$) with discriminant $-16cp$ (resp. $-16c\ell^2 p$). Moreover, the form $\rho'=(a',2t',b')$ can be derived from $\rho$, and $\rho'$ can represent a prime $q$ satisfying $(\ref{equation41})$.

We first assume $\ell \nmid cp$. Without loss of generality, we can assume $\ell \nmid a'$. As we know, the form $(a',2t'+2ka',\star)$ with discriminant $-16\ell^2 cp$ is equivalent to $\rho'$ for some $\ell \nmid k$. We can choose a suitable $k \in \mathbb{Z}$ such that $c \mid (2t'+2ka')$ and $\ell \nmid (2t'+2ka')$. So we can assume $\rho'=(a',2t',b')$ with $\ell \nmid 2a'b't'$ and $c \mid t'$ in this case.

As we know, the form $\rho'$ can be represented by a ternary quadratic form if and only if the congruences in (\ref{equation2}) are solvable. Let $\Delta=1$ and $C'=2c\ell^2$. The equations $(R')^2+a' \equiv 0 \pmod {2c\ell^2}$ and $(S')^2+b' \equiv 0 \pmod {2c\ell^2}$ are solvable since $\left(\frac{-a'}{\ell} \right)=\left(\frac{-b'}{\ell} \right)=\left(\frac{-q'}{\ell} \right)=1$.
Let $B'C'-(R')^2=a'$ and $A'C'-(S')^2=b'$. We also have
$$\begin{array}{ll}
  4pc\ell^2&=a'b'-(t')^2 =(B'C'-(R')^2)(A'C'-(S')^2)-(t')^2  \\
   & =A'B'(C')^2-A'C'(R')^2-B'C'(S')^2+(R'S')^2-(t')^2.
\end{array}$$
So $c \ell^2 \mid (R'S')^2-(t')^2$. Since $c \mid t'$, we have $c \mid R'S'$. Moreover, we claim that either $\ell^2 \mid (R'S'-t')$ or $\ell^2 \mid (R'S'-t')$. If $\ell \parallel (R'S'-t')$ and $\ell \parallel (R'S'+t')$, then $\ell \mid t'$, which is a contradiction.
So we have $c\ell^2 \mid (R'S'-t')$ or $c\ell^2 \mid (R'S'-t')$. It follows that $R'$ and $S'$ satisfy the equation $R'S'\pm t' \equiv 0 \pmod {c\ell^2}$. If $c = 1$, then there exists only one choice of $R'$ and $S'$ in $(\mathbb{Z}/2c\ell^2\mathbb{Z})/\{\pm 1\}$. If $c$ is a prime, then there exist two choices of $R'$ and $S'$ in $(\mathbb{Z}/2c\ell^2\mathbb{Z})/\{\pm 1\}$.

As we see, the form $\rho'$ can represented by two (resp. one) reduced ternary quadratic forms if $c$ is a prime (resp. $c=1$). Their reciprocal forms are ternary quadratic forms with discriminant $p$, which correspond to maximal orders in $B_{p,\infty}$.

If $\ell=c$, then we can compute only one ternary quadratic form with discriminant $p$, which corresponds to the endomorphism ring of $E'$.
\end{proof}
\begin{remark}
If $c=1$ or $\ell=c$, then we can compute the endomorphism ring of $E'$ directly. If $\ell \neq c$ is a prime, then we need to distinguish $\text{End}(E')$ and $\text{End}(E'')$ by other ways.
\end{remark}

\begin{example}
As in Example \ref{ex3}, the elliptic curve $E_1:y^{2}=x^{3}+77 x+12$ over $\mathbb{F}_{83}$ is supersingular. Define $\mathbb{F}_{p^2}=\mathbb{F}_p(\alpha)$ with $\alpha^2+1=0$. There exist two $3$-isogenies $\psi_1:E_1 \to E_3$ and $\psi_2:E_1 \to E_4$ with $E_3:y^2=x^3+(15\alpha+52)x+(69\alpha+24)$ and $E_4:y^2=x^3+(68\alpha+52)x+(14\alpha+24)$ which are not $\mathbb{Z}[\sqrt{-cp}]$-oriented. Evidently, $E_4=E_3^p$ and $\text{End}(E_3) \cong \text{End}(E_4)$.

As we know, $\text{End}(E_1)$ can represent a binary quadratic form $\rho=(7,4,48)$. The binary quadratic form $(59,64,68)$ is the only form derived from $\rho$ and $\left( \frac{3}{59} \right)=1$ with discriminant $-16 \times 3^2 \times p$. Moreover, the form $(59,64,68)$ can be represented by a ternary quadratic form $f(x,y,z)=18x^2+6y^2+4z^2-2yz+4xz+14xy,$
which is equivalent to
$$f(x,y,z)=4x^2+6y^2+8z^2+4yz-2xz-2xy.$$
It corresponds to a maximal order $\mathcal{O}=\mathbb{Z}+\mathbb{Z}i+\mathbb{Z}j+\mathbb{Z}k$, where $i,j,k$ satisfy the following equations:
$$ \begin{array}{ll}
  i^2=2i-12 \quad \quad & jk=2\bar{i}=2(2-i) \vspace{0.5ex}  \\
  j^2=-j-8 \quad \quad & ki=3\bar{j}=3(-1-j) \vspace{0.5ex}  \\
  k^2=-k-6 \quad \quad & ij=4\bar{k}=4(-1-k).
 \end{array}
$$
\end{example}

\section{Conclusion}
In this paper, we compute endomorphism rings of supersingular elliptic curves by Brandt--Sohn correspondence and Deuring correspondence. In particular, we give the relations between orientations of supersingular elliptic curves and coefficients of the corresponding ternary quadratic forms.

In general, the supersingular endomorphism ring problem is hard. We aim to find as many as possible supersingular elliptic curves whose endomorphism rings are easy to compute. Assume that an imaginary quadratic order $O$ can be embedded into $\text{End}(E)$.
We find that $\text{End}(E)$ is easy to compute if the absolute value of the discriminant of $O$ is a prime. Moreover, we give a basis of $\text{End}(E)$ if there exists a separable isogeny $\phi$ from $E$ to $E^p$ with prime degree.
It is surprising that the ternary quadratic form $F$ can represent double of $\deg(\phi)$ properly, where $F$ is the corresponding ternary quadratic form of $\text{End}(E)$.

For isogenous supersingular elliptic curves, we also study the relations of their endomorphism rings by ternary quadratic forms. As we know, the action of oriented isogenies can be represented by the composition of binary quadratic forms. Moreover, we study the actions of other isogenies and compute the ternary quadratic forms corresponding to the right orders of their kernel ideals.

We study the endomorphism ring of a supersingular elliptic curve as a maximal order in $B_{p,\infty}$, and give a basis of the maximal order. However, we do not give the explicit endomorphism maps of this basis. This is left as future work.

As we know, the isogeny-based cryptography is based on the hardness of the endomorphism ring problem. We believe that the correspondence between ternary quadratic forms and maximal orders in $B_{p,\infty}$ has a potential application in the isogeny-based cryptography. We will study the security of OSIDH and SQISign by this correspondence in future work.

%



\end{document}